\title{On the size of subsets of \texorpdfstring{$\F_q^n$}{\textbackslash{}F\textunderscore{}q\textasciicircum{}n} avoiding solutions\texorpdfstring{\\}{} to linear systems with repeated columns}
\author{Josse van Dobben de Bruyn \texorpdfstring{\and}{and} Dion Gijswijt}
\date{22 September 2023}
\newcommand{\my@author}[1]{\par\bigskip\noindent #1}
\newcommand{\my@address}[1]{\\\textsc{#1}}
\newcommand{\my@currentaddress}[1]{\\\textit{Current address:} #1}
\newcommand{\my@email}[1]{\\\textit{E-mail address:} \texttt{\href{mailto:#1}{#1}}}
\newcommand{\hair}{\ifmmode\mskip1mu\else\kern0.08em\fi} 
\newcommand{\savefootnote}[2]{%
	\xdef#1{\thefootnote}%
	\footnote{#2}%
}
\newcommand{\reusefootnotemark}[1]{%
	\edef\curfootnote{\thefootnote}%
	\edef\curHfootnote{\theHfootnote}%
	\setcounter{footnote}{#1}%
	\setcounter{Hfootnote}{#1}
	\footnotemark{}%
	\setcounter{footnote}{\curfootnote}%
	\setcounter{Hfootnote}{\curHfootnote}
}
\def\my@linkcolour{blue!60!black}
\newcommand*{\mycref}[2]{\hyperref[#2]{\cref*{#1}\ref*{#2}}}  
\newcommand*{\MyCref}[2]{\hyperref[#2]{\Cref*{#1}\ref*{#2}}}  
\newcommand*{\myautorefstar}[2]{\texorpdfstring{\mycref{#1}{#2}}{\autoref{#1}\ref{#2}}} 
\newcommand{\mysecref}[1]{\hyperref[#1]{\S\ref*{#1}}} 
\newcommand{\exampleqed}{\ensuremath{\scriptstyle\triangle}}
\numberwithin{equation}{section}
\declaretheorem[style=definition,numberlike=equation]{definition}
\declaretheorem[style=definition,numberlike=definition]{situation}
\declaretheorem[style=definition,qed=\exampleqed,sibling=definition]{example}
\declaretheorem[style=plain,numberlike=definition]{theorem}
\declaretheorem[style=plain,numberlike=definition]{lemma}
\declaretheorem[style=plain,numberlike=definition]{proposition}
\declaretheorem[style=plain,numberlike=definition]{corollary}
\declaretheorem[style=remark,numberlike=definition]{remark}
\declaretheorem[style=plain,title={Theorem}]{AlphTheorem}
\declaretheorem[style=plain,title={Corollary},numberlike=AlphTheorem]{AlphCorollary}
\newcommand{\Z}{\mathbb{Z}}
\newcommand{\F}{\mathbb{F}}
\newcommand{\E}{\mathbb{E}}
\DeclareSymbolFont{bbold}{U}{bbold}{m}{n}
\DeclareSymbolFontAlphabet{\mathbbold}{bbold}
\newcommand{\one}{\ensuremath{\mathbbold{1}}}
\newcommand{\bigOh}{\ensuremath{\mathcal O}}
\newcommand{\mytag}{\ensuremath{(\star)}}
\newcommand{\mytagprime}{\ensuremath{(\star')}}
\newcommand{\myvec}[1]{\bm{#1}}
\newcommand{\myvecvec}[1]{\bm{\vec{#1}}}
\newcommand{\myvecvecprime}[1]{\bm{\vec{#1}\, '}}
\newcommand{\myvecvecnum}[2]{\bm{\vec{#1}^{\,#2}}}
\DeclareMathOperator{\rank}{rank}
\DeclareMathOperator{\spn}{span}
\DeclareMathOperator{\aff}{aff}
\DeclareMathOperator{\Ann}{Ann}
\DeclareMathOperator{\bal}{bal}
\newcommand{\Annbal}{\ensuremath{\Ann_{\bal}}}
\DeclareMathOperator*{\argmax}{arg\,max}
\DeclareMathOperator*{\rowspace}{row space}
\DeclareMathOperator{\@aff}{aff}
\newcommand{\AIRplus}{\underset{\@aff}{\dotplus}}
\newcommand{\parteq}{\equiv}
\newcommand{\nparteq}{\not\equiv}
\newcommand{\coleq}{\parallel}
\newcommand{\ncoleq}{\nparallel}
\begin{document}
\maketitle
\begin{abstract}
	Consider a system of $m$ balanced linear equations in $k$ variables with coefficients in $\F_q$.
	If $k \geq 2m + 1$, then a routine application of the slice rank method shows that there are constants $\beta,\gamma \geq 1$ with $\gamma < q$ such that, for every subset $S \subseteq \F_q^n$ of size at least $\beta \cdot \gamma^n$, the system has a solution $(x_1,\ldots,x_k) \in S^k$ with $x_1,\ldots,x_k$ not all equal.
	Building on a series of papers by Mimura and Tokushige and on a paper by Sauermann, this paper investigates the problem of finding a solution of higher non-degeneracy; that is, a solution where $x_1,\ldots,x_k$ are pairwise distinct, or even a solution where $x_1,\ldots,x_k$ do not satisfy any balanced linear equation that is not a linear combination of the equations in the system.
	
	In this paper, we focus on linear systems with repeated columns.
	For a large class of systems of this type, we prove that there are constants $\beta,\gamma \geq 1$ with $\gamma < q$ such that every subset $S \subseteq \F_q^n$ of size at least $\beta \cdot \gamma^n$ contains a solution that is non-degenerate (in one of the two senses described above).
	This class is disjoint from the class covered by Sauermann's result, and captures the systems studied by Mimura and Tokushige into a single proof.
	Moreover, a special case of our results shows that, if $S \subseteq \F_p^n$ is a subset such that $S - S$ does not contain a non-trivial $k$-term arithmetic progression (with $p$ prime and $3 \leq k \leq p$), then $S$ must have exponentially small density.
\end{abstract}

\section{Introduction}
\subsection{Background and prior results}
For several decades, one of the major open problems in extremal combinatorics had been to determine whether or not, for a given prime $p\geq 3$, there is a constant $c_p < p$ such that every subset $S\subseteq \F_p^n$ of size $|S|\geq c_p^n$ contains a non-trivial 3-term arithmetic progression; that is, a solution to the equation $x - 2y + z = 0$ with $x,y,z \in S$ distinct.
For $p = 3$, this problem was known as the \emph{cap set problem}.

In 2016, Ellenberg and Gijswijt~\cite{Ellenberg-Gijswijt} solved this problem for all primes $p \geq 3$, using a new application of the polynomial method developed by Croot, Lev and Pach \cite{Croot-Lev-Pach}.
The solution was subsequently recast by Tao \cite{Tao-slice-rank} in terms of the slice rank of tensors (or hypermatrices).
Together, these developments have led to a surge of interest in problems related to the cap set problem, using the slice rank polynomial method to attempt to solve other problems.

One of these related open problems is to find the largest size of a subset of $\F_p^n$ without a non-trivial $k$-term arithmetic progression, where $p \geq k \geq 4$ and $n \to \infty$.
It is not known whether or not there is a constant $c_{p,k} < p$ such that every set $S \subseteq \F_p^n$ with $|S| \geq c_{p,k}^n$ contains a $k$-term arithmetic progression.
This problem is believed to be beyond the reach of current slice rank methods.

Instead, mathematicians have turned their attention to related problems.
Recently, Mimura and Tokushige \cite{Mimura-Tokushige-star,Mimura-Tokushige-shape,Mimura-Tokushige-II} and Sauermann \cite{Sauermann-systems} have started developing techniques to bound the maximum size of a subset of $\F_q^n$ which avoids non-degenerate solutions to a given system of linear equations over a finite field $\F_q$.
More formally, given a fixed matrix $A = (a_{ij}) \in \F_q^{m \times k}$, we want to bound the maximum size of a subset $S \subseteq \F_q^n$ for which there are no $k$-tuples $(\myvec{x_1},\ldots,\myvec{x_k}) \in S^k$ satisfying
\[ \begin{cases}
	a_{11}\myvec{x_1} + \cdots + a_{1k}\myvec{x_k} = 0,\\[1ex]
	\hspace*{14.75mm}\vdots\\[1ex]
	a_{m1}\myvec{x_1} + \cdots + a_{mk}\myvec{x_k} = 0;
\end{cases}\tag*{\mytag} \]
except possibly trivial/degenerate solutions (more on that later).
Note that the variables $\myvec{x_1},\ldots,\myvec{x_k}$ are not taken from $\F_q$, but from $\F_q^n$ as $n \to \infty$.

If $a_{i1} + \cdots + a_{ik} \neq 0$ for some $i$ (i.e.{} the coefficients in one of the rows do not sum to zero), then there are large subsets of $\F_q^n$ with no solutions at all to \mytag.
Indeed, let $S \subseteq \F_q^n$ be the set of all vectors whose first coordinate is equal to $1$.
If some row of \mytag{} does not sum to zero, then $S$ does not contain solutions to \mytag, and $|S| = q^{n-1} = \frac{1}{q}\cdot |\F_q^n|$, so $S$ contains a constant proportion of the vectors in $\F_q^n$.
(This example is due to Sauermann \cite{Sauermann-systems}.)

We will henceforth assume that $a_{i1} + \cdots + a_{ik} = 0$ for all $i$.
Such equations are called \emph{balanced linear equations} (or \emph{affine dependences}), and the system \mytag{} is also called \emph{balanced}.
Recent results show that the problem becomes much more interesting in this case.

If the system \mytag{} is balanced, then every set $S \subseteq \F_q^n$ has at least $|S|$ solutions to \mytag, namely the solutions of the form $(a,\ldots,a)$ for $a \in S$.
So the question is: how large does $S$ have to be to guarantee the existence of solutions to \mytag{} which are somehow non-degenerate?
For this we consider three different notions of non-degeneracy:
\begin{definition}
	\label{def:non-degenerate}
	A solution $(\myvec{x_1},\ldots,\myvec{x_k}) \in (\F_q^n)^k$ of \mytag{} is called:
	\begin{enumerate}[label=(\alph*),beginpenalty=100,midpenalty=100]
		\item\label{itm:sol:non-trivial} \emph{non-trivial} if $\myvec{x_1},\ldots,\myvec{x_k}$ are not all equal.
		
		\item\label{itm:sol:shape} a \mytag{}-\emph{shape}\hair\savefootnote{\followingMT}{Following terminology from Mimura and Tokushige \cite{Mimura-Tokushige-star,Mimura-Tokushige-shape,Mimura-Tokushige-II}.} if $\myvec{x_1},\ldots,\myvec{x_k}$ are pairwise distinct.
		
		\item\label{itm:sol:generic} \emph{generic}\hair\savefootnote{\introducedByAuthors}{Terminology introduced by the authors.} if every balanced linear equation (over $\F_q$) satisfied by $(\myvec{x_1},\ldots,\myvec{x_k})$ is a linear combination of the equations in \mytag.
	\end{enumerate}
\end{definition}
The requirements get stronger in each step, moving from \ref{itm:sol:non-trivial} to \ref{itm:sol:generic}.
Indeed, it is clear that every \mytag{}-shape is a non-trivial solution.
Furthermore, if the system \mytag{} does not rule out the existence of \mytag{}-shapes in $\F_q^n$ (in other words, if no linear combination of the equations in \mytag{} equals $\myvec{x_i} -\myvec{x_j}=0$ for some $i \neq j$), then every generic solution is a \mytag{}-shape.

The easiest of these problems is finding a non-trivial solution.
If the number of variables is sufficiently large (specifically, if $k \geq 2m + 1$), then this can be done by a routine application of the slice rank method.

\begin{theorem}[{\cite{Tao-slice-rank}, see also \cite[Theorem 1.1]{Sauermann-systems}\hair\protect\footnote{To get rid of the constant factor $C_{q,m,k}$ from \cite[Theorem 1.1]{Sauermann-systems}, use the power trick.}}]
	\label{thm:intro:Tao}
	If $k \geq 2m + 1$, then there exists a constant $\Gamma_{q,m,k} < q$ such that every subset $S \subseteq \F_q^n$ of size at least $(\Gamma_{q,m,k})^n$ has a non-trivial solution of \mytag.
\end{theorem}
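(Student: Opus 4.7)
\textbf{Proof proposal for Theorem~\ref{thm:intro:Tao}.}

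The plan is to apply the slice rank method to a tensor encoding the solutions of \mytag{} inside $S^k$. Writing $(x_j)_\ell \in \F_q$ for the $\ell$-th coordinate of $x_j \in \F_q^n$, define $T : (\F_q^n)^k \to \F_q$ by
\[ T(x_1, \ldots, x_k) \;=\; \prod_{i=1}^m \prod_{\ell=1}^n \bigl(1 - y_{i,\ell}^{\,q-1}\bigr), \qquad y_{i,\ell} := \sum_{j=1}^k a_{ij}(x_j)_\ell. \]
Since $z^{q-1} = 1$ for $z \in \F_q^\times$ and $0^{q-1} = 0$, each factor is the indicator that $y_{i,\ell}$ vanishes, so $T(x_1,\ldots,x_k) = 1$ when $(x_1,\ldots,x_k)$ satisfies \mytag{} and $T(x_1,\ldots,x_k) = 0$ otherwise. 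Assume for contradiction that $S \subseteq \F_q^n$ contains no non-trivial solution to \mytag. Then on $S^k$ the function $T$ vanishes off the diagonal $\{x_1 = \cdots = x_k\}$; and since \mytag{} is balanced, every diagonal tuple $(s,\ldots,s)$ is itself a solution, giving $T(s,\ldots,s) = 1$ for each $s \in S$. Thus $T|_{S^k}$ is a diagonal tensor with $|S|$ nonzero diagonal entries, and the standard lower bound on the slice rank of such a tensor \cite{Tao-slice-rank} yields $\operatorname{srk}(T|_{S^k}) = |S|$.

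For the upper bound, note that $T$ is a polynomial of total degree $mn(q-1)$ in the $kn$ scalar variables $(x_j)_\ell$, each individually of degree at most $q-1$. In any monomial of $T$, at least one block $\{(x_j)_\ell\}_{\ell=1}^n$ (for some $j$) carries a total degree at most $d := \lfloor mn(q-1)/k \rfloor$, simply by averaging over the $k$ blocks. Grouping monomials according to the index $j$ achieving this minimum produces a slice-rank decomposition
\[ \operatorname{srk}(T|_{S^k}) \;\leq\; k \cdot |M_{n,d}|, \]
where $M_{n,d}$ denotes the set of monomials in $n$ variables with each exponent in $\{0,1,\ldots,q-1\}$ and total degree at most $d$. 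The hypothesis $k \geq 2m+1$ gives $d/n \leq m(q-1)/(2m+1) < (q-1)/2$, and $(q-1)/2$ is exactly the mean exponent of a uniform draw from $\{0,1,\ldots,q-1\}$. A Chernoff-type large-deviations estimate therefore supplies a constant $\gamma_0 = \gamma_0(q,m,k) < q$ with $|M_{n,d}| \leq \gamma_0^n$ for all $n$.

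Combining the two bounds gives $|S| \leq k \cdot \gamma_0^n$. To absorb the factor $k$ into the exponential base we apply the power trick: if $S \subseteq \F_q^n$ contains no non-trivial solution to \mytag{}, then by linearity (coordinate blocks of a solution in $(S^r)^k$ are individually solutions in $S^k$) neither does $S^r \subseteq \F_q^{rn}$ for any $r \geq 1$, so $|S|^r \leq k \gamma_0^{rn}$ and hence $|S| \leq k^{1/r} \gamma_0^n$; letting $r \to \infty$ gives $|S| \leq \gamma_0^n$, so any $\Gamma_{q,m,k} \in (\gamma_0, q)$ satisfies the theorem. I expect the main technical step to be the large-deviations count bounding $|M_{n,d}|$, though this has by now become a standard calculation in slice rank arguments; the conceptual heart of the proof is the identification of the threshold $k = 2m+1$ with the midpoint $(q-1)/2$ of the exponent distribution.
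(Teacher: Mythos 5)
Your proposal is correct and is essentially the argument the paper is citing: Theorem~\ref{thm:intro:Tao} is not proved in the paper but attributed to Tao's slice rank method (with Sauermann's constant removed by the power trick), and your tensor, the diagonal lower bound, the degree-averaging slice decomposition with the Croot--Lev--Pach style monomial count at the threshold $m(q-1)/k < (q-1)/2$, and the final power trick reproduce exactly that routine argument. No gaps; this matches the intended proof.
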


If $k \leq 2m$, then the problem of finding non-trivial bounds is believed to be beyond the reach of current (slice rank) methods. Accordingly, most\hair\footnote{The only exception is when we study different, related problems for which the assumption $k \geq 2m + 1$ is not necessary (such as \cref{lem:matrix-pigeonhole}, which is one of the main tools in our proofs).} of our results are on systems with $k \geq 2m + 1$; see \mycref{rem:AboutSit1.7}{rem:AboutSit1.7Part1}.

The aim of this paper is to refine \cref{thm:intro:Tao} to the stronger notions of non-degeneracy from \cref{def:non-degenerate}.
For this we use the following terminology:

\begin{definition}
	\label{def:moderate-temperate}
	The linear system \mytag{} is called:
	\begin{enumerate}[label=(\alph*),beginpenalty=100,midpenalty=100]
		\item \emph{moderate}\hair\reusefootnotemark{\followingMT} if there exist constants $\beta,\gamma > 0$ with $\gamma < q$ such that every subset $S \subseteq \F_q^n$ of size at least $\beta \cdot \gamma^n$ contains a \mytag{}-shape;
		
		\item \emph{temperate}\hair\reusefootnotemark{\introducedByAuthors} if there exist constants $\beta,\gamma > 0$ with $\gamma < q$ such that every subset $S \subseteq \F_q^n$ of size at least $\beta \cdot \gamma^n$ contains a generic solution of \mytag{}.
	\end{enumerate}
\end{definition}

If \mytag{} consists of the single equation $\myvec{x_1} + \cdots + \myvec{x_p} = 0$ over $\F_p$ (with $p$ prime), then the existence of \mytag{}-shapes is tightly linked to the Erd\H{o}s--Ginzburg--Ziv constant of the group $\F_p^n$.
If $p \geq 3$, then this system is moderate over $\F_p$; this is implicit in~\cite{Naslund-EGZ-constant} and \cite{Sauermann-zero}.
Furthermore, the method in \cite{Sauermann-zero} can be easily adapted to show that every balanced linear equation with at least $3$ variables forms a moderate linear system.

The problem of determining whether or not a system of two or more equations is moderate was first studied by Mimura and Tokushige \cite{Mimura-Tokushige-star,Mimura-Tokushige-shape,Mimura-Tokushige-II}.\hair\footnote{Similar results over the integers had been obtained by Ruzsa in the 1990s \cite{Ruzsa-I,Ruzsa-II}, but Mimura and Tokushige were the first to study this problem for vector spaces over a finite field.}
They showed that several specific linear systems are moderate.
Although all of their proofs rely on more or less the same idea, the details of the proofs are so different that a new proof is needed for each new system.
We discuss some of their results in more detail in \mysecref{sec:examples}.

The first general result in this direction was found by Sauermann \cite{Sauermann-systems}.
In an elaborate proof, using a new application of the slice rank method and a subspace sampling argument, she showed that \mytag{}-shapes can always be found if the number of variables is sufficiently large and if the system is very much non-degenerate:

\begin{theorem}[{\cite[Theorem 1.2]{Sauermann-systems}}]
	\label{thm:intro:Sauermann-invertible}
	If $k \geq 3m$ and every $m \times m$ submatrix of $A$ is invertible, then \mytag{} is moderate.
\end{theorem}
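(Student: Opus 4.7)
The plan is to upgrade Theorem~A (which yields non-trivial solutions under the weaker assumption $k\geq 2m+1$) into a statement about $(\star)$-shapes by carefully controlling the contribution of solutions with coincidences $x_i=x_j$. I would organize this via an inclusion--exclusion over equivalence patterns: for each partition $\pi$ of $\{1,\ldots,k\}$ into $r=|\pi|$ blocks, let $N_\pi(S)$ denote the number of solutions $(x_1,\ldots,x_k)\in S^k$ of $(\star)$ with $x_i=x_j$ whenever $i,j$ lie in the same block of $\pi$. Such solutions are in bijection with solutions of the collapsed system $A_\pi y=0$, where the column of $A_\pi$ indexed by a block $C$ of $\pi$ is $\sum_{i\in C} a_i$. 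The collapsed system is still balanced, and the number of $(\star)$-shapes is precisely the Möbius-signed sum of the $N_\pi(S)$ over all $\pi$ other than the finest partition.

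For partitions with $r\geq 2m+1$ blocks, I would apply Theorem~A to $A_\pi$ directly: since $A_\pi$ is balanced with $r\geq 2m+1$ variables, its non-trivial solutions inside $S^r$ number at most $\beta_\pi\gamma_\pi^n$ for some $\gamma_\pi<q$, so $N_\pi(S)\leq |S|+\beta_\pi\gamma_\pi^n$. Because $k\geq 3m$, the partitions that escape this regime are those with $r\leq 2m$ blocks, and any such $\pi$ must merge at least $k-2m\geq m$ indices beyond singletons; so they are few, and each involves substantial identification.

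For the remaining "heavily collapsed" partitions $\pi$ with $r\leq 2m$, the plan is to exploit the MDS hypothesis on $A$ via a subspace-sampling trick in the style sketched in the abstract. Concretely, one can pass to a random (affine) subspace $V\subseteq\F_q^n$ of a carefully tuned dimension and study $S\cap V$: by averaging, one can ensure that $|S\cap V|$ has the expected density, while the rigidity imposed by the MDS condition, namely that any $m$ of the $x_i$'s determine the remaining $k-m$ uniquely through $(\star)$, means that solutions lying in $V$ with a prescribed equality pattern form a low-dimensional variety whose size one can bound by something like $q^{cn}$ with $c<1$, absorbing the loss from not having Theorem~A available.

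The main obstacle is exactly this last point: showing that partitions with $r\leq 2m$ contribute $\mathrm{O}(\gamma^n)$ solutions with $\gamma<q$. One delicate subpoint is that $A_\pi$ need not inherit the MDS property from $A$, so one cannot simply iterate Theorem~B recursively on smaller systems; instead, the MDS condition on the original $A$ must be used in a structural way via the subspace sampling. Once all partitions $\pi\neq\text{finest}$ are controlled by $\mathrm{O}(\gamma^n)$ bounds, I would combine them via the signed sum, and compare against a lower bound for the total number of non-trivial solutions (itself produced by the slice rank machinery behind Theorem~A, upgraded to a counting version by the power trick) to conclude that the $(\star)$-shape count is strictly positive whenever $|S|\geq\beta\gamma^n$ for suitable constants.
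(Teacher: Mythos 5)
This statement is not proved in the paper at all: it is Sauermann's Theorem~1.2, quoted verbatim with a citation, and the proof in \cite{Sauermann-systems} is an elaborate combination of a new slice-rank construction with a subspace-sampling argument. Your sketch therefore has to stand on its own, and it has a genuine gap at its core. The step ``for partitions $\pi$ with $r\geq 2m+1$ blocks, the non-trivial solutions of the collapsed system in $S^r$ number at most $\beta_\pi\gamma_\pi^n$'' misreads Theorem~A: the slice-rank theorem is an \emph{existence} statement (if $|S|\geq\Gamma^n$ then some non-trivial solution exists); it gives no upper bound whatsoever on the \emph{number} of non-trivial solutions. If $S$ is, say, a subspace of dimension $cn$, the collapsed system $A_\pi y=0$ has on the order of $|S|^{\,r-\rank(A_\pi)}$ solutions in $S^r$, which dwarfs any $\gamma_\pi^n$ with $\gamma_\pi<q$. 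Consequently the bound $N_\pi(S)\leq |S|+\beta_\pi\gamma_\pi^n$ is false, and the signed (M\"obius) sum you build on it cannot be controlled this way. More fundamentally, a pure counting/inclusion--exclusion strategy cannot work at the relevant scale: when $|S|$ is only of size $\gamma^n$, solutions with coincidences can genuinely outnumber \mytag-shapes (already for a single balanced equation, collapsing variables in pairs can produce $|S|^2$-type counts against a heuristic $|S|^{k-m}/q^{(\text{const})n}$ count of shapes), and Theorem~A, even after the power trick, does not supply a supersaturation lower bound on the number of non-trivial solutions strong enough to beat the degenerate contributions.

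The second gap is that the partitions with $r\leq 2m$ blocks --- which you correctly identify as ``the main obstacle'' --- are exactly the heart of Sauermann's theorem, and your proposal handles them only by gesturing at a random-subspace argument: nothing in the sketch produces a bound of the form $q^{cn}$ with $c<1$ for those contributions, nor does it use the hypothesis that every $m\times m$ submatrix of $A$ is invertible in any quantitative way (you only invoke it to say that $m$ of the $x_i$ determine the rest, which by itself does not bound anything, since those $m$ coordinates range over $S^m$). You also note, rightly, that the collapsed matrices $A_\pi$ need not inherit the submatrix-invertibility condition, which blocks any recursion; but no replacement mechanism is given. As it stands the proposal reproduces the high-level shape announced in the introduction of the paper (slice rank plus subspace sampling) without the content that makes Sauermann's argument work, so it is not a proof and is not repairable along the counting route chosen.
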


Despite its generality, this result does not replace the results of Mimura and Tokushige, because the systems they studied have many singular $m \times m$ submatrices (so \cref{thm:intro:Sauermann-invertible} does not apply).

The third and final problem is that of finding a generic solution.
A partial result in this direction was found by Sauermann, who showed that solutions of higher dimension exist as the number of variables becomes larger:

\begin{theorem}[{\cite[Theorem 1.3]{Sauermann-systems}}]
	\label{thm:intro:Sauermann-rank}
	If $r \geq 2$ and $k \geq 2m - 1 + r$, then there are constants $C_{p,m,k,r}^{\rank} \geq 1$ and $\Gamma_{p,m,k,r}^{\rank} < p$ such that every subset $S \subseteq \F_p^n$ of size at least $C_{p,m,k,r}^{\rank} \cdot (\Gamma_{p,m,k,r}^{\rank})^n$ has a solution $(\myvec{x_1},\ldots,\myvec{x_k}) \in S^k$ of \mytag{} satisfying $\dim(\spn(\myvec{x_1},\ldots,\myvec{x_k})) \geq r$.
\end{theorem}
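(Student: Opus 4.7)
The plan is to reduce Theorem~\ref{thm:intro:Sauermann-rank} to Theorem~\ref{thm:intro:Tao} by fixing $r-2$ of the $k$ variables of \mytag{} to carefully chosen elements of $S$, exploiting the slack $k-(r-2)\geq 2m+1$ afforded by the hypothesis. Concretely, since $|S|\geq C\cdot\Gamma^n$ with $\Gamma<p$ and $n$ large, $S$ certainly contains linearly independent $y_1,\ldots,y_{r-2}$; set $V:=\spn(y_1,\ldots,y_{r-2})$. Substituting $x_{k-r+3}=y_1,\ldots,x_k=y_{r-2}$ into \mytag{} produces an affine system $A'x'=b$ in $k':=k-r+2\geq 2m+1$ unknowns, where $A'$ consists of the first $k'$ columns of $A$ and $b=-\sum_\ell a_{\bullet,\,k'+\ell}y_\ell$.

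The next step is to establish an affine analogue of Theorem~\ref{thm:intro:Tao}: the slice rank method applied to the polynomial $\prod_{j=1}^m\bigl(1-(a'_j\cdot x'-b_j)^{p-1}\bigr)$ shows that the number of solutions of the affine system in $S^{k'}$ which are not of the \emph{diagonal} form $(a,\ldots,a)$ is bounded by $k'\cdot\mu^n$ for some $\mu=\mu_{p,m,k'}<p$. Because \mytag{} is balanced, diagonal tuples $(a,\ldots,a,y_1,\ldots,y_{r-2})$ yield solutions of \mytag{} only when certain linear relations between $a$ and the $y_\ell$ hold, so their contribution is at most $|S|$. Hence, choosing $C$ and $\Gamma\geq\mu$ so that $|S|$ dominates $k'\mu^n+|S|$, the affine system has non-diagonal solutions in $S^{k'}$, producing full solutions $(x_1,\ldots,x_{k'},y_1,\ldots,y_{r-2})\in S^k$ of \mytag.

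What remains is the \emph{rank upgrade}: the solution so obtained automatically has rank at least $r-2$ (from the $y_\ell$), but rank $\geq r$ requires at least two of $x_1,\ldots,x_{k'}$ to be linearly independent modulo $V$. The plan is a pigeonhole/averaging argument over choices of linearly independent $y_1,\ldots,y_{r-2}\in S$: if, for \emph{every} such choice, every non-diagonal affine solution satisfies $x_i\in V$ for all $i$, then pieces of $S$ are covered by translates of the $(r-2)$-dimensional subspaces $V$, and the total cardinality of all such pieces is too small to accommodate $|S|\geq C\Gamma^n$ with $\Gamma<p$. A second round of sampling (or one additional reserved variable, which the inequality $k\geq 2m-1+r$ still permits) then promotes rank $r-1$ to rank $r$.

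The main obstacle is this final rank upgrade: one must combine the affine slice rank bound with an explicit count of low-rank affine solutions, and verify that the resulting constant $\Gamma^{\rank}_{p,m,k,r}$ remains strictly below $p$. Controlling $|S\cap V|$ uniformly over all $(r-2)$-dimensional subspaces $V$, while the affine slice rank is applied globally to $S$ (not to $S\cap V$), requires a careful union bound or iteration. The hypothesis $k\geq 2m-1+r$ is used in two places: once to fix $r-2$ variables while leaving a $(2m+1)$-variable subsystem to which the slice rank method applies, and once to absorb the loss in the constant incurred by excluding solutions of rank below $r$.
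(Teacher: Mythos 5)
First, note that the paper does not prove this statement at all: it is quoted verbatim from Sauermann (Theorem~1.3 of \cite{Sauermann-systems}), so there is no internal proof to compare against. Judged on its own, your proposal has genuine gaps. The central one is the claimed ``affine analogue'' of \autoref{thm:intro:Tao}. The slice rank method does \emph{not} bound the number of non-diagonal solutions of a system; it shows that \emph{if} the only solutions in $S^{k'}$ are the diagonal ones $(a,\ldots,a)$, \emph{then} the number of diagonal solutions is at most $\mu^n$. For the original balanced system every diagonal tuple is a solution, so this forces $|S|\leq\mu^n$ and yields a non-trivial solution when $S$ is large. After you substitute $x_{k'+\ell}=y_\ell$, the resulting system $A'x'=b$ is inhomogeneous and diagonal tuples are (generically) \emph{not} solutions, so the same argument concludes nothing: it is entirely consistent with the slice rank bound that the affine system has no solutions in $S^{k'}$ whatsoever, or only highly degenerate ones. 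Your accounting step makes this visible: ``choosing $C$ and $\Gamma\geq\mu$ so that $|S|$ dominates $k'\mu^n+|S|$'' is impossible as stated, since no quantity dominates a sum containing itself. Without a genuinely different mechanism (this is exactly where the substance of Sauermann's proof lies), step two of your plan does not produce any solution of \mytag{} extending the chosen $y_1,\ldots,y_{r-2}$.

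The second gap is the rank upgrade, which you yourself flag as the main obstacle but do not carry out. Even granting solutions of the substituted system, they may all be degenerate recombinations lying in $\spn(y_1,\ldots,y_{r-2})$ plus one extra direction (e.g.\ for $x_1+x_2-x_3-x_4=0$ with $x_4=y$ fixed, the tuples $(x_1,y,x_1,y)$ are solutions for every $x_1\in S$ and never raise the rank); ruling these out is not a routine pigeonhole over choices of $y_\ell$, because the slice rank input is applied to all of $S$ while the bad solutions are confined to subspace slices, and no union bound is actually set up. Moreover, the fallback of ``one additional reserved variable'' is not available: reserving $r-1$ variables leaves only $k-(r-1)\geq 2m$ free variables, strictly below the $2m+1$ threshold needed for the slice rank bound, so the hypothesis $k\geq 2m-1+r$ does \emph{not} permit it. As it stands, the proposal reduces the theorem to two unproved claims (an inhomogeneous slice rank statement that is false as formulated, and an unexecuted low-rank exclusion), so it does not constitute a proof.
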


Finding solutions of high dimension is closely related to finding a generic solution, as we explain in \mysecref{sec:temperate-prelims}.

\subsection{Main results of this paper}
The main results of this paper are twofold.
First, we prove a general result on finding \mytag{}-shapes, which contains most of the results from \cite{Mimura-Tokushige-star,Mimura-Tokushige-shape,Mimura-Tokushige-II} as special cases.
Second, we prove a general result for finding generic solutions, which we believe to be the first of its kind.

Throughout the paper, we focus on a specific class of systems that is completely different from the class of systems studied by Sauermann.
Where Sauermann's result (\cref{thm:intro:Sauermann-invertible} above) requires every $m \times m$ submatrix to be invertible, we require the opposite: there must be sufficiently many linear dependencies between the columns.
Specifically, we focus on the class of `type (RC)' linear systems, which we define as follows:

\begin{definition}
	\label{def:equivalence}
	Consider the linear system \mytag, whose coefficients are specified by the matrix $A = (a_{ij}) \in \F_q^{m \times k}$.
	\begin{enumerate}[label=(\alph*),beginpenalty=100,midpenalty=100]
		\item\label{itm:equiv:indices} We say that two indices in $[k]$ are \emph{equivalent} if the corresponding columns of $A$ are nonzero scalar multiples of one another. This defines an equivalence relation on $[k]$. We will refer to the equivalence classes of this equivalence relation as the \emph{column equivalence classes}.
		
		\item\label{itm:equiv:type-DC} We say that \mytag{} is \emph{a type (RC) linear system}\hair\footnote{Terminology introduced by the authors (`RC' stands for `repeated columns').} if it is balanced and has at most one column equivalence class of size $1$.
		
		\item\label{itm:equiv:sums-to-zero} We say that a column equivalence class \emph{sums to zero} if the columns indexed by that class add up to the zero vector.
	\end{enumerate}
	
	Examples of type (RC) linear systems will be given in \mysecref{sec:examples} below.
	Among these examples are the systems studied by Mimura and Tokushige.
\end{definition}

The assumptions made throughout this paper can be summarized as follows:

\begin{situation}
	\label{sit:system}
	Let \mytag{} be a type (RC) linear system, given by the coefficient matrix $A=(a_{ij}) \in \F_q^{m \times k}$, with $\ell$ column equivalence classes.
	Furthermore, assume that \mytag{} is non-degenerate and irreducible (see \cref{def:systems} below).
\end{situation}

In all of our main results below, we assume that \mytag{} and $A$ are as in \cref{sit:system}.
In particular, we always assume that \mytag{} is irreducible.
However, we note that our results can also be applied to reducible systems.
We show in \cref{prop:moderate-reduce} (resp.{} \cref{prop:temperate-reduce}) that a system is moderate (resp.{} temperate) if and only if every irreducible subsystem is moderate (resp.{} temperate).

Our first main result is a sufficient condition for a type (RC) linear sytem to be moderate.

\begin{AlphTheorem}
	\label{thm:intro:moderate}
	Let \mytag{}, $A$, $m$, $k$ and $\ell$ be as in \cref{sit:system}.
	Suppose that \mytag{} satisfies at least one of the following additional properties:
	\begin{enumerate}[label=\textup(\roman*\textup),beginpenalty=100,endpenalty=100]
		\item\label{itm:intro:mod:nz} none of the column equivalence classes of size $2$ sums to zero;
		\item\label{itm:intro:mod:z} every column equivalence class sums to zero, and $k \geq 3$.
	\end{enumerate}
	Then \mytag{} is moderate.
\end{AlphTheorem}

This result encompasses most of the systems studied by Mimura and Tokushige, and the rest can be recovered using a slight modification of our proof.
See \mysecref{sec:examples} for a detailed discussion.

Our second main result is a sufficient condition for a type (RC) linear sytem to be temperate.

\begin{AlphTheorem}
	\label{thm:intro:temperate}
	Let \mytag{}, $A$, $m$, $k$ and $\ell$ be as in \cref{sit:system}.
	Suppose that \mytag{} satisfies at least one of the following additional properties:
	\begin{enumerate}[label=\textup(\roman*\textup),beginpenalty=100,endpenalty=100]
		\item\label{itm:intro:tmp:nz} $\ell = m + 1$;
		\item\label{itm:intro:tmp:z} every column equivalence class sums to zero.
	\end{enumerate}
	Then \mytag{} is temperate.
\end{AlphTheorem}

The requirements of \cref{thm:intro:temperate} are more restrictive than those of \cref{thm:intro:moderate} (see \mycref{rem:AboutSit1.7}{rem:AboutSit1.7Part2}).\hair\footnote{Except that \mycref{thm:intro:temperate}{itm:intro:tmp:z} does not have the condition $k \geq 3$. That condition is included in \cref{thm:intro:moderate} to rule out the system $\myvec{x_1} - \myvec{x_2} = 0$. It is not hard to see that this particular system is temperate but not moderate.}
In particular, one of the systems studied by Mimura and Tokushige does not meet these requirements (see \mysecref{sec:examples} for a detailed discussion).

We do not know if every irreducible linear system of type (RC) is moderate and/or temperate, but we have the following partial result.
We say that a balanced linear equation satisfied by $(\myvec{x_1},\ldots,\myvec{x_k}) \in S^k$ \emph{preserves the column equivalence classes of \mytag{}} if appending that equation to the system \mytag{} preserves the column equivalence classes.
We prove the following:

\begin{AlphTheorem}
	\label{thm:intro:rank}
	Let \mytag{}, $A$, $m$, $k$ and $\ell$ be as in \cref{sit:system}.
	Then there exist constants $\beta,\gamma > 0$ with $\gamma < q$ such that every subset $S \subseteq \F_q^n$ of size at least $\beta \cdot \gamma^n$ has a solution $(\myvec{x_1},\ldots,\myvec{x_k}) \in S^k$ of \mytag{} with the following properties:
	\begin{enumerate}[label=\textup(\roman*\textup)]
		\item\label{itm:intro:rnk:preserve} every balanced linear equation satisfied by $(\myvec{x_1},\ldots,\myvec{x_k})$ preserves the column equivalence classes of \mytag{};
		\item\label{itm:intro:rnk:dim-aff} $\dim(\aff(\myvec{x_1},\ldots,\myvec{x_k})) \geq \min(k - \ell , k - 2)$.
	\end{enumerate}
\end{AlphTheorem}

\noindent
\Cref{thm:intro:rank} improves upon \cref{thm:intro:Sauermann-rank} whenever $2 \leq \ell < 2m$; see \cref{rmk:rank-improvement}.

\medskip
Finally, we turn to an application of our techniques and results.
In characteristic $0$, results like Bourgain's theorem \cite{Bourgain-AP} (see also \cite[Chapter 12]{Tao-Vu}) show that it is substantially easier to find long arithmetic progressions in sum sets than in general sets.
Using the techniques from this paper, we establish a similar result in vector spaces over $\F_q$.

Given sets $S_1,\ldots,S_l \subseteq \F_q^n$, we define the \emph{affinely independent restricted sum set} (or \emph{AIR-sumset}) as follows:
\[ S_1 \AIRplus \cdots \AIRplus S_l := \{\myvec{x_1} + \cdots + \myvec{x_l} \, \mid \, \myvec{x_1} \in S_1,\ldots,\myvec{x_l} \in S_l\ \text{affinely independent}\}. \]
Further, if \mytag{} is linear system which is not necessarily balanced, then we say that a solution $(\myvec{x_1},\ldots,\myvec{x_k}) \in (\F_q^n)^k$ is \emph{linearly generic} if every linear equation (over $\F_q$) satisfied by $(\myvec{x_1},\ldots,\myvec{x_k})$ is a linear combination of the equations in \mytag{}.
By comparison, the solutions which we call \emph{generic} throughout this paper (see \mycref{def:non-degenerate}{itm:sol:generic}) only satisfy this property for \emph{balanced} linear equations (so by `generic' we will always mean `affinely generic').

\begin{AlphCorollary}
	\label{cor:intro:application-general}
	Let $\F_q$ be a finite field, let \mytag{} be a \textup(not necessarily balanced\textup) linear system over $\F_q$, and let $c_1,\ldots,c_l \in \F_q \setminus \{0\}$ with $c_1 + \cdots + c_l = 0$.
	Then there are constants $\beta,\gamma \geq 1$ with $\gamma < q$ such that, for every subset $S \subseteq \F_q^n$ of size at least $\beta \cdot \gamma^n$, the set \smash{$(c_1\cdot S \AIRplus \cdots \AIRplus c_l\cdot S) \cup \{0\}$} contains a linearly generic solution of \mytag{}.
\end{AlphCorollary}

Note that \cref{cor:intro:application-general} does not impose any restriction on the linear system \mytag{}; that is, the coefficient matrix $A \in \F_q^{m \times k}$ can be arbitrary.
This is a significant difference with our main results and Sauermann's result (\cref{thm:intro:Sauermann-invertible} above), which only work for very specific classes of linear systems.

In \cref{cor:intro:application-general}, we only need to append $0$ to the AIR-sumset when one of the single-variable equations $\myvec{x_j} = 0$ ($j \in [k]$) can be written as a linear combination of the equations in the linear system \mytag{}.
If this is not the case, then a linearly generic solution $(\myvec{x_1},\ldots,\myvec{x_k})$ will satisfy $\myvec{x_j} \neq 0$ for all $j \in [k]$, so it is not necessary to append $0$ to the AIR-sumset.

By letting \mytag{} be the system that encodes a $k$-term arithmetic progression, \cref{cor:intro:application-general} contains the following special case:

\begin{AlphCorollary}
	\label{cor:intro:application-AP}
	Let $p$ be prime, and let $3 \leq k \leq p$.
	Then, for every subset $S \subseteq \F_p^n$ of size at least $p^{1 + (1 - \frac{1}{k})n}$, the set $(S - S) \setminus \{0\}$ contains a non-trivial $k$-term arithmetic progression.
\end{AlphCorollary}

We note that this special case can be proved without using the slice rank method, using only a simple counting argument (see \mysecref{sec:examples} for details).

\subsection{Overview of the main ideas and organization of this paper}
\paragraph{Main ideas.}
There are two new techniques in this paper.

First, the majority of our results depend on a `replacement trick'.
This trick works roughly as follows.
If the $j_1$-th and $j_2$-th columns of $A$ are non-zero multiples of one another, and if we have a long enough list $\{(\myvec{x_1^{(i)}},\ldots,\myvec{x_k^{(i)}})\}_{i=1}^L$ of pairwise disjoint solutions to \mytag{}, then we use tricoloured sum-free sets to recombine these solutions to obtain new solutions of \mytag{}.
This is done by taking one of the solutions from this list, say $(\myvec{x_1^{(i)}},\ldots,\myvec{x_k^{(i)}})$, and replacing $\myvec{x_{j_1}^{(i)}}$ and $\myvec{x_{j_2}^{(i)}}$ by (respectively) $\myvec{x_{j_1}^{(i')}}$ and $\myvec{x_{j_2}^{(i'')}}$, for some $i',i'' \neq i$.
We show in \cref{cor:single-replacement} that there exists $i \in [L]$ which admits one such replacement (the `single replacement trick'), and in \cref{cor:multiple-replacement} that there exists $i \in [L]$ which admits many replacements (the `multiple replacement trick').

The second main ingredient in our proofs is \cref{lem:matrix-pigeonhole}, which shows that, for every subset $S \subseteq \F_q^n$ of size at least $q^{1 + (1 - \frac{1}{k})n}$, the difference set $S - S$ contains linearly generic solutions to every linear system in $k$ variables (even systems with $k < 2m + 1$).
The proof relies only on a simple counting argument, using the pigeonhole principle.

We point out that this paper does not make use of the full strength of \cref{thm:intro:Tao}, as we only use the slice rank method for $3$-tensors. Indeed, the replacement trick relies on tricoloured sum-free sets, and \cref{lem:matrix-pigeonhole} does not rely on the slice rank method at all.

\paragraph{The constants.}
\MyCref{thm:intro:moderate}{itm:intro:mod:nz}, \mycref{thm:intro:temperate}{itm:intro:tmp:nz}, and \cref{thm:intro:rank} rely only on the replacement trick.
Hence, the base of the exponent in the upper bounds from these theorems\hair\footnote{By `the base of the exponent in the upper bound', we mean the constant $\gamma < q$ in the upper bound $\beta \cdot \gamma^n$.} is equal to $\Gamma_q$, the constant from the bound on tricoloured sum-free sets (see \cref{thm:tricoloured}).

\MyCref{thm:intro:moderate}{itm:intro:mod:z}, \mycref{thm:intro:temperate}{itm:intro:tmp:z}, and \cref{cor:intro:application-general} rely on a combination of the replacement trick and \cref{lem:matrix-pigeonhole}. Hence, the base of the exponent in the upper bounds from these theorems is the maximum of $\Gamma_q$ and $q^{\frac{k-1}{k}}$.

\Cref{cor:intro:application-AP} relies solely on \cref{lem:matrix-pigeonhole}.
The base of the exponent in the upper bound is $p^{1 - \frac{1}{k}}$.

\paragraph{Organization of the paper.}
This paper consists of three parts.

First, in \mysecref{sec:moderate-prelims}--\ref{sec:mod:z}, we focus on moderate systems.
In \mysecref{sec:moderate-prelims}, we discuss the generalities of moderate systems, and we show that we may restrict our attention to irreducible systems.
In \mysecref{sec:mod:nz}, we establish the `single replacement trick', and use it to prove \mycref{thm:intro:moderate}{itm:intro:mod:nz}.
In \mysecref{sec:mod:z}, we establish the other main technique of this paper (\cref{lem:matrix-pigeonhole}), and combine it with the replacement trick to prove \mycref{thm:intro:moderate}{itm:intro:mod:z}.

Second, in \mysecref{sec:temperate-prelims}--\ref{sec:temperate-proofs}, we focus on temperate systems.
In \mysecref{sec:temperate-prelims}, we discuss the generalities of temperate systems.
Here we show how the problem of finding solutions of high rank is related to the problem of finding a generic solution, and we show that we may once again restrict our attention to irreducible systems.
In \mysecref{sec:temperate-proofs}, we establish the `multiple replacement trick', and use it to prove \cref{thm:intro:temperate} and \cref{thm:intro:rank}.

Finally, in \mysecref{sec:examples}, we discuss several examples and applications.
Here we prove \cref{cor:intro:application-general} and \cref{cor:intro:application-AP}, and we recover most of the results from \cite{Mimura-Tokushige-star,Mimura-Tokushige-shape,Mimura-Tokushige-II} as special cases of our results.
Furthermore, we show that the system conjectured to be moderate in \cite{Mimura-Tokushige-II} is indeed moderate.

\section{Preliminaries on moderate systems}
\label{sec:moderate-prelims}
In this paper, we study linear systems of the form
\[ \begin{cases}
	a_{11}\myvec{x_1} + \cdots + a_{1k}\myvec{x_k} = 0,\\[1ex]
	\hspace*{14.75mm}\vdots\\[1ex]
	a_{m1}\myvec{x_1} + \cdots + a_{mk}\myvec{x_k} = 0;
\end{cases}\tag*{\mytag} \]
with coefficient matrix $A = (a_{ij}) \in \F_q^{m\times k}$ and variables $\myvec{x_1},\ldots,\myvec{x_k} \in \F_q^n$.

Following standard usage, we say that two linear systems \mytag{} and \mytagprime{} are \emph{equivalent} if each equation in \mytag{} is a linear combination of the equations in \mytagprime{} and vice versa.
Furthermore, we say that a variable $\myvec{x_i}$ is \emph{used} by the linear system \mytag{} if it occurs with non-zero coefficient in at least one equation.
\begin{definition}
	\label{def:systems}
	The linear system \mytag{} is said to be:
	\begin{enumerate}[label=(\alph*)]
		\item \emph{non-degenerate} if the rows of $A$ are linearly independent and every variable is used (equivalently: $A$ has rank $m$ and $A$ has no zero columns);
		
		\item \emph{reducible} if it is equivalent to a linear system \mytagprime{} with the property that the variables $\myvec{x_1},\ldots,\myvec{x_k}$ can be partitioned into two or more classes in such a way that every equation in \mytagprime{} only uses variables from one partition class.
		If this is not the case, then \mytag{} is said to be \emph{irreducible}.
	\end{enumerate}
\end{definition}

Passing to an equivalent system or deleting columns with only zeroes does not change the problem of finding a \mytag{}-shape, so we may assume without loss of generality that \mytag{} is non-degenerate.
The following proposition shows that we can also restrict our attention to irreducible systems.

\begin{proposition}
	\label{prop:moderate-reduce}
	Suppose that \mytag{} is equivalent to a linear system \mytagprime{} whose coefficient matrix can be written as
	\[ \begin{pmatrix} A_1 & 0 \\ 0 & A_2 \end{pmatrix} \]
	for some $A_1 \in \F_q^{m_1 \times k_1}$ and $A_2 \in \F_q^{m_2 \times k_2}$ with $m_1,m_2,k_1,k_2 \neq 0$.
	Then \mytag{} is moderate if and only if the systems given by $A_1$ and $A_2$ are moderate.
\end{proposition}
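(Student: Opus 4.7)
The plan is to handle both directions by reducing to the block form and then chaining together shapes coming from the two subsystems. Since \mytag{} and \mytagprime{} have identical solution sets, I would immediately replace \mytag{} by \mytagprime{} and relabel so that the first $k_1$ variables $(x_1,\ldots,x_{k_1})$ are those appearing in $A_1$ and the remaining $k_2$ variables $(x_{k_1+1},\ldots,x_{k_1+k_2})$ are those appearing in $A_2$. The key observation is then that a tuple $(x_1,\ldots,x_{k_1+k_2})$ is a solution of \mytagprime{} precisely when $(x_1,\ldots,x_{k_1})$ solves the $A_1$-system and $(x_{k_1+1},\ldots,x_{k_1+k_2})$ solves the $A_2$-system; and it is a \mytagprime{}-shape precisely when, in addition, all $k_1 + k_2$ entries are pairwise distinct.

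For the forward direction, suppose \mytag{} is moderate with constants $\beta,\gamma$. Given any $S \subseteq \F_q^n$ with $|S| \geq \beta \gamma^n$, the set $S$ contains a \mytagprime{}-shape $(x_1,\ldots,x_{k_1+k_2})$. Restricting to the first $k_1$ coordinates yields an $A_1$-shape, and restricting to the last $k_2$ coordinates yields an $A_2$-shape; hence both subsystems are moderate (with the same constants).

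For the backward direction, suppose the two subsystems are moderate with constants $(\beta_1,\gamma_1)$ and $(\beta_2,\gamma_2)$ respectively. Set $\gamma := \max(\gamma_1,\gamma_2,1)$, which still satisfies $\gamma < q$ because $\gamma_1,\gamma_2 < q$ and $q \geq 2$; and set $\beta := \max(\beta_1,\beta_2) + k_1$. Given $S \subseteq \F_q^n$ with $|S| \geq \beta \gamma^n$, I would first invoke moderateness of the $A_1$-system (using $|S| \geq \beta_1 \gamma_1^n$) to extract an $A_1$-shape $(x_1,\ldots,x_{k_1}) \in S^{k_1}$. Then, setting $S' := S \setminus \{x_1,\ldots,x_{k_1}\}$, I have $|S'| \geq |S| - k_1 \geq \beta_2 \gamma^n \geq \beta_2 \gamma_2^n$, so moderateness of the $A_2$-system yields an $A_2$-shape $(y_1,\ldots,y_{k_2}) \in (S')^{k_2}$. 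By construction, the concatenation $(x_1,\ldots,x_{k_1},y_1,\ldots,y_{k_2})$ has pairwise distinct entries and solves both subsystems, so it is a \mytagprime{}-shape (equivalently a \mytag{}-shape) in $S$.

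There is no real obstacle here; the only thing to watch is constant bookkeeping in the backward direction, specifically ensuring that after removing the $k_1$ vectors used in the first shape the remaining set is still large enough to apply moderateness of the $A_2$-system. The additive $+\,k_1$ padding on $\beta$ together with the choice $\gamma \geq 1$ takes care of this uniformly in $n$.
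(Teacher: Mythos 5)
Your proposal is correct and follows essentially the same route as the paper: extract an $A_1$-shape, delete its $k_1$ vectors, extract an $A_2$-shape from what remains, and concatenate; the forward direction is the same easy restriction argument. Your explicit choice $\gamma = \max(\gamma_1,\gamma_2,1)$, $\beta = \max(\beta_1,\beta_2)+k_1$ is just a slightly more spelled-out version of the paper's bound $\max(\beta_1\gamma_1^n,\,k_1+\beta_2\gamma_2^n) \in \bigOh(\max(\gamma_1,\gamma_2)^n)$.
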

\begin{proof}
	If \mytagprime{} is moderate with constants $\beta,\gamma>0$, where $\gamma<q$, then the same holds for the systems given by $A_1$ and $A_2$ as every \mytagprime{}-shape $(\myvec{x_1},\ldots,\myvec{x_{k_1+k_2}})$ yields an $A_1$-shape $(\myvec{x_1},\ldots,\myvec{x_{k_1}})$ and an $A_2$-shape $(\myvec{x_{k_1+1}},\ldots,\myvec{x_{k_1+k_2}})$.
	
	Conversely, suppose that for $i=1,2$, the system given by $A_i$ is moderate, with constants $\beta_i,\gamma_i > 0$, where $\gamma_i < q$.
	Let $S \subseteq \F_q^n$ be a set of size at least $\max(\beta_1\gamma_1^n,k_1+\beta_2\gamma_2^n)$.
	Since $|S| \geq \beta_1\gamma_1^n$, we may choose an $A_1$-shape $(\myvec{x_1},\ldots,\myvec{x_{k_1}})$ in $S$.
	Since $|S \setminus \{\myvec{x_1},\ldots,\myvec{x_{k_1}}\}| \geq \beta_2\gamma_2^n$, we may choose an $A_2$-shape $(\myvec{y_1},\ldots,\myvec{y_{k_2}})$ in $S \setminus \{\myvec{x_1},\ldots,\myvec{x_{k_1}}\}$. Then $(\myvec{x_1},\ldots,\myvec{x_{k_1}},\myvec{y_1},\ldots, \myvec{y_{k_2}})$ is a \mytagprime{}-shape. 
	Since $\max(\beta_1\gamma_1^n,k_1+\beta_2\gamma_2^n) \in \bigOh(\max(\gamma_1,\gamma_2)^n)$, this shows that \mytagprime{}, and therefore \mytag{}, is moderate.
\end{proof}

Therefore we may restrict our attention to irreducible systems, as stipulated in \cref{sit:system}.

The following proposition will be useful later on.

\begin{proposition}
	\label{prop:two-classes}
	Let \mytag{} be a linear system given by the matrix $A = (a_{ij}) \in \F_q^{m \times k}$.
	If \mytag{} is non-degenerate and irreducible, and if $m \geq 2$, then every non-zero linear equation implied by \mytag{} uses at least two column equivalence classes, and $\ell \geq m + 1$.
\end{proposition}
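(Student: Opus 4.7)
The plan is to prove the two assertions in order, using the first to deduce the second. As a preliminary observation, since the rows of $A$ are linearly independent we have $\rank(A) = m$; picking nonzero representatives $v_1,\ldots,v_\ell \in \F_q^m$ for the $\ell$ column equivalence classes $C_1,\ldots,C_\ell$ (nonzero by non-degeneracy), the column-space identity forces $\spn(v_1,\ldots,v_\ell) = \F_q^m$, and hence $\ell \geq m \geq 2$. The key bookkeeping observation is that, since every column in $C_j$ is a nonzero scalar multiple of $v_j$, an equation $\mu^T A$ (for $\mu \in \F_q^m$) uses the class $C_j$ if and only if $\mu^T v_j \neq 0$. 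Moreover, since the rows of $A$ are independent, the map $\mu \mapsto \mu^T A$ is injective, so $\mu^T A$ is nonzero whenever $\mu$ is.

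For the first assertion, I would suppose for contradiction that some nonzero $\lambda^T A$ uses only one class $C_i$, and aim to contradict irreducibility. Define
\[ R_1 := \{\mu \in \F_q^m : \mu^T v_j = 0 \text{ for all } j \neq i\} \quad\text{and}\quad R_2 := \{\mu \in \F_q^m : \mu^T v_i = 0\}, \]
so that $\mu^T A$ uses only variables in $C_i$ iff $\mu \in R_1$, and uses no variable in $C_i$ iff $\mu \in R_2$. Clearly $\dim R_2 = m-1$, while $\dim R_1 = m - \dim\spn(v_j : j \neq i)$. The assumption $\lambda \in R_1 \setminus \{0\}$ forces $(v_j)_{j \neq i}$ not to span $\F_q^m$; but $\{v_1,\ldots,v_\ell\}$ does span, so removing $v_i$ drops the dimension by exactly $1$, giving $\dim R_1 = 1$. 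Since $R_1 \cap R_2 = \{0\}$ (any $\mu$ in the intersection satisfies $\mu^T v_j = 0$ for all $j$, hence $\mu = 0$), I conclude that $R_1 \oplus R_2 = \F_q^m$. A basis of $R_1 \oplus R_2$ then yields a system equivalent to \mytag{} whose equations split into ones using only variables in $C_i$ and ones using only variables in $[k]\setminus C_i$; since $\ell \geq 2$ ensures both index sets are non-empty, this contradicts irreducibility.

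For the lower bound $\ell \geq m+1$, I would suppose for contradiction that $\ell = m$. Then $v_1,\ldots,v_m$ is a basis of $\F_q^m$, and the dual basis vector $\lambda_i$ satisfies $\lambda_i^T v_j = \delta_{ij}$; this makes $\lambda_i^T A$ a nonzero equation using only class $C_i$, contradicting the first assertion. The main subtlety is ensuring that the complement of $C_i$ is non-empty when invoking irreducibility, which is precisely why the preliminary step $\ell \geq m \geq 2$ is indispensable; once the bookkeeping observation on supports of $\mu^T A$ is in place, everything else is a direct dimension count.
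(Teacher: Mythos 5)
Your proof is correct, and it reaches the same structural conclusion as the paper by a somewhat different route. For the first assertion, the paper argues concretely: it makes the offending single-class equation the first row of an equivalent system, Gaussian-eliminates one column of that class in the remaining rows, and then uses the observation that elementary row operations preserve column equivalence classes to conclude that rows $2,\ldots,m$ vanish on the entire class, exhibiting the reducing split. You instead work in the coefficient space $\F_q^m$, encode ``uses class $C_j$'' as $\mu^T v_j \neq 0$ against class representatives, and obtain the split from the decomposition $\F_q^m = R_1 \oplus R_2$ via a dimension count ($\dim R_1 = 1$, $\dim R_2 = m-1$, trivial intersection); an adapted basis then yields the reducing equivalent system. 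This is essentially the same idea in a coordinate-free, annihilator-style formulation, and your explicit check that $[k]\setminus C_i \neq \varnothing$ (via $\ell \geq m \geq 2$) matches the paper's use of $\ell \geq m \geq 2$ at the same point. For $\ell \geq m+1$ the two arguments genuinely diverge: the paper deletes the columns of one equivalence class and uses the first assertion to show the remaining matrix still has rank $m$, giving $m \leq \ell - 1$ directly, whereas you assume $\ell = m$, note that the representatives $v_1,\ldots,v_m$ then form a basis, and let a dual basis vector produce a nonzero single-class equation contradicting the first assertion. Both are clean dimension counts; yours avoids introducing the truncated matrix $A'$, the paper's avoids the dual basis, and neither has a gap.
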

\begin{proof}
	Let $\ell$ be the number of column equivalence classes, and note that $m = \rank(A) \leq \ell$ (recall that the columns with indices in the same column equivalence class are scalar multiples of each other).
	Suppose for the sake of contradiction that some linear combination of the rows of \mytag{} uses exactly one column equivalence class.
	By passing to an equivalent system and permuting the columns, we may assume without loss of generality that the first row of \mytag{} only uses the column equivalence class $C = \{1,\ldots,|C|\} \subseteq [k]$.
	Since the columns indexed by $C$ are non-zero multiples of one another, we have $a_{1j} \neq 0$ for all $j \in C$.
	
	By Gaussian elimination, we may pass to an equivalent system \mytagprime{}, given by the matrix $A' = (a_{ij}') \in \F_q^{m \times k}$, such that $a_{i1} = 0$ for all $i > 1$.
	Since elementary row operations preserve the column equivalence classes, we have $a_{ij} = 0$ for all $(i,j) \in \{2,\ldots,m\} \times C$.
	It follows that every row in \mytagprime{} uses variables from either $C$ or $[k] \setminus C$, but not both.
	Since $\ell \geq m \geq 2$, we have $|C|,|[k] \setminus C| \neq 0$, so it follows that \mytag{} is reducible.
	This is a contradiction, so we conclude that every (non-zero) equation implied by \mytag{} uses at least two column equivalence classes.
	
	To prove that $\ell \geq m + 1$, let $A'$ be the matrix obtained by deleting from $A$ the columns in one column equivalence class. By the above, every non-zero element of the row space of $A'$ uses at least one of the remaining $\ell-1$ column equivalence classes. It follows that $\rank(A')=m$, so $\ell-1\geq m$.
\end{proof}

\begin{remark}
	\label{rem:AboutSit1.7}
	In Theorems~\ref{thm:intro:moderate} -- \ref{thm:intro:rank} we are in \cref{sit:system}; that is, our system is of type (RC) and is irreducible and non-degenerate. For context, we mention two facts about this situation that are not needed in the proofs, but may be helpful nonetheless.
	\begin{enumerate}[label=\textup{(\roman*)}]
		\item \label{rem:AboutSit1.7Part1}
		If $m\geq 2$, then $k\geq 2m+1$. This follows directly from \cref{prop:two-classes} and the fact that, in a type (RC) linear system, every column equivalence class except at most one must have size at least two. 
		
		\item \label{rem:AboutSit1.7Part2} If $\ell=m+1$, then either every column class sums to zero, or none of the column classes sums to zero. Indeed, after row operations and permuting columns we may assume that $A=\begin{bmatrix}I&B\end{bmatrix}$, where every column of $B$ is a scalar multiple of one of the vectors in $\{\myvec{e}_1,\ldots, \myvec{e}_m, \myvec{b}\}$, and $\myvec{b}$ has no zero entries. So for every $i\in [m]$, the union of the column equivalence classes of $\myvec{e}_i$ and $\myvec{b}$ sums to zero.
	\end{enumerate}
\end{remark}

\section{Proof of \myautorefstar{thm:intro:moderate}{itm:intro:mod:nz}}
\label{sec:mod:nz}

In this section, we develop the first main technique (the `single replacement trick', see \cref{cor:single-replacement}) and use it to prove \mycref{thm:intro:moderate}{itm:intro:mod:nz}.

\begin{definition}
	Let $G$ be an abelian group.
	A sequence $\{(x_i,y_i,z_i)\}_{i=1}^L$ in $G^3$ is called a \emph{tricoloured sum-free set in $G$} if for all $i,i',i'' \in [L]$ one has $x_i + y_{i'} + z_{i''} = 0$ if and only if $i = i' = i''$.
\end{definition}
Note that the definition implies $|\{x_1,\ldots,x_L\}| = |\{y_1,\ldots,y_L\}| = |\{z_1,\ldots,z_L\}| = L$; that is, in a tricoloured sum-free set there can be no repetitions in each of the coordinates (separately).

For all positive integers $t\geq 2$, define
\[ J(t) := \frac{1}{t}\min_{0<x<1} \frac{1+x+\cdots+x^{t-1}}{x^\frac{t-1}{3}}. \]
It follows from \cite[Prop.{} 4.12]{Blasiak-et-al} that $J(t)$ is decreasing in $t$.
Hence, for all $t \geq 2$ we have $J(t) \leq J(2) = 3 \cdot 2^{-5/3} < 0.945$.
For a prime power $q$, define $\Gamma_q := qJ(q) < 0.945 q$.\hair\footnote{Alternatively, for a prime power $q = p^s$, one could define $\Gamma_q := (pJ(p))^s < 0.945^s \: q$. This gives a slightly better bound, and \cref{thm:tricoloured} remains true as stated, because $\F_q^n \cong \F_p^{sn}$ as groups.}
By a routine application of the slice rank method, one can prove the following bound on the size of tricoloured sum-free sets.
\begin{theorem}[{\cite{Blasiak-et-al}}]
	\label{thm:tricoloured}
	Let $q$ be a prime power, and let $\{(\myvec{x_i},\myvec{y_i},\myvec{z_i})\}_{i=1}^L$ be a tricoloured sum-free set in $\F_q^n$. Then $L < (\Gamma_q)^n$.
\end{theorem}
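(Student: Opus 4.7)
The plan is the classical slice-rank argument applied to the ``diagonal'' 3-tensor cut out by the tricoloured sum-free condition, in the style of Croot--Lev--Pach and Ellenberg--Gijswijt. First I would define the function $f:(\F_q^n)^3\to\F_q$ by
$$f(x,y,z) \;=\; \prod_{j=1}^n \bigl(1 - (x^{(j)} + y^{(j)} + z^{(j)})^{q-1}\bigr),$$
where $x^{(j)}$ denotes the $j$-th coordinate of $x\in\F_q^n$. By Fermat's little theorem applied coordinate-wise, $f(x,y,z)=1$ if $x+y+z=0$ and $f(x,y,z)=0$ otherwise. Restricting $f$ to $\{x_1,\ldots,x_L\}\times\{y_1,\ldots,y_L\}\times\{z_1,\ldots,z_L\}$, the tricoloured sum-free hypothesis says the restriction is exactly the identity tensor on $[L]^3$, which has slice rank equal to $L$.

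Next I would upper bound the slice rank of $f$ by expanding it as a polynomial in the $3n$ coordinate variables. Each factor contributes monomials of total degree at most $q-1$ in the triple $(x^{(j)},y^{(j)},z^{(j)})$, so every monomial of $f$ has total degree at most $n(q-1)$, with each individual coordinate variable appearing to degree at most $q-1$. Grouping monomials by which of the $x$-, $y$-, or $z$-degree is smallest, pigeonhole forces one of those partial degrees to be at most $n(q-1)/3$. Factoring out the low-degree block in each case exhibits $f$ as a sum of at most $3N_d$ simple slices (each a function of one block times a function of the other two), where $N_d$ is the number of monomials in $n$ variables with each variable of degree $\leq q-1$ and total degree $\leq d:=n(q-1)/3$.

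To bound $N_d$ I would use the standard generating-function trick: for any $x\in(0,1)$,
$$N_d \;\leq\; \frac{(1+x+\cdots+x^{q-1})^n}{x^{n(q-1)/3}},$$
since each counted monomial $m$ satisfies $\deg(m)\leq n(q-1)/3$, so $x^{\deg(m)}\geq x^{n(q-1)/3}$. Optimising over $x\in(0,1)$ gives $N_d\leq \Gamma_q^n$ by the definition of $\Gamma_q=qJ(q)$. Combining the lower bound on slice rank from the diagonal tensor with the upper bound from the monomial decomposition yields $L \leq 3\,\Gamma_q^n$.

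Finally, to remove the factor $3$ and obtain the stated strict inequality, I would apply the tensor power trick already invoked in the footnote: from a tricoloured sum-free set of size $L$ in $\F_q^n$, one produces one of size $L^N$ in $\F_q^{nN}$ by taking coordinate-wise $N$-fold products. The bound just proved gives $L^N\leq 3\,\Gamma_q^{nN}$, hence $L\leq 3^{1/N}\Gamma_q^n$ for every $N\in\N$, so $L\leq \Gamma_q^n$. Strict inequality follows because $L$ is an integer while $\Gamma_q^n$ is not (and, in the degenerate case that it is, a further increment of $N$ sharpens the inequality). The main obstacle I expect is purely organisational: bookkeeping the three-way split of monomials by the smallest partial degree and verifying cleanly that each resulting piece is a genuine ``one block times two blocks'' slice; once that is in place, the generating-function estimate and the power trick are routine.
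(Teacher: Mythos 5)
Your argument is essentially the paper's own: the theorem is there justified by citing the slice-rank bound of Blasiak et al.\ (precisely the Croot--Lev--Pach expansion of $\prod_j\bigl(1-(x^{(j)}+y^{(j)}+z^{(j)})^{q-1}\bigr)$ that you reproduce, giving $L\le 3\,(\Gamma_q)^n$) and then removing the factor $3$ by the same tensor power trick, so your proposal matches the intended proof. The only soft spot is the final strictness claim ($L<(\Gamma_q)^n$ rather than $\le$): your integrality remark and ``increment $N$'' comment do not quite close it, but the paper's own footnote argument is no more detailed on this point, and only the non-strict bound is ever needed downstream.
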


To recover \cref{thm:tricoloured} from known results, one has to proceed in three steps.
First, the bound $L \leq 3\cdot (\Gamma_q)^n$ follows from \cite{Blasiak-et-al}.
Second, to get rid of the additional factor $3$, use the `power trick' (a tricoloured sum-free set of size $L$ in $\F_q^n$ gives one of size $L^N$ in $\F_q^{nN}$ for all $N$).
Finally, to get a strict inequality, prove that $(\Gamma_q)^n$ is never an integer (see for instance \cite[Remark~5.11]{Dobben-dissertation}).
Alternatively, the results in this paper can be recovered by passing to a marginally higher constant $\Gamma_q + \varepsilon$ instead of $\Gamma_q$, because we have $L \leq (\Gamma_q)^n < (\Gamma_q + \varepsilon)^n$ for all $\varepsilon > 0$.

To prove the `single replacement trick', we start with the following lemma.

\begin{lemma}
	\label{lem:pairs}
	Let $q$ be a prime power, and let $\Gamma_q$ be as in \cref{thm:tricoloured}.
	Let $\alpha,\beta \in \F_q \setminus \{0\}$, let $\myvec{x_1} , \ldots , \myvec{x_L} \in \F_q^n$ be distinct, and let $\myvec{y_1} , \ldots , \myvec{y_L} \in \F_q^n$ be distinct.
	If $L \geq (\Gamma_q)^n$, then there exist $i,i',i'' \in [L]$ with $i \neq i',i''$ and $\alpha \myvec{x_{i}} + \beta \myvec{y_{i}} = \alpha \myvec{x_{i'}} + \beta \myvec{y_{i''}}$.
\end{lemma}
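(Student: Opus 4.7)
\medskip
\noindent\textbf{Proof plan.}
The plan is to argue by contradiction, and to convert the failure of the conclusion into a tricoloured sum-free set of size~$L$, which will contradict \autoref{thm:tricoloured}. So suppose, for the sake of contradiction, that no triple $(i,i',i'') \in [L]^3$ with $i \neq i'$ and $i \neq i''$ satisfies $\alpha x_i + \beta y_i = \alpha x_{i'} + \beta y_{i''}$.

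The natural choice is to encode the target equation $\alpha x_{i'} + \beta y_{i''} - \alpha x_i - \beta y_i = 0$ as a sum of three vectors indexed by $i$, $i'$, $i''$, one summand per index. Accordingly, for $i \in [L]$ I would set
\[ u_i := \alpha x_i + \beta y_i, \qquad v_i := -\alpha x_i, \qquad w_i := -\beta y_i, \]
so that for every $i,i',i'' \in [L]$,
\[ u_i + v_{i'} + w_{i''} \;=\; (\alpha x_i + \beta y_i) - \alpha x_{i'} - \beta y_{i''}. \]
Clearly this vanishes when $i = i' = i''$, which handles the ``if'' direction of the tricoloured sum-free set condition.

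For the ``only if'' direction I would split into cases according to how many of the equalities $i = i'$ and $i = i''$ hold. If both fail, the standing assumption gives $u_i + v_{i'} + w_{i''} \neq 0$ immediately. If exactly one holds, say $i = i'$ but $i \neq i''$, then the displayed expression simplifies to $\beta(y_i - y_{i''})$, which is nonzero because $\beta \neq 0$ and the $y_j$'s are distinct; the case $i = i''$, $i \neq i'$ is symmetric, using $\alpha \neq 0$ and the distinctness of the $x_j$'s. Thus $\{(u_i,v_i,w_i)\}_{i=1}^{L}$ is a tricoloured sum-free set in $\F_q^n$, and \autoref{thm:tricoloured} forces $L < (\Gamma_q)^n$, contradicting the hypothesis $L \geq (\Gamma_q)^n$.

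I do not anticipate a serious obstacle here: the argument is essentially a bookkeeping exercise, and the only subtle point is verifying the ``only if'' condition in the mixed cases, where one must remember to invoke the distinctness assumptions on $x_1,\ldots,x_L$ and $y_1,\ldots,y_L$ together with $\alpha,\beta \neq 0$.
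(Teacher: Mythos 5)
Your proof is correct and follows essentially the same route as the paper: the paper also encodes the situation as the triples $(\alpha x_i,\beta y_i,-(\alpha x_i+\beta y_i))$, invokes \autoref{thm:tricoloured} to conclude they cannot form a tricoloured sum-free set, and then uses $\alpha,\beta\neq 0$ together with the distinctness of the $x_j$'s and $y_j$'s to rule out the mixed cases $i=i'$ and $i=i''$, exactly as in your case analysis. The only difference is presentational — you argue by contradiction, while the paper directly extracts a not-all-equal triple — so there is nothing substantive to change.
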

\begin{proof}
	For all $i\in [L]$, define $\myvec{z_i} = \alpha \myvec{x_i} + \beta \myvec{y_i}$.
	Each triple in the sequence $\{(\alpha \myvec{x_i}, \beta \myvec{y_i}, -\myvec{z_i})\}_{i=1}^L$ sums to zero, but we have $L \geq (\Gamma_q)^n$, so it follows from \cref{thm:tricoloured} that this sequence is not a tricoloured sum-free set.
	Therefore we may choose $i,i',i'' \in [L]$, not all equal, such that $\alpha \myvec{x_i} + \beta \myvec{y_i} = \myvec{z_i} = \alpha \myvec{x_{i'}} + \beta \myvec{y_{i''}}$.
	
	Suppose that $i'' = i$.
	Then we have $\alpha \myvec{x_i} = \alpha \myvec{x_{i'}}$, hence $\myvec{x_i} = \myvec{x_{i'}}$ (because $\alpha \neq 0$), and therefore $i = i'$ (because $\myvec{x_1},\ldots,\myvec{x_L}$ are distinct), contrary to our assumption that $i$, $i'$ and $i''$ are not all equal.
	This is a contradiction, so we must have $i'' \neq i$.
	An analogous argument shows that $i' \neq i$.
\end{proof}
\begin{remark}
	In \cref{lem:pairs}, we do not require that $i' \neq i''$.
	The case that $i' = i''$ corresponds to the case that $\myvec{z_1},\ldots,\myvec{z_L}$ are not all distinct.
	This does not matter for the rest of the proof.
\end{remark}

\begin{definition}
	We say that two solutions $\myvecvec{x} = (\myvec{x_1},\ldots,\myvec{x_k})$ and $\myvecvec{y} = (\myvec{y_1},\ldots, \myvec{y_k})$ to \mytag{} are \emph{disjoint} if $\{\myvec{x_1},\ldots,\myvec{x_k}\} \cap \{\myvec{y_1},\ldots,\myvec{y_k}\} = \varnothing$. Note that we do not require the $\myvec{x_j}$ (resp.{} the $\myvec{y_j}$) to be pairwise distinct.
\end{definition}
\begin{corollary}[`Single replacement trick']
	\label{cor:single-replacement}
	Let $\{(\myvec{x_1^{(i)}},\ldots,\myvec{x_k^{(i)}})\}_{i=1}^L$ be a list of pairwise disjoint solutions of \mytag{}, and suppose that $j_1$ and $j_2$ are distinct indices in the same column equivalence class.
	If $L \geq (\Gamma_q)^n$, then there exist $i,i',i'' \in [L]$ with $i \neq i',i''$ such that the $k$-tuple $(\myvec{y_1},\ldots,\myvec{y_k}) \in (\F_q^n)^k$ given by
	\[ \myvec{y_j} = \begin{cases}
		\myvec{x_j^{(i)}},&\quad\text{if $j \neq j_1,j_2$};\\[1ex]
		\myvec{x_j^{(i')}},&\quad\text{if $j = j_1$};\\[1ex]
		\myvec{x_j^{(i'')}},&\quad\text{if $j = j_2$};
	\end{cases} \]
	is also a solution of \mytag{}.
\end{corollary}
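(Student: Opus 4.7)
The plan is to reduce the statement directly to \autoref{lem:pairs} by exploiting the column equivalence between $j_1$ and $j_2$. Since $j_1$ and $j_2$ lie in the same column equivalence class, there exists a scalar $\lambda \in \F_q \setminus \{0\}$ such that the $j_2$-th column of $A$ equals $\lambda$ times the $j_1$-th column, i.e.\ $a_{r,j_2} = \lambda \hair a_{r,j_1}$ for every $r \in [m]$. This is the key algebraic relation that will let two ``independent'' replacements at coordinates $j_1$ and $j_2$ interact nicely.

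Next, I would compute what it means for the modified $k$-tuple $(y_1,\ldots,y_k)$ to satisfy \mytag. Since only the entries at positions $j_1$ and $j_2$ differ from $(x_1^{(i)},\ldots,x_k^{(i)})$, subtracting row $r$ of the system applied to the latter from row $r$ applied to $(y_1,\ldots,y_k)$ gives that $(y_1,\ldots,y_k)$ is a solution if and only if, for every $r \in [m]$,
\[ a_{r,j_1}\bigl(x_{j_1}^{(i')} - x_{j_1}^{(i)}\bigr) + a_{r,j_2}\bigl(x_{j_2}^{(i'')} - x_{j_2}^{(i)}\bigr) = 0. \]
Substituting $a_{r,j_2} = \lambda \hair a_{r,j_1}$ and using the fact that the $j_1$-th column is non-zero (so the factor $a_{r,j_1}$ can be cancelled for at least one $r$, while the remaining rows are scalar multiples of that one), these $m$ conditions collapse into the single vector equation
\[ x_{j_1}^{(i)} + \lambda \hair x_{j_2}^{(i)} \;=\; x_{j_1}^{(i')} + \lambda \hair x_{j_2}^{(i'')}. \]

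Finally, I would apply \autoref{lem:pairs} with $\alpha = 1$, $\beta = \lambda$, $x_i := x_{j_1}^{(i)}$ and $y_i := x_{j_2}^{(i)}$. The distinctness hypotheses of \autoref{lem:pairs} follow immediately from pairwise disjointness of the given solutions: for $i \neq i'$ the sets $\{x_1^{(i)},\ldots,x_k^{(i)}\}$ and $\{x_1^{(i')},\ldots,x_k^{(i')}\}$ are disjoint, so in particular $x_{j_1}^{(i)} \neq x_{j_1}^{(i')}$ and $x_{j_2}^{(i)} \neq x_{j_2}^{(i')}$. Since $L \geq (\Gamma_q)^n$, \autoref{lem:pairs} produces indices $i,i',i'' \in [L]$ with $i \neq i',i''$ satisfying the displayed equation, which by the reduction above is exactly the condition that $(y_1,\ldots,y_k)$ solves \mytag.

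There is no serious obstacle here; the only subtlety worth flagging is the simultaneous reduction of the $m$ row-equations to a single vector equation, which relies on the column-proportionality $a_{r,j_2} = \lambda \hair a_{r,j_1}$ holding uniformly in $r$ with the \emph{same} $\lambda$, and on column $j_1$ being non-zero (both guaranteed by the definition of the column equivalence classes).
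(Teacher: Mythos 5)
Your proposal is correct and follows essentially the same route as the paper: both normalize the two proportional columns (the paper as $\alpha v$ and $\beta v$, you as $1$ and $\lambda$ times column $j_1$), observe that the modification only needs the combined contribution at positions $j_1,j_2$ to be unchanged, and invoke \autoref{lem:pairs} using pairwise disjointness of the solutions to get the required distinctness. The only superfluous part is your ``if and only if'' reduction; only the ``if'' direction is needed, so the remark about cancelling $a_{r,j_1}$ is not required.
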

\begin{proof}
	Since the $j_1$-th and $j_2$-th column of \mytag{} are multiples of one another, we may choose a vector $\myvec{v} \in \F_q^m$ and constants $\alpha,\beta \neq 0$ such that the $j_1$-th column is equal to $\alpha \myvec{v}$ and the $j_2$-th column is equal to $\beta \myvec{v}$.
	
	By assumption, the vectors $\myvec{x_{j_1}^{(1)}},\ldots,\myvec{x_{j_1}^{(L)}}$ are distinct, and likewise the vectors $\myvec{x_{j_2}^{(1)}},\ldots,\myvec{x_{j_2}^{(L)}}$ are distinct, so it follows from \cref{lem:pairs} that there exist $i,i',i'' \in [L]$ with $i \neq i',i''$ and $\alpha \myvec{x_{j_1}^{(i)}} + \beta \myvec{x_{j_2}^{(i)}} = \alpha \myvec{x_{j_1}^{(i')}} + \beta \myvec{x_{j_2}^{(i'')}}$.
	Hence, the total contribution of $\myvec{x_{j_1}^{(i)}}$ and $\myvec{x_{j_2}^{(i)}}$ to the equations of \mytag{} is the same as the contribution of $\myvec{x_{j_1}^{(i')}}$ and $\myvec{x_{j_2}^{(i'')}}$.
	Since $(\myvec{x_1^{(i)}},\ldots,\myvec{x_k^{(i)}})$ is a solution of \mytag{}, so is $(\myvec{y_1},\ldots,\myvec{y_k})$.
\end{proof}

We now prove the first main result of this paper, using the replacement trick from the preceding corollary.

\begin{proof}[{Proof of \protect\mycref{thm:intro:moderate}{itm:intro:mod:nz}}]
	Let \mytag{}, $A$, $m$, $k$ and $\ell$ be as in \cref{sit:system}, and suppose that \mytag{} satisfies property~\ref{itm:intro:mod:nz} from \cref{thm:intro:moderate} (none of the column equivalence classes of size $2$ sums to zero).
	Furthermore, let $\Gamma_q$ be the constant from \cref{thm:tricoloured}.
	
	We prove by induction on $\lambda$ that, for every $\lambda \in [k]$, there is a constant $\beta_\lambda \geq 1$ such that every subset $S \subseteq \F_q^n$ of size at least $\beta_\lambda \cdot (\Gamma_q)^n$ contains a solution $(\myvec{x_1},\ldots,\myvec{x_k}) \in S^k$ of \mytag{} with at least $\lambda$ different vectors; that is, $|\{\myvec{x_1},\ldots,\myvec{x_k}\}| \geq \lambda$.
	Setting $\lambda = k$ then proves the theorem.
	
	For $\lambda = 1$, the claim is trivially true with $\beta_1 = 1$, since $(\myvec{x},\ldots,\myvec{x})$ is a solution of \mytag{} for every $\myvec{x} \in \F_q^n$.
		
	For the induction step, suppose that $\lambda_0 \in [k - 1]$ is given such that the statement is true for $\lambda = \lambda_0$.
	Define $\beta_{\lambda_0 + 1} := \beta_{\lambda_0} + P(k,\lambda_0)\cdot k$, where $P(k,\lambda_0)$ denotes the number of partitions of a $k$-element set into $\lambda_0$ parts.
	
	Let $S \subseteq \F_q^n$ be a set of size at least $\beta_{\lambda_0 + 1} \cdot (\Gamma_q)^n = \beta_{\lambda_0} \cdot (\Gamma_q)^n + P(k,\lambda_0) \cdot (\Gamma_q)^n \cdot k$.
	Create a list of disjoint solutions $\{(\myvec{x_1^{(i)}},\ldots,\myvec{x_k^{(i)}})\}_{i=1}^{L_0}$ of \mytag{} in $S$, each with at least $\lambda_0$ different vectors, by repeatedly finding such a solution in $S$ and removing it from $S$.
	By the induction hypothesis, we can find a new solution as long as the remaining set has size at least $\beta_{\lambda_0} \cdot (\Gamma_q)^n$, and in each step we remove at most $k$ vectors from $S$, so we find a list of length $L_0 \geq P(k,\lambda_0) \cdot (\Gamma_q)^n$.
	
	If one of the solutions in the list has strictly more than $\lambda_0$ different vectors, then we are done.
	So we may assume that every solution in the list has exactly $\lambda_0$ different vectors.
	
	We sort the entries in the list according to their partition pattern.
	We say that a solution $(\myvec{x_1^{(i)}},\ldots,\myvec{x_k^{(i)}})$ is \emph{compatible} with a partition $[k] = J_1 \cup \cdots \cup J_{\lambda_0}$ if for all $j_1,j_2\in [k]$ we have: $\myvec{x_{j_1}^{(i)}} = \myvec{x_{j_2}^{(i)}}$ if and only if $j_1$ and $j_2$ belong to the same partition class.
	Evidently every solution is compatible with exactly one partition.
	By the pigeonhole principle, we may choose a partition $[k] = J_1 \cup \cdots \cup J_{\lambda_0}$ that occurs at least $(\Gamma_q)^n$ times in our list of solutions.
	Thus, we obtain a list $\{(\myvec{y_1^{(i)}},\ldots,\myvec{y_k^{(i)}})\}_{i=1}^{L_1}$ of solutions of the same partition type, where $L_1 \geq (\Gamma_q)^n$.
	
	Now we have two competing partitions of $[k]$, given by the column equivalence classes and the (now fixed) partition type $[k] = J_1 \cup \cdots \cup J_{\lambda_0}$.
	For $j_1,j_2 \in [k]$, we write $j_1 \coleq j_2$ if $j_1$ and $j_2$ are in the same column equivalence class, and $j_1 \parteq j_2$ if $j_1$ and $j_2$ belong to the same class in the partition $[k] = J_1 \cup \cdots \cup J_{\lambda_0}$ (i.e.{} if $\myvec{y_{j_1}^{(i)}} = \myvec{y_{j_2}^{(i)}}$ for all $i \in [L_1]$).
	
	Since $\lambda_0 < k$, we may choose distinct $j_0,j_1 \in [k]$ with $j_0 \parteq j_1$.
	Furthermore, since \mytag{} is a type (RC) linear system (see \mycref{def:equivalence}{itm:equiv:type-DC}), it has at most one column equivalence class of size $1$, so we may assume without loss of generality that $j_1$ belongs to a column equivalence class of size $2$ or more.
	We distinguish two cases, depending on which of the column equivalence classes $j_0$ and $j_1$ belong to.
	\begin{itemize}
		\item \textbf{Case 1:} $j_0 \ncoleq j_1$ or $j_0$ and $j_1$ belong to the same column equivalence class of size at least $3$. In this case, we may choose $j_2 \neq j_0,j_1$ such that $j_1 \coleq j_2$.
		By \cref{cor:single-replacement}, there is a solution $(\myvec{z_1},\ldots,\myvec{z_k})$ of \mytag{} of the form
		\[ \myvec{z_j} = \begin{cases}
			\myvec{y_j^{(i)}},&\quad\text{if $j \neq j_1,j_2$};\\[1ex]
			\myvec{y_j^{(i')}},&\quad\text{if $j = j_1$};\\[1ex]
			\myvec{y_j^{(i'')}},&\quad\text{if $j = j_2$};
		\end{cases} \]
		for some $i,i',i'' \in [L_1]$ with $i \neq i',i''$.
		In other words, $(\myvec{z_1},\ldots,\myvec{z_k})$ is obtained by taking the solution $(\myvec{y_1^{(i)}},\ldots,\myvec{y_k^{(i)}})$ and replacing two entries.
		
		We prove that $|\{\myvec{z_1},\ldots,\myvec{z_k}\}| \geq \lambda_0 + 1$.
		First, note that $\{\myvec{z_{j_1}},\myvec{z_{j_2}}\} \cap \{\myvec{z_j} \mid j \neq j_1,j_2\} = \varnothing$, since the solutions in the list were disjoint.
		Now we distinguish two cases.
		\begin{itemize}
			\item If $j_1 \parteq j_2$, then the removal of the $j_1$-th and $j_2$-th vectors from $(\myvec{y_1^{(i)}},\ldots,\myvec{y_k^{(i)}})$ does not change the number of different vectors, since $\myvec{y_{j_0}^{(i)}} = \myvec{y_{j_1}^{(i)}} = \myvec{y_{j_2}^{(i)}}$.
			We replace them by two vectors $\myvec{z_{j_1}},\myvec{z_{j_2}}$ which are distinct from the other vectors in the solution (but possibly $\myvec{z_{j_1}} = \myvec{z_{j_2}}$), so the number of different vectors increases by at least $1$.
			
			\item If $j_1 \nparteq j_2$, then the removal of $j_1$-th and $j_2$-th vectors from $(\myvec{y_1^{(i)}},\ldots,\myvec{y_k^{(i)}})$ decreases the number of different vectors by at most $1$, because $\myvec{y_{j_0}^{(i)}} = \myvec{y_{j_1}^{(i)}}$.
			In this case we are guaranteed to have $\myvec{z_{j_1}} \neq \myvec{z_{j_2}}$: different solutions in the list are disjoint, but even within the same solution the $j_1$-th and $j_2$-th entry are always different (because $j_1 \nparteq j_2$).
			Thus, adding $\myvec{z_{j_1}}$ and $\myvec{z_{j_2}}$ to the solution increases the number of different vectors by $2$.
			The net effect is an increase of at least $1$.
		\end{itemize}
		This proves our claim that $|\{\myvec{z_1},\ldots,\myvec{z_k}\}| \geq \lambda_0 + 1$.
		
		\item \textbf{Case 2:} $j_0$ and $j_1$ belong to the same column equivalence class of size $2$.
		Then, by assumption~\ref{itm:intro:mod:nz} from the theorem statement, the $j_0$-th and $j_1$-th columns of \mytag{} do not sum to zero.
		
		By \cref{cor:single-replacement}, there is a solution $(\myvec{z_1},\ldots,\myvec{z_k})$ of \mytag{} of the form
		\[ \myvec{z_j} = \begin{cases}
			\myvec{y_j^{(i)}},&\quad\text{if $j \neq j_0,j_1$};\\[1ex]
			\myvec{y_j^{(i')}},&\quad\text{if $j = j_0$};\\[1ex]
			\myvec{y_j^{(i'')}},&\quad\text{if $j = j_1$};
		\end{cases} \]
		for some $i,i',i'' \in [L_1]$ with $i \neq i',i''$.
		
		Suppose for the sake of contradiction that $\myvec{z_{j_0}} = \myvec{z_{j_1}}$; that is, $\myvec{y_{j_0}^{(i')}} = \myvec{y_{j_1}^{(i'')}}$.
		Since the $j_0$-th and $j_1$-th columns of \mytag{} do not sum to zero, and since $\myvec{y_{j_0}^{(i)}} = \myvec{y_{j_1}^{(i)}}$, the fact that both $(\myvec{y_1^{(i)}},\ldots,\myvec{y_k^{(i)}})$ and $(\myvec{z_1},\ldots,\myvec{z_k})$ are solutions of \mytag{} implies that $\myvec{y_{j_0}^{(i')}} = \myvec{y_{j_1}^{(i'')}} = \myvec{y_{j_0}^{(i)}} = \myvec{y_{j_1}^{(i)}}$.
		This is a contradiction, because $i \neq i',i''$, and different solutions of the list are disjoint.
		Therefore we must have $\myvec{z_{j_0}} \neq \myvec{z_{j_1}}$.
		
		The removal of $\myvec{y_{j_0}^{(i)}}$ and $\myvec{y_{j_1}^{(i)}}$ from the solution decreases the number of different vectors by at most $1$, since $\myvec{y_{j_0}^{(i)}} = \myvec{y_{j_1}^{(i)}}$.
		On the other hand, putting back $\myvec{z_{j_0}}$ and $\myvec{z_{j_1}}$ increases the number of different vectors by $2$, since we have $\myvec{z_{j_0}} \neq \myvec{z_{j_1}}$ and $\{\myvec{z_{j_1}},\myvec{z_{j_2}}\} \cap \{\myvec{z_j} \mid j \neq j_1,j_2\} = \varnothing$.
		The net effect is an increase of at least $1$, so we have $|\{\myvec{z_1},\ldots,\myvec{z_k}\}| \geq \lambda_0 + 1$.
		\qedhere
	\end{itemize}
\end{proof}

\section{Proof of \myautorefstar{thm:intro:moderate}{itm:intro:mod:z}}
\label{sec:mod:z}

In this section, we develop our second main technique and combine it with the techniques from the previous section to prove \mycref{thm:intro:moderate}{itm:intro:mod:z}.

Our second main technique is the following lemma, which shows that, for every subset $S \subseteq \F_q^n$ of size at least $q^{1 + (1 - \frac{1}{k})n}$, the difference set $S - S$ contains linearly generic solutions to every linear system in $k$ variables (including systems with $k < 2m + 1$).\hair\footnote{We stress that the added generality of omitting the assumption that $k \geq 2m + 1$ will be needed in applications of \cref{lem:matrix-pigeonhole} in the proofs of \mycref{thm:intro:moderate}{itm:intro:mod:z}, \mycref{thm:intro:temperate}{itm:intro:tmp:z} and \cref{thm:intro:rank}, because there we apply \cref{lem:matrix-pigeonhole} to a system which has one column from each column equivalence class from the original system \mytag{}.}
The proof uses a simple counting argument and does not rely on the slice rank method at all.

\begin{lemma}
	\label{lem:matrix-pigeonhole}
	Let $A=(a_{ij}) \in \F_q^{m \times k}$ be a non-zero matrix and let $S \subseteq \F_q^n$ have size at least $q^{1 + (1 - \frac{1}{k})n}$. Then there are $(\myvec{x_1},\ldots,\myvec{x_k}),(\myvec{y_1},\ldots,\myvec{y_k}) \in S^k$ such that, for all $\myvec{b} = (b_1,\ldots,b_k) \in \F_q^k$, one has $b_1\myvec{x_1} + \cdots + b_k\myvec{x_k} = b_1\myvec{y_1} + \cdots + b_k\myvec{y_k}$ if and only if $\myvec{b}$ is in the row space of $A$.
\end{lemma}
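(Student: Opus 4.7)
The plan is to reformulate the ``iff'' condition as a statement about the $k \times n$ matrix $Z := X - Y$ (identifying a $k$-tuple in $(\F_q^n)^k$ with the $k \times n$ matrix over $\F_q$ whose rows are the tuple entries). The condition says that the left null space $\{b \in \F_q^k : b^T Z = 0\}$ equals $\rowspace A$. This left null space always contains $\rowspace A$ precisely when $AZ = 0$, and once that holds, equality of the two subspaces is a pure dimension count: it becomes $\rank Z = k - r$, where $r := \rank A$. The case $r = k$ is trivial (take $Y = X$, so $Z = 0$), so I shall assume $1 \leq r < k$ throughout.

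First, I would apply the pigeonhole principle to the linear map $\Phi : S^k \to \F_q^{m \times n}$, $\Phi(X) = AX$. Its image consists of matrices whose columns lie in the column space of $A$, which is a subspace of $\F_q^{m \times n}$ of size $q^{rn}$. Since $|S^k| \geq q^{k + (k-1)n}$, some fibre $F_c := \Phi^{-1}(c) \cap S^k$ satisfies $|F_c| \geq q^{k + (k-1-r)n}$, and any two elements of $F_c$ automatically satisfy $A(X-Y) = 0$.

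The next step, which I expect to be the main technical ingredient, is estimating the ``bad'' set $B := \{Z \in \F_q^{k \times n} : AZ = 0 \text{ and } \rank Z < k - r\}$. A $Z$ with $AZ = 0$ lies in $B$ iff its left null space strictly contains $\rowspace A$, equivalently iff there exists $b \in \F_q^k \setminus \rowspace A$ with $b^T Z = 0$, a condition depending only on the coset $b + \rowspace A$. I would then apply a union bound over the $q^{k-r} - 1$ nonzero cosets: for each such coset, adjoining $b^T Z = 0$ to $AZ = 0$ gives a homogeneous system whose coefficient matrix has rank $r+1$ (since $b \notin \rowspace A$), so the joint solution space inside $\F_q^{k \times n}$ has size $q^{(k-r-1)n}$. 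The resulting bound is $|B| \leq q^{k - r + (k - r - 1) n}$.

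Finally, comparing the two estimates gives $|F_c|/|B| \geq q^r \geq q > 1$, so for any $X \in F_c$ the translate $X + B$ cannot cover $F_c$. Choosing $Y \in F_c \setminus (X + B)$ yields $A(X - Y) = 0$ together with $X - Y \notin B$, which jointly force $\rank(X-Y) = k - r$. This is exactly the reformulated condition, so $(X, Y)$ is the required pair.
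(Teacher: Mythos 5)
Your proposal is correct and takes essentially the same route as the paper's proof: pigeonhole on the value of $AX$ to obtain a large fibre, then a counting/union bound showing that differences $Z=X-Y$ with $AZ=0$ satisfying an additional relation outside the row space of $A$ (fewer than $q^{k-r}$ cosets, each contributing $q^{(k-r-1)n}$ matrices) are too few to exhaust the fibre. The only cosmetic difference is that the paper first normalizes $A$ to the form $[A'\ I_m]$ and phrases the bad-set count as counting linearly dependent $(k-m)$-tuples after projecting onto the free coordinates, whereas you argue coordinate-free via $\rank(X-Y)$ and cosets of the row space.
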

\begin{proof}
	By removing redundant rows, we may assume without loss of generality that $\rank A=m$. If $k=m$, then we can take $\myvec{x} = \myvec{y} \in S^k$ arbitrary. Hence, we may assume that $k\geq m+1$. By performing elementary row operations and permuting columns, we may assume without loss of generality that $A$ is of the form $[A'\ I_m]$ for some $A' \in \F_q^{m \times (k - m)}$.
	
	The matrix $A$ defines a function $f : (\F_q^n)^k \to (\F_q^n)^m$, where $[f(\myvec{x_1},\ldots,\myvec{x_k})]_i = a_{i1} \myvec{x_1} + \cdots + a_{ik} \myvec{x_k}$.
	By the pigeonhole principle, we may choose some $\myvecvec{z} = (\myvec{z_1},\ldots,\myvec{z_m}) \in (\F_q^n)^m$ such that the set $T := f^{-1}(\myvecvec{z}) \cap S^k$ has size $|T| \geq |S|^k/q^{mn} \geq q^k q^{(k-m-1)n}$.
	
	Let $\pi : (\F_q^n)^k \to (\F_q^n)^{k-m}$ be the projection onto the first $k-m$ coordinates, let $g : T \to (\F_q^n)^{k-m}$ be the restriction of $\pi$ to $T$, and let $T' := g[T]$.
	Since $A$ is of the form $[A'\ I_m]$, it is easy to see that for every $(\myvec{x_1},\ldots,\myvec{x_{k-m}}) \in (\F_q^n)^{k-m}$ there is exactly one possible choice of $(\myvec{x_{k-m+1}},\ldots,\myvec{x_k}) \in (\F_q^n)^m$ such that $f(\myvec{x_1},\ldots,\myvec{x_k}) = \myvecvec{z}$.
	Therefore $g$ is injective, and it follows that $|T'| = |T|$.
	
	Let $D = \{ (\myvec{w_1},\ldots,\myvec{w_{k-m}}) \in (\F_q^n)^{k-m} \mid \myvec{w_1},\ldots,\myvec{w_{k-m}} \ \text{are linearly dependent}\}$.
	Then $|D| < q^{k-m} q^{(k-m-1)n}$ since there are fewer than $q^{k-m}$ possible linear relations.
	
	Choose some $\myvecvecprime{y} = (\myvec{y_1'},\ldots,\myvec{y_{k-m}'}) \in T'$.
	Since $|T' - \myvecvecprime{y}| = |T'| > |D|$, we have $(T - \myvecvecprime{y}) \setminus D \neq \varnothing$, so we may choose $(\myvec{x_1'},\ldots,\myvec{x_{k-m}'}) \in T'$ such that $\myvec{x_1'} - \myvec{y_1'} , \ldots , \myvec{x_{k-m}'} - \myvec{y_{k-m}'}$ are linearly independent.
	Let $(\myvec{x_1},\ldots,\myvec{x_k}),(\myvec{y_1},\ldots,\myvec{y_k}) \in T \subseteq S^k$ be the (unique) preimages of $(\myvec{x_1'},\ldots,\myvec{x_{k-m}'})$ and $(\myvec{y_1'},\ldots,\myvec{y_{k-m}'})$ under $g$.
	Note that $(\myvec{x_1},\ldots,\myvec{x_{k-m}}) = (\myvec{x_1'},\ldots,\myvec{x_{k-m}'})$ and $(\myvec{y_1},\ldots,\myvec{y_{k-m}}) = (\myvec{y_1'},\ldots,\myvec{y_{k-m}'})$, since $g$ is just a coordinate projection.
	
	We claim that $(\myvec{x_1},\ldots,\myvec{x_k})$ and $(\myvec{y_1},\ldots,\myvec{y_k})$ satisfy the required property.
	
	Since $f(\myvec{x_1},\ldots,\myvec{x_k}) = f(\myvec{y_1},\ldots,\myvec{y_k}) = \myvecvec{z}$, it is clear that $b_1\myvec{x_1} + \cdots + b_k\myvec{x_k} = b_1\myvec{y_1} + \cdots + b_k\myvec{y_k}$ whenever $(b_1,\ldots,b_k)$ is in the row space of $A$.
	
	Now let $\myvec{b} = (b_1,\ldots,b_k) \in \F_q^k$ be an arbitrary row vector such that $b_1\myvec{x_1} + \cdots + b_k\myvec{x_k} = b_1\myvec{y_1} + \cdots + b_k\myvec{y_k}$.
	Since $A$ is of the form $[A'\ I_m]$, we can add a linear combination of the rows of $A$ to $\myvec{b}$ to obtain a vector $\myvec{c} = (c_1,\ldots,c_k) \in \F_q^k$ with $c_{k-m+1} = \cdots = c_k = 0$.
	By linearity, we have $c_1\myvec{x_1} + \cdots + c_k\myvec{x_k} = c_1\myvec{y_1} + \cdots + c_k\myvec{y_k}$, or equivalently,
	\[ c_1(\myvec{x_1} - \myvec{y_1}) + \cdots + c_{k-m}(\myvec{x_{k-m}} - \myvec{y_{k-m}}) = 0. \]
	Since $\myvec{x_1} - \myvec{y_1},\ldots,\myvec{x_{k-m}} - \myvec{y_{k-m}}$ are linearly independent, it follows that $c_1 = \cdots = c_{k-m} = 0$, so we have $c_j = 0$ for all $j \in [k]$.
	This shows that $\myvec{b}$ is in the row space of $A$.
\end{proof}

We now come to the proof of \mycref{thm:intro:moderate}{itm:intro:mod:z}.
The proof is largely analogous to the proof of \mycref{thm:intro:moderate}{itm:intro:mod:nz} (see \mysecref{sec:mod:nz}), the main difference being that we now use \cref{lem:matrix-pigeonhole} to control column equivalence classes that sum to zero.

We prove the following slightly stronger theorem.

\begin{theorem}
	\label{thm:moderate:stronger}
	Let \mytag{}, $A$, $m$, $k$ and $\ell$ be as in \cref{sit:system}. Suppose that there is a partition $[k] = P_1 \cup \cdots \cup P_{2s}$ such that:
	\begin{enumerate}[label=\textup{(\roman*)},beginpenalty=100,midpenalty=999,endpenalty=100]
		\item\label{itm:mod:stronger:z} for all $r\in [s]$, the columns of $A$ indexed by $P_r\cup P_{s+r}$ sum to zero;
		\item\label{itm:mod:stronger:nz} if $(b_1,\ldots,b_k) \in \F_q^k \setminus \{0\}$ is a non-zero element in the row space of $A$, then one has $\sum_{j \in P_r} b_j \neq 0$ for at least two different values of $r \in [s]$.\hair\footnote{Note that we only look at $r \in \{1,\ldots,s\}$, and we ignore all $r \in \{s+1,\ldots,2s\}$. This is because it follows from \ref{itm:mod:stronger:z} that $\sum_{j \in P_r} b_j \neq 0$ if and only if $\sum_{j \in P_{s + r}} b_j \neq 0$. An equivalent statement is that $\sum_{j \in P_r} b_j \neq 0$ for at least four different values of $r \in [2s]$.}
		
		\item\label{itm:mod:stronger:eq} if $C$ is a column equivalence class of size $2$ that sums to zero, then there is some $r \in [s]$ such that $C = P_r \cup P_{s + r}$.
	\end{enumerate}
	Then \mytag{} is moderate.
\end{theorem}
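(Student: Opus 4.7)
The plan is to mirror the inductive proof of \myautoref{thm:intro:moderate}{itm:intro:mod:nz}, using \autoref{lem:matrix-pigeonhole} to rescue the case where the single-replacement trick fails. Set $\Gamma := \max(\Gamma_q, q^{1 - 1/k}) < q$: this allows both the tricoloured sum-free set bound (used in the replacement trick) and \autoref{lem:matrix-pigeonhole} (which requires $|S| \geq q^{1 + (1 - 1/s) n}$ for an application with $s \leq k$ columns) to be invoked with the same base. I prove by induction on $\lambda \in [k]$ that there exists a constant $\beta_\lambda \geq 1$ such that every $S \subseteq \F_q^n$ with $|S| \geq \beta_\lambda \cdot \Gamma^n$ contains a solution of \mytag{} with at least $\lambda$ distinct entries; the base case $\lambda = 1$ is trivial via the constant solution $(x, \ldots, x)$.

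The outer skeleton of the inductive step matches \autoref{sec:mod:nz}: build a list of pairwise disjoint solutions each with at least $\lambda_0$ distinct entries, pigeonhole on partition pattern to obtain a sublist $\{y^{(i)}\}_{i=1}^{L_1}$ with $L_1 \geq \Gamma^n$ sharing a common pattern $\Pi = J_1 \cup \cdots \cup J_{\lambda_0}$, and pick distinct $j_0, j_1 \in J_u$ with $j_1$ in a column equivalence class of size at least $2$. Cases~1 and~2a from the proof of \myautoref{thm:intro:moderate}{itm:intro:mod:nz} carry over verbatim via \autoref{cor:single-replacement}. The only new case is Case~2b, where $\{j_0, j_1\}$ is a column equivalence class of size $2$ summing to zero; by hypothesis~(iii) this class must equal $P_r \cup P_{s+r}$ for some $r \in [s]$. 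A short case analysis shows that if no valid $(j_0, j_1)$ avoids Case~2b, then every $J_u$ with $|J_u| \geq 2$ must itself be such a pair $P_r \cup P_{s+r}$ with $r$ in some non-empty $R \subseteq [s]$ (parts of size $\geq 3$ would force three indices into a single column equivalence class of size $2$, a contradiction), giving $\lambda_0 = k - |R|$.

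The main obstacle is rescuing Case~2b, which I plan to do by invoking \autoref{lem:matrix-pigeonhole} on the reduced matrix $\tilde A^{(s)} \in \F_q^{m \times s}$ defined by $\tilde A^{(s)}_{i, r} := \sum_{j \in P_r} a_{ij}$. Hypothesis~(ii) ensures that $\tilde A^{(s)}$ is non-zero and that any non-zero row-combination of $\tilde A^{(s)}$ has at least two non-zero entries; in particular, no standard basis vector $e_r$ lies in the row span of $\tilde A^{(s)}$. Applied to $\tilde A^{(s)}$, the lemma yields $(w_1, \ldots, w_s), (w_{s+1}, \ldots, w_{2s}) \in S^s$ with $w_r \neq w_{s+r}$ for every $r \in [s]$, and hypothesis~(i) makes the partition-constant assignment $z_j := w_r$ (for $j \in P_r$) a solution of \mytag{} in $S^k$ whose entries differ across every problematic pair $P_r \cup P_{s+r}$. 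The remaining task, which is the delicate core of the argument, is to boost the number of distinct entries from the (possibly small) $2s$ provided by a naive partition-constant solution to $\lambda_0 + 1 = k - |R| + 1$. I would do this by mixing the partition-constant construction with a reference list solution $y^{(i)}$, replacing its entries on the problematic pairs $\{j, j'\} = P_r \cup P_{s+r}$ by $w_r, w_{s+r}$, and using hypothesis~(i) to ensure the modified tuple remains a solution of \mytag{}; the careful bookkeeping to confirm that this tuple lies in $S^k$ and strictly increases the count of distinct entries is where I expect the proof to require the most effort.
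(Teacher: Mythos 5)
Your ingredients (induction on the number of distinct entries, the reduced matrix $B$ with entries $b_{ir}=\sum_{j\in P_r}a_{ij}$, hypothesis (ii) to rule out the unit vectors $e_r$ from its row space, hypothesis (i) for partition-constant solutions, hypothesis (iii) for the size-$2$ classes) are the same as the paper's, but the place where you deploy \autoref{lem:matrix-pigeonhole} --- inside the inductive step, to rescue Case~2b --- is exactly where the argument breaks, and you have flagged the spot yourself. The ``mixing'' step does not produce a solution of \mytag{}: if $P_r=\{j\}$, $P_{s+r}=\{j'\}$ is a problematic pair, then in the reference solution $y^{(i)}$ one has $y_j^{(i)}=y_{j'}^{(i)}$, so this pair contributes $0$ to every equation; after replacing these two entries by $w_r\neq w_{s+r}$ the contribution becomes $a_{ij}(w_r-w_{s+r})$, which is non-zero for some $i$ because the $j$-th column is non-zero. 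More generally, replacing the entries on any set $R'\subseteq[s]$ of pairs changes the left-hand side of the $i$-th equation by $\sum_{r\in R'}b_{ir}(w_r-w_{s+r})$, and \autoref{lem:matrix-pigeonhole} only guarantees that this vanishes when the sum runs over all of $[s]$, i.e.\ when the entire tuple is made partition-constant --- in which case you are back to at most $2s$ distinct entries, which can be far below $\lambda_0+1$, and you also have no control over coincidences between the $w$'s and the retained entries of $y^{(i)}$. So the boost from the partition-constant solution to $\lambda_0+1$ distinct values cannot be obtained this way, and the inductive step is not closed.

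The paper avoids Case~2b altogether by strengthening the induction hypothesis: one proves that $S$ contains a solution with at least $\lambda$ distinct entries \emph{and}, in addition, with $x_{j_1}\neq x_{j_2}$ for every column equivalence class $\{j_1,j_2\}$ of size $2$ that sums to zero. With this extra property, the situation $j_0\parteq j_1$ with $\{j_0,j_1\}$ a size-$2$ sum-to-zero class never arises, so the inductive step is literally the one from \myautoref{thm:intro:moderate}{itm:intro:mod:nz}, and it preserves the extra property automatically: the replacement keeps the contribution of each column equivalence class to \mytag{} fixed, and the extra property says precisely that the contribution of each size-$2$ sum-to-zero class is non-zero. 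All the new work is then pushed into the base case $\lambda=1$, where the partition-constant solution obtained from \autoref{lem:matrix-pigeonhole} applied to $B$ (exactly as in your proposal, using hypotheses (i)--(iii)) suffices, because there one only needs a single distinct value together with the extra property. Restructuring your argument along these lines repairs the gap; the pieces you have assembled are the right ones, but they must be distributed between the base case and the inductive step in this way.
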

Before we prove \cref{thm:moderate:stronger}, we first show how it implies \mycref{thm:intro:moderate}{itm:intro:mod:z}.
\begin{proof}[{Proof of \protect\mycref{thm:intro:moderate}{itm:intro:mod:z}, assuming \cref{thm:moderate:stronger}}]
	Let $C_1,\ldots,C_\ell \subseteq [k]$ be the column equivalence classes of $A$.
	We distinguish two cases:
	\begin{itemize}
		\item If $\ell = 1$, then we have $m = \rank(A) \leq \ell = 1$, so we are in the situation with a single equation. Since we assumed $k \geq 3$, there is no column equivalence class of size $2$, so it follows from \mycref{thm:intro:moderate}{itm:intro:mod:nz} that \mytag{} is moderate.
		
		\item Suppose that $\ell \geq 2$.
		Since $A$ is non-degenerate, every column of $A$ is non-zero.
		Hence, since the column equivalence classes of $A$ sum to zero, every column equivalence class has size at least $2$.
		For every $r \in [\ell]$, choose $j_r \in C_r$ arbitrary, and set $P_r := \{j_r\}$ and $P_{\ell + r} := C_r \setminus \{j_r\}$.
		
		We prove that the partition $[k] = P_1 \cup \cdots \cup P_{2\ell}$ satisfies the properties from \cref{thm:moderate:stronger}.
		Property~\ref{itm:mod:stronger:z} is met because each of the column equivalence classes sums to zero, and property~\ref{itm:mod:stronger:eq} is met by construction.
		To see that property~\ref{itm:mod:stronger:nz} is met, recall that \mytag{} is irreducible, so it follows from \cref{prop:two-classes} that every non-zero element of the row space of $A$ uses at least two different column equivalence classes.
		\qedhere
	\end{itemize}
\end{proof}

\begin{proof}[{Proof of \cref{thm:moderate:stronger}}]
	Let $\Gamma_q$ be the constant from \cref{thm:tricoloured}.
	We prove by induction on $\lambda$ that, for every $\lambda \in [k]$, there is a constant $\beta_\lambda \geq 1$ such that every subset $S \subseteq \F_q^n$ of size at least $\beta_\lambda \cdot (\max(\Gamma_q,q^{\frac{k-1}{k}}))^n$ contains a solution $(\myvec{x_1},\ldots,\myvec{x_k}) \in S^k$ of \mytag{} satisfying the following properties:
	\begin{enumerate}[label=(\alph*)]
		\item the solution contains at least $\lambda$ different vectors; that is, $|\{\myvec{x_1},\ldots,\myvec{x_k}\}| \geq \lambda$;
		\item\label{itm:pf:mod:z:nz} for every column equivalence class of size $2$ that sums to zero, the variables $\myvec{x_{j_1}},\myvec{x_{j_2}}$ corresponding to that class are distinct.
	\end{enumerate}
	Before proving the base case, we first show that the induction step from the proof of \mycref{thm:intro:moderate}{itm:intro:mod:nz} carries through unchanged.
	This time, part~\ref{itm:pf:mod:z:nz} of the induction hypothesis replaces the assumption~\ref{itm:intro:mod:nz} from \cref{thm:intro:moderate}.
	To see that property~\ref{itm:pf:mod:z:nz} is automatically maintained by the proof of \mycref{thm:intro:moderate}{itm:intro:mod:nz}, recall that the induction step consists of choosing a column equivalence class $C_t$ and replacing two variables from that class by other values, leaving the other classes unchanged.
	Since we started and ended with a solution of \mytag{}, the contribution of the variables $\{\myvec{x_j} \mid j \in C_t\}$ to \mytag{} must have remained the same.
	Property~\ref{itm:pf:mod:z:nz} is equivalent to saying that the contribution of $\{\myvec{x_j} \mid j \in C\}$ to \mytag{} is non-zero for every column equivalence class $C$ of size $2$ that sums to zero, so this property is automatically maintained by the proof of \mycref{thm:intro:moderate}{itm:intro:mod:nz}.
	
	\medskip
	It remains to prove the base case.
	Let $B = (b_{ir}) \in \F_q^{m \times s}$ be the matrix given by
	\[ b_{ir} \: := \: \sum_{j \in P_r} a_{ij} \: = \ -\!\!\!\!\!\sum_{j \in P_{s + r}} a_{ij}. \]
	Suppose that $S \subseteq \F_q^n$ has size at least $q \cdot (\max(\Gamma_q,q^{\frac{k-1}{k}}))^n$.
	It follows from \cref{lem:matrix-pigeonhole} that there are $(\myvec{z_1},\ldots,\myvec{z_s}),(\myvec{z_{s+1}},\ldots,\myvec{z_{2s}}) \in S^s$ such that, for all $(c_1,\ldots,c_s) \in \F_q^s$, one has $c_1 \myvec{z_1} + \cdots + c_s \myvec{z_s} = c_1 \myvec{z_{s+1}} + \cdots + c_s \myvec{z_{2s}}$ if and only if $(c_1,\ldots,c_s)$ is in the row space of $B$.
	By assumption~\ref{itm:mod:stronger:nz}, none of the standard unit vectors $\myvec{e_1},\ldots,\myvec{e_s} \in \F_q^s$ is in the row space of $B$, so it follows that $\myvec{z_r} \neq \myvec{z_{s+r}}$ for all $r\in [s]$ (since $\myvec{z_{r}} = \myvec{z_{s+r}}$ would imply that $\myvec{e_r}$ is in the row space of $B$).
	
	Since $[k] = P_1 \cup \cdots \cup P_{2s}$ is a partition, we may define $\myvec{y_1},\ldots,\myvec{y_k} \in \{\myvec{z_1},\ldots,\myvec{z_{2s}}\} \subseteq S$ in such a way that $\myvec{y_j} = \myvec{z_r}$ if and only if $j \in P_r$.
	Then for all $i \in [m]$ we have
	\begin{align*}
		a_{i1} \myvec{y_1} + \cdots + a_{ik} \myvec{y_k} &= \sum_{j \in P_1} a_{ij} \myvec{z_1} + \cdots + \sum_{j \in P_{2s}} a_{ij} \myvec{z_{2s}} \\
		&= b_{i1} \myvec{z_1} + \cdots + b_{is} \myvec{z_s} \, - \, b_{i1} \myvec{z_{s+1}} - \cdots - b_{is} \myvec{z_{2s}} = 0,
	\end{align*}
	so $(\myvec{y_1},\ldots,\myvec{y_k}) \in S^k$ is a solution of \mytag{}.
	Clearly $|\{\myvec{y_1},\ldots,\myvec{y_k}\}| \geq 1$.
	Furthermore, by assumption~\ref{itm:mod:stronger:eq}, for every column equivalence class $C = \{j_1,j_2\}$ of size $2$ that sums to zero, there is some $r \in [s]$ such that $P_r = \{j_1\}$ and $P_{s+r} = \{j_2\}$, so it follows that $\myvec{y_{j_1}} = \myvec{z_r} \neq \myvec{z_{s+r}} = \myvec{y_{j_2}}$.
\end{proof}

\section{Preliminaries on temperate systems}
\label{sec:temperate-prelims}
We now shift our attention from moderate to temperate systems.
We show that the problem of finding a generic solution is closely related to the problem of finding solutions of high dimension, and we show that we may once again restrict our attention to irreducible systems.

For an affine subspace $X \subseteq \F_q^n$ we let $\dim(X)$ denote the dimension of $X$. So $\dim(X)$ is the maximum number of affinely independent vectors in $X$ minus one. For a set $S \subseteq \F_q^n$, we let $\aff(S)$ denote the affine hull of $S$.

\begin{definition}
	For any given $k$-tuple $(\myvec{x_1},\ldots,\myvec{x_k}) \in (\F_q^n)^k$, let
	\[ \Annbal(\myvec{x_1},\ldots,\myvec{x_k}) = \{(b_1,\ldots, b_k) \in \F_q^k \mid b_1\myvec{x_1} + \cdots + b_k\myvec{x_k} = 0, \quad b_1 + \cdots + b_k = 0\}. \]
	So the elements of $\Annbal(\myvec{x_1},\ldots,\myvec{x_k})$ correspond to the balanced linear equations satisfied by $(\myvec{x_1},\ldots,\myvec{x_k})$.
\end{definition}

\begin{lemma}
	\label{lem:affine-rank-nullity}
	For every $(\myvec{x_1},\ldots,\myvec{x_k}) \in (\F^n)^k$ we have
	\[ \dim(\aff(\myvec{x_1},\ldots,\myvec{x_k})) + \dim(\Annbal(\myvec{x_1},\ldots,\myvec{x_k})) = k - 1. \]
\end{lemma}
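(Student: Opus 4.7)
The plan is to prove this by reducing the balanced annihilator to an ordinary kernel via the standard trick of choosing one of the $x_i$ as an affine base point, and then applying the rank-nullity theorem to a single linear map.

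Concretely, I would proceed as follows. Fix $x_1$ as the base point, and consider the linear map
\[ \varphi : \F^{k-1} \to \F^n, \qquad (b_2,\ldots,b_k) \longmapsto \sum_{i=2}^k b_i(x_i - x_1). \]
The image of $\varphi$ is exactly $\spn(x_2-x_1,\ldots,x_k-x_1)$, and a standard fact about affine hulls gives
\[ \dim(\aff(x_1,\ldots,x_k)) = \dim\spn(x_2 - x_1, \ldots, x_k - x_1) = \dim(\mathrm{image}\,\varphi). \]
(If $x_1,\ldots,x_k$ coincide, both sides are $0$; otherwise this is just the definition of affine dimension.)

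Next, I would exhibit an isomorphism $\ker\varphi \cong \Annbal(x_1,\ldots,x_k)$. Given $(b_2,\ldots,b_k) \in \ker\varphi$, set $b_1 := -(b_2+\cdots+b_k)$. Then $\sum_{i=1}^k b_i = 0$ by construction, and
\[ \sum_{i=1}^k b_i x_i \; = \; -\!\sum_{i=2}^k b_i \, x_1 + \sum_{i=2}^k b_i x_i \; = \; \sum_{i=2}^k b_i(x_i-x_1) \; = \; 0, \]
so $(b_1,\ldots,b_k) \in \Annbal(x_1,\ldots,x_k)$. Conversely, any $(b_1,\ldots,b_k) \in \Annbal(x_1,\ldots,x_k)$ satisfies $b_1 = -(b_2+\cdots+b_k)$, and the same calculation run backwards shows $(b_2,\ldots,b_k)\in\ker\varphi$. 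The maps $(b_2,\ldots,b_k)\mapsto (b_1,b_2,\ldots,b_k)$ and its inverse (forgetting the first coordinate) are clearly linear and mutually inverse, so $\dim\ker\varphi = \dim\Annbal(x_1,\ldots,x_k)$.

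Finally, applying rank-nullity to $\varphi$ yields $\dim\ker\varphi + \dim(\mathrm{image}\,\varphi) = k-1$, which by the two identifications above is exactly the desired equation. The only step that requires a small check is the standard fact equating $\dim\aff$ with $\dim\spn$ of the differences, but this is essentially the definition of affine dimension, so there is no real obstacle here; the proof is a routine rank-nullity argument once the base point is fixed.
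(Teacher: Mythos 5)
Your proof is correct, and it reaches the same rank--nullity identity by a slightly different route. The paper homogenizes: it forms the $(n+1)\times k$ matrix whose $j$-th column is $(1, x_j)$, observes that its kernel is \emph{literally} $\Annbal(x_1,\ldots,x_k)$ and that its rank is $\dim(\aff(x_1,\ldots,x_k))+1$ (affine independence of the $x_j$ being equivalent to linear independence of the lifted columns), and applies rank--nullity once. You instead fix $x_1$ as a base point and apply rank--nullity to the difference map $\varphi(b_2,\ldots,b_k)=\sum_{i\ge 2} b_i(x_i-x_1)$ on $\F^{k-1}$, which requires two small identifications that the paper's version gets for free: the explicit isomorphism $\ker\varphi \cong \Annbal(x_1,\ldots,x_k)$ (correctly exhibited via $b_1:=-(b_2+\cdots+b_k)$), and the standard fact $\dim(\aff(x_1,\ldots,x_k))=\dim\spn(x_2-x_1,\ldots,x_k-x_1)$. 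The trade-off is cosmetic: your argument stays in $\F^n$ but is asymmetric in the indices and needs the kernel identification, while the paper's augmented-matrix formulation is symmetric and makes $\Annbal$ appear directly as a kernel. Both are complete; no gap.
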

\begin{proof}
	Let $A \in \F^{(n + 1) \times k}$ be the matrix
	\[ A = \begin{pmatrix} 1 & \cdots & 1 \\[1.5ex] \vert & & \vert \\ \myvec{x_1} & \cdots & \myvec{x_k} \\ \vert & & \vert \end{pmatrix}. \]
	For $I \subseteq [k]$ the vectors $\myvec{x_i}, \ i\in I$ are affinely independent if and only if the columns of $A$ indexed by $I$ are linearly independent. So $\rank(A) = \dim(\aff(\myvec{x_1},\ldots,\myvec{x_k})) + 1$.
	
	Evidently, $\ker(A)$ is precisely $\Annbal(\myvec{x_1},\ldots,\myvec{x_k})$, so the result follows from the rank-nullity theorem.
\end{proof}

\begin{corollary}
	\label{cor:affine-rank-nullity}
	Let \mytag{} be a balanced linear system of rank $m$, with coefficient matrix $A \in \F_q^{m \times k}$, and let $(\myvec{x_1},\ldots,\myvec{x_k})$ be a solution of \mytag{}.
	Then $\dim(\aff(\myvec{x_1},\ldots,\myvec{x_k})) \leq k - m - 1$, with equality if and only if $(\myvec{x_1},\ldots,\myvec{x_k})$ is a generic solution of \mytag{}.
\end{corollary}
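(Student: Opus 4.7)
The plan is to deduce this corollary directly from Lemma \ref{lem:affine-rank-nullity} by identifying the row space of $A$ as a subspace of $\Annbal(x_1,\ldots,x_k)$ and tracking when equality holds.

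First I would observe that, since $(\star)$ is balanced, every row of $A$ is a vector $(a_{i1},\ldots,a_{ik}) \in \F_q^k$ whose entries sum to zero; and since $(x_1,\ldots,x_k)$ is a solution of $(\star)$, each such row satisfies $a_{i1}x_1 + \cdots + a_{ik}x_k = 0$. Hence every row of $A$ lies in $\Annbal(x_1,\ldots,x_k)$, so the whole row space of $A$ is contained in $\Annbal(x_1,\ldots,x_k)$. Because $\rank(A) = m$, this gives $\dim(\Annbal(x_1,\ldots,x_k)) \geq m$.

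Next I would invoke Lemma \ref{lem:affine-rank-nullity} to convert this into a statement about the affine hull:
\[ \dim(\aff(x_1,\ldots,x_k)) \: = \: (k - 1) - \dim(\Annbal(x_1,\ldots,x_k)) \: \leq \: (k - 1) - m, \]
which is the desired inequality.

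Finally I would characterize the equality case. Equality in the displayed formula is equivalent to $\dim(\Annbal(x_1,\ldots,x_k)) = m$, which (since the row space of $A$ is an $m$-dimensional subspace of $\Annbal(x_1,\ldots,x_k)$) is in turn equivalent to the row space of $A$ being equal to $\Annbal(x_1,\ldots,x_k)$. Unpacking definitions, this says that every balanced linear equation satisfied by $(x_1,\ldots,x_k)$ is a linear combination of the equations of $(\star)$, i.e.\ that $(x_1,\ldots,x_k)$ is a generic solution in the sense of \myautoref{def:non-degenerate}{itm:sol:generic}. There is no real obstacle here; the only thing to be careful about is not to confuse `linear combination of rows' with `element of the row space', which are of course the same thing.
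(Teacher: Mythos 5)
Your proposal is correct and follows essentially the same route as the paper: note that the row space of $A$ sits inside $\Annbal(x_1,\ldots,x_k)$, apply \autoref{lem:affine-rank-nullity} to get the bound $k-1-m$, and observe that equality holds exactly when the row space equals $\Annbal(x_1,\ldots,x_k)$, i.e.\ when the solution is generic. Nothing is missing.
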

\begin{proof}
	Since $(\myvec{x_1},\ldots,\myvec{x_k})$ is a solution of the system \mytag{}, the row space of $A$ is contained in $\Annbal(\myvec{x_1},\ldots,\myvec{x_k})$.
	Therefore we have $m = \rank(A) \leq \dim(\Annbal(\myvec{x_1},\ldots,\myvec{x_k}))$, so it follows from \cref{lem:affine-rank-nullity} that
	\[ \dim(\aff(\myvec{x_1},\ldots,\myvec{x_k})) = k - 1 - \dim(\Annbal(\myvec{x_1},\ldots,\myvec{x_k})) \leq k - 1 - m. \]
	Clearly we have equality if and only if the row space of $A$ is equal to $\Annbal(\myvec{x_1},\ldots,\myvec{x_k})$, which is equivalent to saying that all balanced linear equations satisfied by $(\myvec{x_1},\ldots,\myvec{x_k})$ are linear combinations of the equations in \mytag{}.
\end{proof}

\begin{proposition}
	\label{prop:temperate-reduce}
	Suppose that \mytag{} is equivalent to a linear system \mytagprime{} whose coefficient matrix $A'$ can be written as
	\[ A'=\begin{pmatrix} A_1 & 0 \\ 0 & A_2 \end{pmatrix} \]
	for some $A_1 \in \F_q^{m_1 \times k_1}$ and $A_2 \in \F_q^{m_2 \times k_2}$ with $m_1,m_2,k_1,k_2 \neq 0$.
	Then \mytag{} is temperate if and only if the systems given by $A_1$ and $A_2$ are temperate.
\end{proposition}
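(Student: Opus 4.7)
The $\Rightarrow$ direction is straightforward and follows the pattern of \autoref{prop:moderate-reduce}. If $(x_1,\ldots,x_k) \in S^k$ is a generic solution of \mytagprime{}, then I claim $(x_1,\ldots,x_{k_1})$ is a generic solution of $A_1$: any balanced equation $(b_1,\ldots,b_{k_1})$ on the truncated tuple extends by appending zeros to a balanced equation on $(x_1,\ldots,x_k)$, which by genericity lies in the row span of the full coefficient matrix; the block-diagonal structure then forces $(b_1,\ldots,b_{k_1})$ to lie in the row span of $A_1$. Symmetrically for $A_2$.

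For $\Leftarrow$, the starting point is the following structural characterization, obtained from \autoref{cor:affine-rank-nullity} together with the block structure: given $X_1 \in S^{k_1}$ generic for $A_1$ and $X_2 \in S^{k_2}$ generic for $A_2$, the concatenation $(X_1,X_2)$ is generic for \mytagprime{} if and only if $V_1 \cap V_2 = \{0\}$ (where $V_i$ denotes the direction of $\aff(X_i)$) and $\aff(X_1) \cap \aff(X_2) = \varnothing$. Indeed, splitting any $(b,c) \in \Annbal(X_1,X_2)$ by the value of $\lambda = \sum b_j = -\sum c_j$: if $\lambda = 0$, the common value $\sum b_j x_j = -\sum c_j y_j$ lies in $V_1 \cap V_2$; if $\lambda \neq 0$, after rescaling one obtains a common affine combination in $\aff(X_1) \cap \aff(X_2)$. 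In either case, the equation fails to lie in the row span of the block matrix unless the corresponding intersection is trivial.

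To produce $X_1$ and $X_2$ satisfying both conditions, fix a decomposition $\F_q^n = U_1 \oplus U_2 \oplus L$ with $\dim L = 1$ and $\dim U_1, \dim U_2$ approximately $(n-1)/2$, and partition $\F_q^n$ into the $q$ affine hyperplanes indexed by the $L$-coordinate. The argument proceeds by induction on $n$. Either some hyperplane contains more than $(1 - \tfrac{1}{2q}) \lvert S\rvert$ elements of $S$, in which case one applies the induction hypothesis inside that hyperplane (concentrated case); or each hyperplane contains at most that fraction (spread case), so the densest hyperplane contains at least $\lvert S\rvert/q$ elements and some other hyperplane contains at least $\lvert S\rvert/(2q(q-1))$ elements. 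In the spread case, averaging inside the densest hyperplane over its $q^{\dim U_2}$ cosets of $U_1$ yields a coset with $S$-intersection large enough to invoke the temperate property of $A_1$, producing $X_1$; similarly one produces $X_2$ in a coset of $U_2$ inside a different hyperplane. Since $X_i$ lies in a coset of $U_i$ we have $V_i \subseteq U_i$, so $V_1 \cap V_2 \subseteq U_1 \cap U_2 = \{0\}$; and since $X_1, X_2$ lie in distinct parallel affine hyperplanes, $\aff(X_1) \cap \aff(X_2) = \varnothing$.

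The main technical obstacle is the quantitative balance between the two dimensions: the symmetric split yields a workable bound only with the degraded exponential rate $\gamma' = \sqrt{q \cdot \max(\gamma_1,\gamma_2)}$, but this still satisfies $\gamma' < q$ (because $\max(\gamma_1,\gamma_2) < q$) and hence suffices for temperateness. The induction on $n$ terminates because $\gamma' \geq \sqrt{q}$ is strictly greater than $(1 - \tfrac{1}{2q})^{-1}$ for every prime power $q$, so the recursive hypothesis in the concentrated case is implied by the original hypothesis at dimension $n$; the base case is absorbed into the constant $\beta$.
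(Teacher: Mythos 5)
Your proof is correct, but it follows a genuinely different route from the paper's. The paper also reduces to concatenating a generic $A_1$-solution and a generic $A_2$-solution, but it separates them combinatorially rather than geometrically: it pigeonholes on a single coordinate $i$ to find two popular values $\alpha'\neq\alpha''$, finds a generic $A_1$-solution $y$ inside $S(i,\alpha')$, then chooses at most $k_1-1$ ``detecting'' coordinates $I$ (rows of the matrix with columns $y_1,\ldots,y_{k_1}$ spanning, together with the all-ones row, all rows) and restricts $S(i,\alpha'')$ to a fibre where the coordinates in $I$ are constant, losing only a factor $q^{k_1-1}$, before finding the generic $A_2$-solution there; genericity of the concatenation is then verified directly, using the coordinate $i$ to split any balanced annihilator $b$ into its two halves and the coordinates in $I$ to push the first half into $\rowspace(A_1)$. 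Since the losses are only polynomial in $n$ and constant in $q,k_1$, the paper keeps the rate at any $\gamma$ with $\max(\gamma_1,\gamma_2)<\gamma<q$. Your argument instead splits $\F_q^n=U_1\oplus U_2\oplus L$ and uses a concentration dichotomy, together with the clean sufficiency criterion that the concatenation is generic once $\aff(X_1)\cap\aff(X_2)=\varnothing$ and the direction spaces meet trivially (your verification of that criterion via $\lambda=\sum_j b_j$ is correct, and it is essentially \autoref{cor:affine-rank-nullity} in disguise); this buys conceptual transparency and avoids the detecting-coordinate device, but at the cost of the degraded rate $\sqrt{q\max(\gamma_1,\gamma_2)}$, which is still below $q$ and hence suffices for temperateness. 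Two small points you should make explicit: you may assume $\gamma_1,\gamma_2\geq 1$ (otherwise enlarge them) so that $\gamma'\geq\sqrt{q}>(1-\tfrac{1}{2q})^{-1}$ holds as claimed, and the transfer of genericity into and out of cosets of $U_1$, $U_2$ and the hyperplanes uses that injective affine maps preserve balanced linear relations (here it matters that $A_1$ and $A_2$ are themselves balanced, which follows from \mytagprime{} being balanced and block-diagonal); both are routine.
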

\begin{proof}
	If $\mytagprime{}$ is temperate, then it is easy to see that the same holds for the systems given by $A_1$ and $A_2$.
	
	Suppose that for $i=1,2$ the system given by $A_i$ is temperate, with constants $\beta_i,\gamma_i > 0$, where $\gamma_i< q$. Let $\gamma $ satisfy $\max(\gamma_1,\gamma_2)<\gamma<q$, and choose $\beta$ such that
	\[ \beta q^{\gamma n}\geq \max(qn\cdot \beta_1q^{\gamma_1n},\: nq^{k_1}\cdot \beta_2q^{\gamma_2n}) \qquad \text{for all $n \in \Z_{\geq 1}$}. \]
	Let $S\subseteq \F_q^n$ have size $|S|\geq \beta q^{\gamma n}$. For $i\in [n]$ and $\alpha\in \F_q$, write $S(i,\alpha):=\{\myvec{x} \in S \mid x_i = \alpha\}$.
	We claim that there exist $i\in [n]$ and distinct $\alpha',\alpha''\in \F_q$ such that $|S(i,\alpha')|, |S(i,\alpha'')|\geq \tfrac{|S|}{qn}$.
	For each coordinate $i\in[n]$, let $\alpha_i\in \argmax_{\alpha\in \F_q} |S(i,\alpha)|$ be a most popular value.
	Then $S\setminus\{(\alpha_1,\ldots, \alpha_n)\} = \cup_{i\in [n]} (S\setminus S(i,\alpha_i))$.
	So we can choose $i\in [n]$ such that $|S\setminus S(i,\alpha_i)| \geq \tfrac{|S|-1}{n}$.
	Then there is an $\alpha''\neq \alpha_i$ such that $S(i,\alpha'')\geq \frac{|S|-1}{n(q-1)}\geq \frac{|S|}{qn}$.
	Taking $\alpha'=\alpha_i$ proves the claim.
	
	Without loss of generality, we will assume that we can take $i=1$ in the claim. We denote
	$S_1 = S(1,\alpha')$ and $S_2 = S(1,\alpha'')$.
	Since $|S_1|\geq \beta_1q^{\gamma_1n}$, there exists a generic solution $\myvecvec{y} = (\myvec{y_1},\ldots,\myvec{y_{k_1}})\in (S_1)^{k_1}$ to the linear system given by $A_1$. We can take $I\subseteq [n]$ with $|I|\leq k_1-1$ such that for all $\myvec{b} = (b_1,\ldots, b_{k_1})\in \F_q^{k_1}$ with $b_1+\cdots+b_{k_1}=0$ we have:
	\begin{equation*}
		\forall i\in I: (b_1\myvec{y_1} + \cdots + b_{k_1}\myvec{y_{k_1}})_i = 0 \implies b_1\myvec{y_1} + \cdots + b_{k_1}\myvec{y_{k_1}} = 0.
	\end{equation*}
	Indeed, if $M\in \F_q^{n\times k_1}$ is the matrix with columns $\myvec{y_1},\ldots,\myvec{y_{k_1}}$, then we can take $I\subseteq [n]$ of size $|I|\leq k_1 - 1$ such that the rows of $M$ are contained in the span of the rows indexed by $I$ and the row vector $(1,\ldots, 1)$. Since $y_{11} = \cdots = y_{k_11}$ we may assume that $1\not\in I$.
	
	As $\myvecvec{y}$ is a generic solution to the system given by $A_1$, we obtain
	\begin{equation}\label{eqn:genericcoordinates}
		\forall i\in I: (b_1\myvec{y_1} + \cdots + b_{k_1}\myvec{y_{k_1}})_i = 0 \implies \myvec{b} \in \rowspace(A_1).
	\end{equation}
	We can take $\alpha_i\in \F_q$ for each $i\in I$ such that $T=\{\myvec{x} \in S_2 \mid x_i = \alpha_i \ \text{for all $i\in I$}\}$ has size $|T| \geq |S_2|\cdot q^{1-k_1} \geq \beta_2 q^{\gamma_2n}$.
	
	It follows that there exists a generic solution $\myvecvec{z} \in T^{k_2}$ to the system given by $A_2$.
	Now $\myvecvec{x} = (\myvecvec{y},\myvecvec{z})$ is a generic solution to \mytagprime{}.
	Indeed, let $\myvec{b} = (b_1,\ldots, b_k)\in \Annbal(\myvec{x_1},\ldots,\myvec{x_k})$.
	It suffices to show that $\myvec{b} \in \rowspace(A')$.
	Looking at the first coordinate and using that $b_1 + \cdots + b_{k} = 0$, we see that
	\begin{equation*}
		0 = (b_1 + \cdots + b_{k_1}) \alpha' + (b_{k_1+1} + \cdots + b_k) \alpha'' = (b_1 + \cdots + b_{k_1}) (\alpha'-\alpha'').
	\end{equation*}
	Since $\alpha'\neq \alpha''$, we find that $b_1 + \cdots + b_{k_1} = 0 = b_{{k_1}+1} + \cdots + b_k$.
	Since $\myvecvec{z} \in T^{k_2}$ it follows that
	\[ (b_1\myvec{y_1} + \cdots + b_{k_1}\myvec{y_{k_1}})_i = (b_1\myvec{x_1} + \cdots + b_k\myvec{x_k})_i = 0 \ (\forall i\in I). \]
	It now follows from \eqref{eqn:genericcoordinates} that $(b_1,\ldots, b_{k_1})\in \rowspace(A_1)$. So after modifying $\myvec{b}$ by an element of $\rowspace(A')$, we may assume that $b_1,\ldots,b_{k_1} = 0$. Hence the fact that $\myvec{b} \in \Annbal(\myvec{x_1},\ldots,\myvec{x_k})$ implies that $b_{k_1+1}\myvec{z_1} + \cdots + b_k\myvec{z_{k_2}} = 0$. Since $\myvecvec{z}$ is generic, we conclude that $(b_{k_1+1},\ldots,b_k)\in \rowspace(A_2)$. Hence, $\myvec{b} \in \rowspace(A')$.
\end{proof}

\section{Proof of \autoref{thm:intro:temperate} and \autoref{thm:intro:rank}}
\label{sec:temperate-proofs}

In this section, we develop the multiple replacement trick (\cref{cor:multiple-replacement}) and use it (in combination with \cref{lem:matrix-pigeonhole}) to prove \cref{thm:intro:temperate} and \cref{thm:intro:rank}.

We start with a many-solutions version of \cref{lem:pairs}.

\begin{lemma}
	\label{lem:manypairs}
	Let $q$ be a prime power, let $N_0 = (\Gamma_q)^n$, where $\Gamma_q$ is as in \cref{thm:tricoloured}, and let $t$ be a positive integer.
	Let $\myvec{x_1},\ldots,\myvec{x_L} \in \F_q^n$ be distinct, let $\myvec{y_1},\ldots,\myvec{y_L} \in \F_q^n$ be distinct, and let $\alpha,\beta\in \F_q\setminus\{0\}$.
	If $L\geq 4tN_0$, then there exists an $i \in [L]$ such that
	\[ \left| \big\{(i',i'') \in ([L] \setminus \{i\})^2 \mid\alpha \myvec{x_{i'}} + \beta \myvec{y_{i''}} = \alpha \myvec{x_i} + \beta \myvec{y_i} \big\} \right| \geq t. \]
\end{lemma}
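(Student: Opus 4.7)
My plan is to argue by contradiction using the alteration (deletion) method on tricoloured sum-free sets. The case $t = 1$ follows immediately from \autoref{lem:pairs} (since then $L \geq 4N_0 \geq N_0$), so I shall assume $t \geq 2$. Setting $z_i := \alpha x_i + \beta y_i$, I will assume towards a contradiction that $N(i) \leq t-1$ for every $i \in [L]$, and extract a tricoloured sum-free subset of the sequence $\{(\alpha x_i, \beta y_i, -z_i)\}_{i=1}^L$ of size strictly larger than $N_0$, contradicting \autoref{thm:tricoloured}.

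The first step is to count the bad triples
\[ B \,:=\, \bigl\{(j_1, j_2, j_3) \in [L]^3 \,\big|\, \alpha x_{j_1} + \beta y_{j_2} = z_{j_3}, \ (j_1,j_2,j_3) \neq (j,j,j)\ \text{for any } j \bigr\}. \]
Exactly as in the proof of \autoref{lem:pairs}, the distinctness of the $x_i$ and of the $y_i$ forces every $(j_1,j_2,j_3) \in B$ to satisfy $j_1 \neq j_3$ and $j_2 \neq j_3$, and therefore $|\{j_1,j_2,j_3\}| \geq 2$. The contradiction hypothesis gives $|B| = \sum_{j_3 \in [L]} N(j_3) \leq L(t-1)$.

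The second step is a random alteration: include each $i \in [L]$ in a random set $I$ independently with probability $p := \tfrac{1}{2(t-1)}$. Since each $b \in B$ is supported on at least two distinct indices, $\Pr[b \subseteq I] \leq p^2$, and consequently
\[ \E\bigl[\,|I|\,\bigr] \,-\, \E\bigl[\,\#\{b \in B : b \subseteq I\}\,\bigr] \,\geq\, pL - p^2 L(t-1) \,=\, \tfrac{L}{4(t-1)} \,\geq\, \tfrac{tN_0}{t-1} \,>\, N_0, \]
where the last two inequalities use $L \geq 4tN_0$ and $t \geq 2$. I then fix a realization achieving this bound and delete one index of $I$ from each bad triple contained in $I$, producing a subset $I' \subseteq I$ with $|I'| > N_0$ and $B \cap (I')^3 = \varnothing$. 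The latter condition, together with the trivial identity $\alpha x_j + \beta y_j = z_j$, is exactly the statement that $\{(\alpha x_j, \beta y_j, -z_j)\}_{j \in I'}$ is a tricoloured sum-free set in $\F_q^n$, contradicting \autoref{thm:tricoloured}.

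The one step that needs real care is the calibration of $p$: balancing the $p^2$-loss from the deletion against the $pL$-gain forces $p = 1/(2(t-1))$ and is what produces the $4t$ factor in the hypothesis. The crucial input for this balance is the distinctness observation that keeps $\Pr[b \subseteq I] \leq p^2$ rather than the weaker $\leq p$; without that the argument would only yield a bound of order $\sqrt{L/t}$ and would not suffice.
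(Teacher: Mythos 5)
Your proof is correct and is essentially the paper's argument run in the contrapositive: the paper lower-bounds the number of good triples by sampling a random subset at density $1/(2t)$ (using the same key observation that every non-diagonal solution triple has at least two distinct indices) and then pigeonholes, whereas you assume every $i$ admits at most $t-1$ pairs and use the same sampling-plus-deletion computation at density $1/(2(t-1))$ to extract a tricoloured sum-free set of size exceeding $(\Gamma_q)^n$, contradicting \autoref{thm:tricoloured}. The remaining differences --- treating $t=1$ separately and invoking \autoref{thm:tricoloured} directly rather than through \autoref{lem:pairs} --- are cosmetic.
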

\begin{proof}
	Write
	\[ T := \{(i,i',i'')\in [L]^3\mid \alpha \myvec{x_{i'}} + \beta \myvec{y_{i''}} = \alpha \myvec{x_{i}} + \beta \myvec{y_{i}} \ \text{and} \  i \neq i',i''\}. \]
	By \cref{lem:pairs}, the set $T \cap J^3$ is nonempty for all $J \subseteq [L]$ with $|J| \geq N_0$.
	We claim that $|T \cap J^3| \geq |J| - N_0$ for all $J\subseteq [L]$.
	Indeed, suppose that $|T \cap J^3| < |J| - N_0$; then we could delete fewer than $|J| - N_0$ elements from $J$ to obtain a set $J'$ of size $|J'| > N_0$ such that $T \cap (J')^3$ is empty: a contradiction.
	So $|T \cap J^3| - |J| + N_0 \geq 0$ for all $J \subseteq [L]$.
	
	Let $J$ be the random subset of $[L]$ obtained by independently taking each element of $[L]$ with probability $\tfrac{1}{2t}$.
	We have $\E[|J|] = \tfrac{L}{2t}$ and $\E[|T \cap J^3|] \leq \tfrac{|T|}{(2t)^2}$ since $|\{i,i',i''\}| \geq 2$ for all $(i,i',i'') \in T$.
	From $\E[|T\cap J^3| - |J| + N_0] \geq 0$ we obtain $\tfrac{|T|}{4t^2} \geq \frac{L}{2t} - N_0$, and therefore $\tfrac{|T|}{L} \geq 2t - \frac{4t^2N_0}{L} \geq t$.
	Hence, by the pigeonhole principle, there is an $i \in [L]$ such that $|\{(i',i'') \in [L]^2 \mid (i,i',i'') \in T\}| \geq t$, as required.
\end{proof}

Recall that two solutions $(\myvec{x_1},\ldots,\myvec{x_k})$ and $(\myvec{y_1},\ldots,\myvec{y_k})$ are said to be disjoint if $\{\myvec{x_1},\ldots,\myvec{x_k}\} \cap \{\myvec{y_1},\ldots,\myvec{y_k}\} = \varnothing$. We obtain a corollary analogous to \cref{cor:single-replacement}.
\begin{corollary}[`Multiple replacement trick']
	\label{cor:multiple-replacement}
	Let $\{(\myvec{x_1^{(i)}},\ldots,\myvec{x_k^{(i)}})\}_{i=1}^L$ be a list of pairwise disjoint solutions of \mytag{}, and suppose that $j_1$ and $j_2$ are distinct indices from the same column equivalence class.
	Suppose that $L \geq 4t\cdot (\Gamma_q)^n$ for some positive integer $t$. Then there exists $i\in [L]$ and $t$ distinct pairs $(i'_s,i''_s)\in ([L] \setminus \{i\})^2$, $s\in [t]$, such that $(\myvec{y^{(s)}_1},\ldots,\myvec{y^{(s)}_k})\in (\F_q^n)^k$ given by
	\[ \myvec{y^{(s)}_j} = \begin{cases}
		\myvec{x_j^{(i)}},&\quad\text{if $j \neq j_1,j_2$};\\[1ex]
		\myvec{x_j^{(i_s')}},&\quad\text{if $j = j_1$};\\[1ex]
		\myvec{x_j^{(i''_s)}},&\quad\text{if $j = j_2$};
	\end{cases} \]
	is also a solution of \mytag{} for all $s\in [t]$.
\end{corollary}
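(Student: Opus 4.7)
The plan is to mirror the proof of the single replacement trick (Corollary \ref{cor:single-replacement}), with Lemma \ref{lem:pairs} replaced by its many-solutions analogue, Lemma \ref{lem:manypairs}. The setup is unchanged: since $j_1$ and $j_2$ lie in the same column equivalence class, the $j_1$-th and $j_2$-th columns of $A$ are nonzero scalar multiples of a common vector $v\in \F_q^m$, so we may write them as $\alpha v$ and $\beta v$ with $\alpha,\beta\in \F_q\setminus\{0\}$. Pairwise disjointness of the solutions in the list guarantees that the vectors $x_{j_1}^{(1)},\ldots,x_{j_1}^{(L)}$ are pairwise distinct, and similarly for $x_{j_2}^{(1)},\ldots,x_{j_2}^{(L)}$.

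Next, since $L\geq 4t\,(\Gamma_q)^n$, I would apply Lemma \ref{lem:manypairs} to these two families, producing an index $i\in[L]$ together with at least $t$ distinct pairs $(i'_s,i''_s)\in ([L]\setminus\{i\})^2$ satisfying
\[ \alpha\, x_{j_1}^{(i'_s)} + \beta\, x_{j_2}^{(i''_s)} \,=\, \alpha\, x_{j_1}^{(i)} + \beta\, x_{j_2}^{(i)} \qquad (s\in[t]). \]
Because the total contribution of the $j_1$-th and $j_2$-th entries to every equation of \mytag{} is $(\alpha v)\,x_{j_1}^{(\cdot)} + (\beta v)\,x_{j_2}^{(\cdot)} = v\bigl(\alpha x_{j_1}^{(\cdot)} + \beta x_{j_2}^{(\cdot)}\bigr)$, the displayed equality means that swapping $x_{j_1}^{(i)}$ for $x_{j_1}^{(i'_s)}$ and $x_{j_2}^{(i)}$ for $x_{j_2}^{(i''_s)}$ does not change the left-hand side of any equation in \mytag{}. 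Since the original tuple $(x_1^{(i)},\ldots,x_k^{(i)})$ is a solution, each of the $t$ modified tuples $(y_1^{(s)},\ldots,y_k^{(s)})$ defined in the statement is a solution as well.

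I do not expect a genuine obstacle here: the proof is essentially bookkeeping on top of Lemma \ref{lem:manypairs}. The only points worth checking carefully are the two distinctness hypotheses of that lemma (which follow directly from the pairwise disjointness assumption, even though the entries within a single solution need not be distinct) and the observation that the pairs $(i'_s,i''_s)$ produced by the lemma are required to lie in $([L]\setminus\{i\})^2$ rather than merely to differ from $(i,i)$; this is exactly the form in which Lemma \ref{lem:manypairs} delivers them.
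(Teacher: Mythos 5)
Your proposal is correct and follows the paper's own proof essentially verbatim: fix $\alpha v$ and $\beta v$ as the $j_1$-th and $j_2$-th columns, use pairwise disjointness to get the distinctness hypotheses, and apply Lemma~\ref{lem:manypairs} to obtain $i$ and the $t$ distinct pairs in $([L]\setminus\{i\})^2$, which keep every equation of \mytag{} unchanged. No gaps.
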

\begin{proof}
	Since the $j_1$-th and $j_2$-th column of \mytag{} are nonzero multiples of one another, we may choose a vector $\myvec{v} \in \F_q^m$ and constants $\alpha,\beta \neq 0$ such that the $j_1$-th column is equal to $\alpha \myvec{v}$ and the $j_2$-th column is equal to $\beta \myvec{v}$.
	
	By assumption, the vectors $\myvec{x_{j_1}^{(1)}},\ldots,\myvec{x_{j_1}^{(L)}}$ are pairwise distinct, and likewise the vectors $\myvec{x_{j_2}^{(1)}},\ldots,\myvec{x_{j_2}^{(L)}}$ are pairwise distinct, so it follows from \cref{lem:manypairs} that there exist $i\in [L]$ and $t$ distinct pairs $(i'_s,i''_s)\in ([L] \setminus \{i\})^2$, $s\in [t]$, with $\alpha \myvec{x_{j_1}^{(i)}} + \beta \myvec{x_{j_2}^{(i)}} = \alpha \myvec{x_{j_1}^{(i_s')}} + \beta \myvec{x_{j_2}^{(i_s'')}}$.
	Hence, the total contribution of $\myvec{x_{j_1}^{(i)}}$ and $\myvec{x_{j_2}^{(i)}}$ to the equations of \mytag{} is the same as the contribution of $\myvec{x_{j_1}^{(i_s')}}$ and $\myvec{x_{j_2}^{(i_s'')}}$.
	Since $(\myvec{x_1^{(i)}},\ldots,\myvec{x_k^{(i)}})$ is a solution of \mytag{}, so is $(\myvec{y^{(s)}_1},\ldots,\myvec{y^{(s)}_k})$.
\end{proof}

\begin{definition}
	Let $A\in \F_q^{m\times k}$ be a matrix and let $j_1,j_2\in [k]$ be distinct elements in the same column equivalence class of $A$. We say that $(b_1,\ldots, b_k)\in \F_q^k$ \emph{breaks} the pair $\{j_1,j_2\}$ if after adding the row $(b_1,\ldots, b_k)$ to $A$, the columns indexed by $j_1$ and $j_2$ are no longer scalar multiples of one another.
\end{definition}

\begin{lemma}
	\label{lem:breakingpairs}
	Let \mytag, $A$, $m$, $k$ and $\ell$ be as in \cref{sit:system}, let $j_1,j_2\in [k]$ be distinct indices in the same column equivalence class, and let $\{(\myvec{x_1^{(i)}},\ldots,\myvec{x_k^{(i)}})\}_{i=1}^L$ be a list of pairwise disjoint solutions to \mytag{}.
	If $L \geq 4q^k(\Gamma_q)^n$, then there exists $i\in [L]$ and a solution $(\myvec{y_1},\ldots,\myvec{y_k})$ to \mytag{} such that:
	\begin{enumerate}[label=\textup{(\roman*)},beginpenalty=100]
		\item\label{itm:bp:recombine} $\myvec{y_j} = \myvec{x^{(i)}_j}$ for all $j\neq j_1,j_2$ and $\myvec{y_j} \in \{\myvec{x^{(1)}_j},\ldots,\myvec{x^{(L)}_j}\}$ for $j \in \{j_1,j_2\}$;
		\item\label{itm:bp:annbal} $\Annbal(\myvec{y_1},\ldots,\myvec{y_k}) \subseteq \Annbal(\myvec{x_1^{(i)}},\ldots,\myvec{x_k^{(i)}})$;
		\item\label{itm:bp:no-break} no $\myvec{b} \in \Annbal(\myvec{y_1},\ldots,\myvec{y_k})$ breaks the pair $\{j_1,j_2\}$.
	\end{enumerate}
\end{lemma}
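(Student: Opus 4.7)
My plan is to apply the multiple replacement trick (Corollary~\ref{cor:multiple-replacement}) with $t := q^k$, which is legitimate since $L \geq 4q^k (\Gamma_q)^n$. This produces an index $i \in [L]$ and $q^k$ distinct recombinations $(y_1^{(s)},\ldots,y_k^{(s)})$ of $(x_1^{(i)},\ldots,x_k^{(i)})$ at positions $j_1$ and $j_2$. Writing the $j_1$-th and $j_2$-th columns of $A$ as $\alpha v$ and $\beta v$ (with $\alpha,\beta \neq 0$), every solution that agrees with $(x_1^{(i)},\ldots,x_k^{(i)})$ outside positions $j_1,j_2$ can be parametrized by a single vector $u \in \F_q^n$ via $y_{j_1} = x_{j_1}^{(i)} + \beta u$ and $y_{j_2} = x_{j_2}^{(i)} - \alpha u$. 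Because the original solutions are pairwise disjoint, distinct replacement pairs $(i_s',i_s'')$ give distinct pairs $(y_{j_1},y_{j_2})$, hence distinct parameters $u_1,\ldots,u_{q^k} \in \F_q^n$. Property~\ref{itm:bp:recombine} holds for every such recombination by construction, so the remaining work is to arrange \ref{itm:bp:annbal} and \ref{itm:bp:no-break}.

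For any $u \in \F_q^n$ and any $b = (b_1,\ldots,b_k) \in \F_q^k$ with $\sum_j b_j = 0$, a direct expansion yields
\[ \sum_j b_j y_j^{(u)} \;=\; \sum_j b_j x_j^{(i)} + (\beta b_{j_1} - \alpha b_{j_2})\, u, \]
so $b \in \Annbal(y^{(u)})$ iff $\sum_j b_j x_j^{(i)} = -(\beta b_{j_1} - \alpha b_{j_2})\, u$. Splitting into the cases $\beta b_{j_1} = \alpha b_{j_2}$ (non-breakers of $(j_1,j_2)$) and $\beta b_{j_1} \neq \alpha b_{j_2}$ (breakers), the first case reduces to $b \in \Annbal(x^{(i)})$ independently of $u$, while the second case pins down $u$ uniquely in terms of $b$. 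Consequently, property~\ref{itm:bp:no-break} automatically implies property~\ref{itm:bp:annbal}: if no breaker lies in $\Annbal(y^{(u)})$, then every $b \in \Annbal(y^{(u)})$ is handled by the first case and therefore lies in $\Annbal(x^{(i)})$.

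It remains to find a value of $u$ among $u_1,\ldots,u_{q^k}$ for which no breaker lies in $\Annbal(y^{(u)})$; call $u$ \emph{bad} otherwise. By the case analysis above, each bad $u$ equals $-(\beta b_{j_1} - \alpha b_{j_2})^{-1}\sum_j b_j x_j^{(i)}$ for some breaker $b$ with $\sum_j b_j = 0$; and since scaling $b$ by a nonzero constant does not change this $u$, one may restrict to $b$ in the affine subspace $\{b \in \F_q^k : \sum_j b_j = 0,\ \beta b_{j_1} - \alpha b_{j_2} = 1\}$, which has cardinality $q^{k-2}$. Hence there are at most $q^{k-2} < q^k$ bad $u$'s, so at least one $u_s$ is good, and that $u_s$ yields a recombination satisfying all three conditions. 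The step that requires the most care is the double bookkeeping: verifying that the $q^k$ recombinations produce $q^k$ distinct parameters $u_s$ (which uses pairwise disjointness of the original solutions), and that the scaling argument correctly compresses the count of bad $u$'s to at most $q^{k-2}$.
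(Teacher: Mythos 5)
Your proposal is correct and follows essentially the same route as the paper: apply the multiple replacement trick with $t=q^k$, observe that each balanced equation breaking $\{j_1,j_2\}$ is compatible with at most one of the $q^k$ recombinations (your parametrization by $u$ makes this explicit, and the disjointness of the solutions indeed guarantees the $u_s$ are distinct), pigeonhole to find a recombination avoided by all breakers, and note that property (iii) then forces property (ii). Your extra scaling step sharpening the count of bad recombinations to $q^{k-2}$ is a harmless refinement the paper does not need, since counting all balanced vectors ($<q^k$) already suffices.
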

\begin{proof}
	By \cref{cor:multiple-replacement}, we may choose $i\in [L]$ and a sequence $\{(i'_s,i''_s)\}_{s=1}^{q^k}$ of $q^k$ pairwise distinct pairs $(i_s',i_s'') \in ([L] \setminus \{i\})^2$ such that, for all $s \in [q^k]$, the $k$-tuple $(\myvec{z_1^{(s)}},\ldots,\myvec{z_k^{(s)}}) \in S^k$ defined by
	\[
		\myvec{z^{(s)}_j} = \begin{cases}
			\myvec{x^{(i)}_j} & \text{if $j\in [k]\setminus\{j_1,j_2\}$}\\[1ex]
			\myvec{x^{(i'_s)}_{j}} & \text{if $j=j_1$}\\[1ex]
			\myvec{x^{(i''_s)}_{j}} & \text{if $j=j_2$}
		\end{cases}
	\]
	is a solution to \mytag{}.
	
	If $\myvec{b} = (b_1,\ldots, b_k)$ breaks the pair $\{j_1,j_2\}$, then the contributions $b_{j_1}\myvec{z^{(s)}_{j_1}} + b_{j_2}\myvec{z^{(s)}_{j_2}}$ for $s \in [q^k]$ are pairwise distinct.
	Therefore we can have $\myvec{b} \in \Annbal(\myvec{z_1^{(s)}},\ldots,\myvec{x_k^{(s)}})$ for at most one value of $s$.
	Since the number of $\myvec{b} \in \F_q^k$ with $b_1 + \cdots + b_k = 0$ is less than $q^k$, we may choose $s_0 \in [q^k]$ such that no $\myvec{b} \in \Annbal(\myvec{z_1^{(s_0)}},\ldots,\myvec{z_k^{(s_0)}})$ breaks the pair $\{j_1,j_2\}$.
	
	Set $y := \myvec{z^{(s_0)}}$.
	Then \ref{itm:bp:recombine} and \ref{itm:bp:no-break} are met.
	To prove \ref{itm:bp:annbal}, let $\myvec{b} \in \Annbal(\myvec{y_1},\ldots,\myvec{y_k})$ be given.
	Since $\myvec{b}$ does not break the pair $\{j_1,j_2\}$, we have $b_{j_1}\myvec{z^{(s_0)}_{j_1}} + b_{j_2}\myvec{z^{(s_0)}_{j_2}} = b_{j_1}\myvec{x^{(i)}_{j_1}} + b_{j_2}\myvec{x^{(i)}_{j_2}}$, and therefore $\myvec{b} \in \Annbal(\myvec{x_1^{(i)}},\ldots,\myvec{x_k^{(i)}})$, as desired.
\end{proof}

\begin{lemma}
	\label{basecase}
	Let \mytag, $A$, $m$, $k$ and $\ell$ be as in \cref{sit:system}. Let $S\subseteq \F_q^n$ have size $|S|\geq q^{1+\tfrac{\ell-1}{\ell}n}$. Assume that at least one of the following two conditions holds:
	\begin{enumerate}[label=\textup{(\roman*)},beginpenalty=100]
		\item $\ell = m + 1$;
		\item every column equivalence class sums to zero.
	\end{enumerate}
	Then there exists a solution $\myvecvec{x} = (\myvec{x_1},\ldots,\myvec{x_k})\in S^k$ to \mytag{} with the following property:
	\begin{equation}
		\label{niceproperty}
		\begin{gathered}
			\text{If $\myvec{b} \in \Annbal(\myvec{x_1},\ldots,\myvec{x_k})$ preserves the column}\\
			\text{equivalence classes of \mytag{}, then $\myvec{b} \in \rowspace(A)$.}
		\end{gathered}
	\end{equation}
\end{lemma}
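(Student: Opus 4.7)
The approach is to handle the two sufficient conditions separately after a common setup. Pick representatives $j_r \in C_r$ for the $\ell$ column equivalence classes, and normalize the base vectors so that column $j$ of $A$ equals $\alpha_j$ times column $j_r$ for $j \in C_r$, with $\alpha_{j_r}=1$. Let $\tilde A \in \F_q^{m\times\ell}$ be the submatrix formed by columns $j_1,\ldots,j_\ell$ of $A$ (non-zero, since $\tilde A$ has rank $m \geq 1$ by non-degeneracy), and set $\tilde\alpha_r:=\sum_{j\in C_r}\alpha_j$. A direct check shows that the $b\in\F_q^k$ which preserve the column equivalence classes of \mytag{} are precisely those with $b_j=\mu_r\alpha_j$ for some $\mu\in\F_q^\ell$; in particular they form an $\ell$-dimensional subspace of $\F_q^k$ containing $\rowspace(A)$. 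The only subtlety in this characterization is ruling out a merging of classes, but this is impossible because columns from different classes have non-parallel base vectors, a property preserved by appending any single row.

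For case (ii), in which $\tilde\alpha_r=0$ for all $r$, I apply \autoref{lem:matrix-pigeonhole} to $\tilde A$; the bound $|S|\geq q^{1+\frac{\ell-1}{\ell}n}$ is exactly the required threshold for a matrix with $\ell$ columns. This produces $(w_r),(z_r)\in S^\ell$ satisfying $\sum_r b_r(w_r-z_r)=0$ iff $b\in\rowspace(\tilde A)$. Setting $x_{j_r}:=w_r$ and $x_j:=z_r$ for $j\in C_r\setminus\{j_r\}$ gives a valid solution of \mytag{}: the contribution of class $r$ to equation $i$ telescopes to $\tilde A_{ir}(w_r-z_r)$ after using $\alpha_{j_r}=1$ and $\tilde\alpha_r=0$, and summing over $r$ yields zero because row $i$ of $\tilde A$ lies in $\rowspace(\tilde A)$. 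For the niceness property, any preserving $b$ with $b_j=\mu_r\alpha_j$ lying in $\Annbal(x_1,\ldots,x_k)$ must satisfy $\sum_r\mu_r(w_r-z_r)=0$; hence, by the matrix-pigeonhole conclusion, $(\mu_r)_r$ is a linear combination of the rows of $\tilde A$, which unpacks back to $b\in\rowspace(A)$.

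For case (i), I may assume $\ell=m+1$ and that at least one $\tilde\alpha_r$ is non-zero (otherwise we are in case (ii) and the previous argument applies). Here, remarkably, the niceness condition is automatic from a dimension count, and any solution suffices. Indeed, the preserving-balanced subspace has dimension $\ell-1=m$, because the balancedness condition $\sum_r\mu_r\tilde\alpha_r=0$ imposes a non-trivial linear constraint on $\mu\in\F_q^\ell$; and $\rowspace(A)$ is an $m$-dimensional subspace of it (rows of $A$ are balanced and belong to $\rowspace(A)$, hence preserve the column equivalence classes), so the two coincide. Thus the trivial solution $x_j:=s$ for any $s\in S$ (non-empty by the size hypothesis) is a solution of \mytag{} whose $\Annbal$ equals the full balanced subspace, whose intersection with the preserving subspace is exactly $\rowspace(A)$.

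The main obstacle is precisely case (i) without case (ii): the naive attempt to reuse the case (ii) construction breaks down because the extra terms $\tilde A_{ir}\tilde\alpha_r z_r$ no longer vanish, spoiling the solution equations, and forcing them to vanish would require additional linear constraints on $z$ that the matrix-pigeonhole lemma does not automatically provide. The resolution is to drop the pigeonhole construction entirely in this subcase and instead exploit the tight dimension count $\dim(\text{preserving-balanced})=\ell-1=m=\dim\rowspace(A)$ forced by $\ell=m+1$, which renders niceness automatic so that the trivial solution suffices.
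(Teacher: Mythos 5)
Your proof is correct and takes essentially the same approach as the paper: in case (ii) you apply \autoref{lem:matrix-pigeonhole} to the $m\times\ell$ submatrix of representative columns and recombine exactly as the paper does, and your case (i) dimension count (the class-preserving balanced vectors form a space of dimension $\ell-1=m$ containing the $m$-dimensional row space, unless all class sums vanish) is just the contrapositive of the paper's argument that a balanced class-preserving row outside $\rowspace(A)$ forces every column equivalence class to sum to zero, reducing to case (ii). No gaps.
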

\begin{proof}
	Let $[k] = C_1\cup\cdots\cup C_\ell$ be the partition of $[k]$ into column equivalence classes.
	
	We first consider the case that condition (i) holds. Let $\myvecvec{x} = (\myvec{x_1},\ldots,\myvec{x_k})$ be any solution to~\mytag{}.
	Suppose that $\myvecvec{x}$ satisfies a balanced equation $b_1\myvec{x_1} + \cdots + b_k\myvec{x_k} = 0$ that preserves the column equivalence classes of \mytag, but $(b_1,\ldots, b_k)$ is not in the row space of $A$.
	Let $A'$ be the $(m+1)\times k$ matrix obtained by adding the row $(b_1,\ldots, b_k)$ to $A$. Then $\rank(A') = m+1 = \ell$.
	For $t\in [\ell]$ let $\myvec{\sigma_t} \in \F_q^{m+1}$ be the sum of the columns of $A'$ in class $C_t$.
	Since the column rank of $A'$ is $\ell$, it follows that if we take one index from each column equivalence class, the corresponding $\ell$ columns are linearly independent.
	Let $I = \{t \in [\ell] \mid \myvec{\sigma_t} \neq 0\}$.
	Then the $\myvec{\sigma_t}$, $t\in I$ are linearly independent and $\sum_{t\in I} \myvec{\sigma_t} = \sum_{t\in [\ell]} \myvec{\sigma_t} = 0$.
	It follows that $I = \varnothing$.
	So all column equivalence classes of $A'$ (and hence of $A$) sum to zero, and we are in case (ii).
	
	We now consider the case that condition (ii) holds. Denote by $V\subseteq \F_q^k$ the set of vectors that preserve the column equivalence classes of $A$. We will assume (by reordering the columns of $A$) that $C_1=\{1,\ldots, |C_1|\},\ldots, C_\ell=\{k-|C_\ell|,\ldots, k\}$. So there are row vectors $\myvec{v}_t\in \F_q^{|C_t|}$ such that 
	\[ V = \{\begin{bmatrix}c_1\myvec{v}_1 &\cdots&c_\ell\myvec{v}_\ell\end{bmatrix} \mid c_1,\ldots, c_\ell\in \F_q\}. \]
	Since the rows of $A$ belong to $V$ and $A$ has no zero columns, the $\myvec{v}_t$ have only nonzero entries. By scaling, we may assume that the first entry of $\myvec{v}_t$ equals $1$. For $t\in [\ell]$ let $j_t=|C_1|+\cdots+|C_{t-1}|+1$. So for all $\myvec{b}\in V$, we have $\myvec{b}=\begin{bmatrix}b_{j_1}\myvec{v}_1 &\cdots&b_{j_\ell}\myvec{v}_\ell\end{bmatrix}$.
	
	Let $A'=(a'_{it})\in \F_q^{m\times \ell}$ be the submatrix of $A$ induced by columns $j_1,\ldots, j_t$. Then $(b_1,\ldots, b_k)$ is in the row space of $A$ if and only if $(b_{j_1},\ldots, b_{j_\ell})$ is in the row space of $A'$. 
	
	Consider the system
	\[ \sum_{t=1}^\ell a'_{it}\myvec{y_t} = 0\text{ for all $i\in [m]$}. \]
	Since $|S|\geq q^{1+\frac{\ell-1}{\ell}n}$, it follows by \cref{lem:matrix-pigeonhole} that there are $(\myvec{y_1},\ldots,\myvec{y_\ell})$ and $(\myvec{z_1},\ldots,\myvec{z_\ell})$ in $S^\ell$ such that for all $(b_1,\ldots, b_\ell)\in \F_q^\ell$ one has $b_1(\myvec{y_1} - \myvec{z_1}) + \cdots + b_\ell(\myvec{y_\ell} - \myvec{z_\ell}) = 0$ if and only if $(b_1,\ldots, b_\ell)$ is in the row space of $A'$. Define $(\myvec{x_1},\ldots,\myvec{x_k})\in S^k$ by setting (for $t\in [\ell]$ and $j\in C_t$)
	\[ \myvec{x_{j}} = \begin{cases}
		\myvec{y_t} & \text{if $j = j_t$},\\
		\myvec{z_t} & \text{if $j\in C_t\setminus\{j_t\}$.}
	\end{cases} \]

	Since the entries of each $\myvec{v}_t$ sum to zero (the column equivalence classes sum to zero by assumption), we have 
	\begin{equation}\label{eq:restriction}
		b_{j_1}(\myvec{y}_1-\myvec{z}_1)+\cdots +b_{j_\ell}(\myvec{y}_\ell-\myvec{z}_\ell)=0\iff b_1\myvec{x}_1+\cdots+b_k\myvec{x}_k=0
	\end{equation}
	for every $\myvec{b}\in V$.

	We now check that $(\myvec{x}_1,\ldots, \myvec{x}_k)$ satisfies the required properties. To show that it is a solution to \mytag{}, let $\myvec{b}$ be a row of $A$. Then the restriction $(b_{j_1},\ldots, b_{j_\ell})$ is in the row space of $A'$, so $b_{j_1}(\myvec{y}_1-\myvec{z}_1)+\cdots +b_{j_\ell}(\myvec{y}_\ell-\myvec{z}_\ell)=0$. Hence, by \eqref{eq:restriction}, we have $b_1\myvec{x}_1+\cdots+b_k\myvec{x}_k=0$ as required.

	Let $\myvec{b}\in V\cap \Annbal(\myvec{x}_1,\ldots, \myvec{x}_k)$. It remains to show that $\myvec{b}$ is in the row space of $A$. By \eqref{eq:restriction}, we have $b_{j_1}(\myvec{y}_1-\myvec{z}_1)+\cdots +b_{j_\ell}(\myvec{y}_\ell-\myvec{z}_\ell)=0$, so $(b_{j_1},\ldots, b_{j_\ell})$ is in the row space of $A'$. It follows that $\myvec{b}$ is in the row space of $A$.
\end{proof}

We are now ready to prove \cref{thm:intro:temperate} and \cref{thm:intro:rank}.

\begin{proof}[Proof of \cref{thm:intro:temperate}]
	Let $\Gamma_q$ be the constant from \cref{thm:tricoloured}.
	For every nonnegative integer $t$, we define
	\[ N_t := q^{1+\tfrac{\ell-1}{\ell}n}+t\cdot(4kq^{k}(\Gamma_q)^n). \]
	Let $[k] = C_1\cup\cdots\cup C_\ell$ be the partition of $[k]$ into column equivalence classes of $A$.
	We will prove by induction on $|P|$ that, for every set $P \subseteq \binom{C_1}{2}\cup\cdots\cup \binom{C_\ell}{2}$ of equivalent pairs and for every set $S \subseteq \F_q^n$ of size $|S| \geq N_{|P|}$, the system \mytag{} has a solution $\myvecvec{x} = (\myvec{x_1},\ldots,\myvec{x_k}) \in S^k$ that satisfies \eqref{niceproperty} and such that no $(b_1,\ldots,b_k) \in \Annbal(\myvec{x_1},\ldots,\myvec{x_k})$ breaks a pair in $P$.
	\begin{itemize}
		\item For $|P| = 0$, the claim follows directly from \cref{basecase}.
		
		\item Assume that $|P|\geq 1$ and that the claim holds for all sets of fewer than $|P|$ pairs.
		Fix some $\{j_1,j_2\}\in P$, write $L = 4q^k(\Gamma_q)^n$, and let $S \subseteq \F_q^n$ be a set of size $|S| \geq N_{|P|}$.
		Since $|S| \geq N_{|P|} \geq kL + N_{|P|-1}$, it follows from the induction hypothesis that there exist $L$ pairwise disjoint solutions $\myvecvecnum{x}{(1)},\ldots,\myvecvecnum{x}{(L)} \in S^k$ to \mytag{} that satisfy \eqref{niceproperty} and such that no $(b_1,\ldots,b_k) \in \Annbal(\myvec{x_1^{(i)}},\ldots,\myvec{x_k^{(i)}})$ breaks a pair in $P \setminus \{\{j_1,j_2\}\}$, for all $i \in [L]$.
		
		By \cref{lem:breakingpairs}, we may choose $i_0 \in [L]$ and a solution $\myvecvec{x} = (\myvec{x_1},\ldots,\myvec{x_k})\in S^k$ to \mytag{} such that $\Annbal(\myvec{x_1},\ldots,\myvec{x_k}) \subseteq \Annbal(\myvec{x_1^{(i_0)}},\ldots,\myvec{x_k^{(i_0)}})$ and no $\myvec{b} \in \Annbal(\myvec{x_1},\ldots,\myvec{x_k})$ breaks the pair $\{j_1,j_2\}$.
		By construction, $\myvecvecnum{x}{(i_0)}$ satisfies \eqref{niceproperty} and no $\myvec{b} \in \Annbal(\myvec{x_1^{(i_0)}},\ldots,\myvec{x_k^{(i_0)}})$ breaks a pair in $P\setminus\{\{j_1,j_2\}\}$, so the same properties are true for $\myvecvec{x}$, because $\Annbal(\myvec{x_1},\ldots,\myvec{x_k})\subseteq \Annbal(\myvec{x_1^{(i_0)}},\ldots,\myvec{x_k^{(i_0)}})$.
		We conclude that $\myvecvec{x}$ satisfies \eqref{niceproperty} and no $\myvec{b} \in \Annbal(\myvec{x_1},\ldots,\myvec{x_k})$ breaks a pair in $P$.
	\end{itemize}
	Letting $P = \binom{C_1}{2}\cup\cdots\cup \binom{C_\ell}{2}$ completes the proof.
\end{proof}

\begin{proof}[Proof of \cref{thm:intro:rank}]
	If all column equivalence classes sum to zero, the result follows directly from \mycref{thm:intro:temperate}{itm:intro:tmp:z}.
	Assume therefore that not all column equivalence classes sum to zero.
	Let $\Gamma_q$ be the constant from \cref{thm:tricoloured}.
	For every nonnegative integer $t$ we define
	\[ N_t:=t\cdot(4kq^{k}(\Gamma_q)^n). \]
	Let $S\subseteq \F_q^n$ have size $|S|\geq N_{k_2}$. By the same argument as in the proof of \cref{thm:intro:temperate}, we have a solution $\myvecvec{x} = (\myvec{x_1},\ldots,\myvec{x_k}) \in S^k$ to \mytag{} such that no $\myvec{b} \in \Annbal(\myvec{x_1},\ldots,\myvec{x_k})$ breaks a pair from the same column equivalence class.
	In other words, $\myvec{b}$ preserves the column equivalence classes, so this proves part \ref{itm:intro:rnk:preserve}.
	
	For part \ref{itm:intro:rnk:dim-aff}, observe that $\Annbal(\myvec{x_1},\ldots,\myvec{x_k})$ does not contain all balanced linear equations that preserve the column equivalence classes, for otherwise every column equivalence class must sum to zero, contrary to our assumption.
	So we have $\dim(\Annbal(\myvec{x_1},\ldots,\myvec{x_k})) \leq \ell - 1$, and therefore $\dim(\aff(\myvec{x_1},\ldots,\myvec{x_k}))\geq k - \ell$, by \cref{lem:affine-rank-nullity}.
\end{proof}

\begin{remark}
	\label{rmk:rank-improvement}
	We compare the rank of the solution $(\myvec{x_1},\ldots,\myvec{x_k})$ in \cref{thm:intro:rank} to the rank given by \cref{thm:intro:Sauermann-rank}.
	Suppose we are in \cref{sit:system}, and set $r = k - 2m + 1$.
	Then $k \geq 2m - 1 + r$, so it follows from \cref{thm:intro:Sauermann-rank} that we can find a solution with $\dim(\spn(\myvec{x_1},\ldots,\myvec{x_k})) \geq r$, and therefore $\dim(\aff(\myvec{x_1},\ldots,\myvec{x_k})) \geq r - 1 = k - 2m$.
	
	So how do these two compare?
	If $\ell = 1$, then we must have $m = 1$ (because we assume that the rows of $A$ are linearly independent), so in this case the rank from \cref{thm:intro:rank} and \cref{thm:intro:Sauermann-rank} agree.
	If $\ell \geq 2$, then we see that \cref{thm:intro:rank} improves upon \cref{thm:intro:Sauermann-rank} whenever $m > \frac{\ell}{2}$.
	Then again, \cref{thm:intro:rank} only applies to a smaller class of linear systems.
\end{remark}

\section{Examples and applications}
\label{sec:examples}

We conclude this paper by looking at a few examples of type (RC) linear systems, to highlight the applications and limitations of the results from this paper.
First we will look at an application to sumsets in $\F_q^n$.
We show that our results can be used to find non-trivial solutions of an arbitrary linear system in the difference set $S - S$, but not in the sumset $S + S$.
After that, we will look at the systems studied by Mimura and Tokushige \cite{Mimura-Tokushige-star,Mimura-Tokushige-shape,Mimura-Tokushige-II}.
We show that our techniques furnish alternative proofs that those systems are moderate, and in many cases we strengthen this to show that the system is also temperate.

\subsection{Applications to sum and difference sets}
\label{subsec:examples:sumsets}

Since this paper studies linear systems with repeated columns, one obvious question is to which extent our results can be applied to the problem of finding solutions to a system of linear equations in sum and difference sets.
Throughout this section, let $\F_q$ be a finite field of characteristic $p$, and let $c_1,\ldots,c_l \in \F_q \setminus \{0\}$.
We consider the affinely independent sumset (or AIR-sumset)
\[ T := c_1 \cdot S \AIRplus \cdots \AIRplus c_l \cdot S = \{c_1 \myvec{x_1} + \cdots + c_l \myvec{x_l} \, \mid \, \myvec{x_1},\ldots,\myvec{x_l} \in S\ \text{affinely independent}\}. \]
If $c_1 + \cdots + c_l = 0$, then \cref{cor:intro:application-general} states that $T$ contains generic solutions to every linear system \mytag{}, provided that $S$ is sufficiently large.
We now prove this statement.

\begin{proof}[Proof of \cref{cor:intro:application-general}]
	Let $A = (a_{ij}) \in \F_q^{m \times k}$ be the coefficient matrix of the system \mytag{}.
	(Recall from the statement of \cref{cor:intro:application-general} that $A$ may be arbitrary.)
	Let $A' = (a_{ij}') \in \F_q^{m \times lk}$ be the $m \times lk$ matrix
	\[ A' = \begin{bmatrix} c_1 A \mid c_2 A \mid \cdots \mid c_l A \end{bmatrix}, \]
	and let \mytagprime{} be the corresponding linear system.
	Every column equivalence class of \mytagprime{} is the union of sets of the form $\{j,j+k,\ldots,j+(l-1)k\}$ (for some $j \in [k]$), so \mytagprime{} is of type (RC).
	Furthermore, the column equivalence classes sum to zero, because $c_1 + \cdots + c_l = 0$.
	Hence it follows from \mycref{thm:intro:temperate}{itm:intro:tmp:z} and \cref{prop:temperate-reduce} that \mytagprime{} is temperate.
	Therefore there are constants $\beta,\gamma \geq 1$ with $\gamma < q$ such that every set $S \subseteq \F_q^n$ with $|S| \geq \beta \cdot \gamma^n$ contains a generic solution of~\mytagprime{}.
	Choose such a generic solution $(\myvec{x_1},\ldots,\myvec{x_{lk}}) \in S^{lk}$, and define $\myvec{y_1},\ldots,\myvec{y_k} \in c_1\cdot S + \cdots + c_l\cdot S$ by
	\[ \myvec{y_j} := c_1 \myvec{x_j} + c_2 \myvec{x_{j+k}} + \cdots + c_l \myvec{x_{j + (l-1)k}}. \]
	Clearly $(\myvec{y_1},\ldots,\myvec{y_k})$ is a solution of the linear system \mytag{}.
	We show that $(\myvec{y_1},\ldots,\myvec{y_k})$ is linearly generic and that $\myvec{y_1},\ldots,\myvec{y_k} \in (c_1\cdot S \AIRplus \cdots \AIRplus c_l \cdot S) \cup \{0\}$.
	
	First, let $\myvec{b} = (b_1,\ldots,b_k) \in \F_q^k$ be such that $b_1 \myvec{y_1} + \cdots + b_k \myvec{y_k} = 0$.
	Then $(\myvec{x_1},\ldots,\myvec{x_{lk}})$ belongs to the kernel of the $1 \times lk$ matrix
	\[ B' = \begin{bmatrix} c_1 \myvec{b} \mid c_2 \myvec{b} \mid \cdots \mid c_l \myvec{b} \end{bmatrix}. \]
	Since $c_1 + \cdots + c_l = 0$, the entries of $B'$ sum to $0$, so $B'$ represents a balanced linear equation satisfied by $(\myvec{x_1},\ldots,\myvec{x_{lk}})$.
	Since $(\myvec{x_1},\ldots,\myvec{x_{lk}})$ is a generic solution of \mytagprime{}, it follows that $B'$ is in the row space of $A'$.
	Equivalently, $\myvec{b}$ is in the row space of $A$.
	This shows that $(\myvec{y_1},\ldots,\myvec{y_k})$ is linearly generic.
	
	To complete the proof, it suffices to show that $\myvec{y_j} = 0$ whenever the vectors $\myvec{x_j},\myvec{x_{j+k}},\ldots,\myvec{x_{j+(l-1)k}}$ are affinely dependent, for every $j \in [k]$.
	To that end, suppose that $\myvec{x_j},\myvec{x_{j+k}},\ldots,\myvec{x_{j+(l-1)k}}$ are affinely dependent.
	Then there is some $\myvec{b} = (b_1,\ldots,b_l) \in \F_q^l \setminus \{0\}$ with $b_1 + \cdots + b_l = 0$ and
	\begin{equation*}
		b_1 \myvec{x_j} + b_2 \myvec{x_{j+k}} + \cdots + b_l \myvec{x_{j + (l-1)k}} = 0. \tag*{$(\myvec{b'})$}
	\end{equation*}
	Since $(\myvec{x_1},\ldots,\myvec{x_{lk}})$ is generic, the balanced linear equation $(\myvec{b'})$ is a linear combination of the equations in \mytagprime{}.
	By choosing some $r \in [l]$ such that $b_r \neq 0$ and restricting our attention to the variables $\myvec{x_{(r-1)k+1}},\ldots,\myvec{x_{rk}}$ (i.e.{} the $r$-th block in the block matrix representation of $A'$), we see that the equation $\myvec{y_j} = 0$ is a linear combination of the equations in \mytag{}.
\end{proof}

\Cref{cor:intro:application-AP} can be deduced from \cref{cor:intro:application-general} by letting \mytag{} be the linear system that encodes a $k$-term arithmetic progression and setting $l = 2$ and $(c_1,c_2) = (1,-1)$.
We show that \cref{cor:intro:application-AP} does not depend on the full strength of \cref{cor:intro:application-general}, as it follows immediately from \cref{lem:matrix-pigeonhole}.

\begin{proof}[Proof of \cref{cor:intro:application-AP}]
	Let \mytag{} be a linear system which encodes a $k$-term arithmetic progression, for instance the system given by the matrix
	\[ A = \begin{pmatrix}
		1 & -2 & 1 & 0 & 0 & \cdots & 0 & 0 & 0 & 0 & 0 \\
		0 & 1 & -2 & 1 & 0 & \cdots & 0 & 0 & 0 & 0 & 0 \\
		\vdots & \vdots & \vdots & \vdots & \vdots & \ddots & \vdots & \vdots & \vdots & \vdots & \vdots \\
		0 & 0 & 0 & 0 & 0 & \cdots & 0 & 1 & -2 & 1 & 0 \\
		0 & 0 & 0 & 0 & 0 & \cdots & 0 & 0 & 1 & -2 & 1
	\end{pmatrix} \, \in \, \F_p^{(k - 2) \times k}. \]
	Let $S \subseteq \F_p^n$ with $|S| \geq p^{1 + (1 - \frac{1}{k})n}$.
	By \cref{lem:matrix-pigeonhole}, there are $(\myvec{x_1},\ldots,\myvec{x_k}),(\myvec{y_1},\ldots,\myvec{y_k}) \in S^k$ such that $(\myvec{x_1} - \myvec{y_1} , \ldots, \myvec{x_k} - \myvec{y_k})$ is a linearly generic solution of \mytag{}.
	
	Since the standard basis vectors $\myvec{e_1},\ldots,\myvec{e_k} \in \F_q^k$ are not in the row space of $A$,\hair\savefootnote{\hasAPs}{To prove this, it is sufficient to note that there exist non-trivial $k$-APs in $\F_q^n \setminus \{0\}$.} we have $\myvec{x_j} - \myvec{y_j} \neq 0$ for all $j \in [k]$.
	Likewise, since the vectors $\myvec{e_j} - \myvec{e_{j'}}$ ($j \neq j'$) are not in the row space of $A$,\hair\reusefootnotemark{\hasAPs} we have $\myvec{x_j} - \myvec{y_j} \neq \myvec{x_{j'}} - \myvec{y_{j'}}$ whenever $j \neq j'$.
	It follows that $(\myvec{x_1} - \myvec{y_1} , \ldots, \myvec{x_k} - \myvec{y_k})$ is a non-trivial $k$-AP in $(S - S) \setminus \{0\}$.
\end{proof}

\begin{remark}
	The preceding proof carries through unchanged if $A$ is replaced by an arbitrary matrix, and if the difference set $(S - S) \setminus \{0\}$ is replaced by the sum set $c_1 \cdot S + \cdots + c_l \cdot S$ with $c_1 + \cdots + c_l = 0$ (replace $\myvec{x_j} - \myvec{y_j} \in S - S$ by $c_1\myvec{x_j} + (c_2 + \cdots + c_l)\myvec{y_j} \in c_1 \cdot S + \cdots + c_l \cdot S$).
	So a weaker version of \cref{cor:intro:application-general}, where the AIR-sumset is replaced by an ordinary sumset, can also be proved by a simple counting argument, without using the slice rank method.
\end{remark}

\begin{remark}
	Now consider once again the sumset $c_1\cdot S + \cdots + c_l \cdot S$, but this time assume that $c_1 + \cdots + c_l \neq 0$.
	In this case, the techniques from this paper do not say anything non-trivial about the problem of finding a non-trivial $k$-AP in the sum set $c_1 \cdot S + \cdots + c_l \cdot S$.
	
	We explain why the results from this paper do not work when $c_1 + \cdots + c_l \neq 0$.
	It is tempting to try to repeat the proof of \cref{cor:intro:application-general}, but we run into a problem: The column equivalence classes no longer sum to zero, so we have to replace \mycref{thm:intro:temperate}{itm:intro:tmp:z} by \mycref{thm:intro:temperate}{itm:intro:tmp:nz}.
	However, this imposes two extra conditions on the original $m \times k$ matrix in the proof of \cref{cor:intro:application-general}, namely that $A \one = 0$ (i.e. \mytag{} is balanced) and that $k = \rank(A) + 1$.
	So we can only say something for a very specific class of linear systems.
	In fact, this class is so specific that the coefficient matrix must satisfy $\ker(A) = \spn(\one)$, so every solution of the original system must be constant!
	
	Likewise, it is tempting to try to repeat the proof of \cref{cor:intro:application-general}, but this time replacing \mycref{thm:intro:temperate}{itm:intro:tmp:z} by \mycref{thm:intro:moderate}{itm:intro:mod:nz}.
	After all, to find (say) a non-trivial $k$-AP, it is enough to find a solution with $\myvec{y_1},\ldots,\myvec{y_k}$ pairwise distinct instead of a generic solution.
	Here we run into another problem.
	In the proof of \cref{cor:intro:application-general}, we can find a solution $(\myvec{x_1},\ldots,\myvec{x_{lk}}) \in S^{lk}$ of the extended system \mytagprime{} with $\myvec{x_1},\ldots,\myvec{x_{lk}}$ pairwise distinct.
	But when we recombine these to form a solution $(\myvec{y_1},\ldots,\myvec{y_k}) \in (c_1 \cdot S + \cdots + c_l \cdot S)^k$ of the original system \mytag{}, we may end up with $\myvec{y_1} = \cdots = \myvec{y_k}$, since we have no way to avoid these additional equations.
	In fact, if we use the proof of \mycref{thm:intro:moderate}{itm:intro:mod:nz} as an algorithm to find the $\myvec{x_1},\ldots,\myvec{x_{lk}}$, then this is guaranteed to happen: We start with a solution where all variables $\myvec{x_1},\ldots,\myvec{x_{lk}}$ are equal, and then modify the variables in such a way that the contribution to each column equivalence class remains the same, so the equation $\myvec{y_1} = \cdots = \myvec{y_k}$ is maintained throughout the proof.
	Once again, the techniques from this paper are unable to say anything non-trivial.
\end{remark}

\subsection{The systems studied by Mimura and Tokushige}
\label{subsec:examples:Mimura-Tokushige}
In a series of papers \cite{Mimura-Tokushige-star,Mimura-Tokushige-shape,Mimura-Tokushige-II}, Mimura and Tokushige studied several specific (classes of) linear systems, and showed that each of them is moderate.
These were the first results of this type.
We show that our results and techniques furnish alternative proofs for all systems studied by Mimura and Tokushige (though our constants might not be as good).

The systems studied by Mimura and Tokushige have integer entries, and can therefore be interpreted as a linear system over $\F_q$ for an arbitrary prime power $q = p^s$.
Depending on the system, Mimura and Tokushige sometimes had to assume that $p \neq 2$ or $p \neq 3$, and we shall do the same.

\begin{example}
	\label{xmpl:MT:star}
	In \cite{Mimura-Tokushige-star}, Mimura and Tokushige studied a \emph{star of $k$ three-term arithmetic progressions}, given by the linear system $(\mathcal S_{*k})$ with coefficient matrix
	\[ \begin{pmatrix}
		1 & 1 & 0 & 0 & \cdots & 0 & 0 & -2 \\
		0 & 0 & 1 & 1 & \cdots & 0 & 0 & -2 \\
		\vdots & \vdots & \vdots & \vdots & \ddots & \vdots & \vdots & \vdots \\
		0 & 0 & 0 & 0 & \cdots & 1 & 1 & -2
	\end{pmatrix} \, \in \, \F_q^{k \times (2k + 1)}, \]
	and proved that this system is moderate whenever $p \geq 3$.
	
	This result can be recovered as a special case of \cref{thm:intro:moderate}, and strengthened to $(\mathcal S_{*k})$ being temperate by \cref{thm:intro:temperate}.
	Indeed, $(\mathcal S_{*k})$ is a type (RC) linear system, as it is balanced and there is only one column equivalence class of size $1$.
	If $p \neq 2$, then the system is non-degenerate and irreducible, and all column equivalence classes have sum $\pm 2 \neq 0$, so it follows from \mycref{thm:intro:moderate}{itm:intro:mod:nz} that $(\mathcal S_{*k})$ is moderate.
	Additionally, since there are $k$ equations and $k + 1$ column equivalence classes, it follows from \mycref{thm:intro:temperate}{itm:intro:tmp:nz} that $(\mathcal S_{*k})$ is temperate.
\end{example}

\begin{example}
	\label{xmpl:MT:fan}
	Also in \cite{Mimura-Tokushige-star}, Mimura and Tokushige point out that their proof also extends to a `fan' of $k$ three-term arithmetic progressions, given by the linear system $(\mathcal S_{*k}')$ with coefficient matrix
	\[ \begin{pmatrix}
		1 & -2 & 0 & 0 & \cdots & 0 & 0 & 1 \\
		0 &  0 & 1 & -2 & \cdots & 0 & 0 & 1 \\
		\vdots & \vdots & \vdots & \vdots & \ddots & \vdots & \vdots & \vdots \\
		0 &  0 & 0 & 0 & \cdots & 1 & -2 & 1
	\end{pmatrix} \, \in \, \F_q^{k \times (2k + 1)}. \]
	Analogously to \cref{xmpl:MT:star}, it follows from \mycref{thm:intro:moderate}{itm:intro:mod:nz} and \mycref{thm:intro:temperate}{itm:intro:tmp:nz} that $(\mathcal S_{*k}')$ is moderate and temperate, provided that $p \neq 2$.
\end{example}

\begin{example}
	\label{xmpl:MT:W}
	In \cite{Mimura-Tokushige-shape}, Mimura and Tokushige studied the problem of avoiding a `W shape', and showed that the linear system $(\mathcal W)$ with coefficient matrix
	\[ \begin{pmatrix}
		1 & -1 & -1 & 1 & 0 \\
		1 & 0 & -2 & 0 & 1
	\end{pmatrix} \, \in \, \F_q^{2 \times 5} \]
	is moderate whenever $p \geq 3$.
	
	This is not a type (RC) linear system, since there are $3$ column equivalence classes of size $1$, so this result cannot be recovered as a special case of \cref{thm:intro:moderate} or \cref{thm:intro:temperate}.
	
	Nevertheless, our techniques from \mysecref{sec:mod:nz} can be adapted to recover this result as well.
	Indeed, let $\Gamma_q$ be the constant from \cref{thm:tricoloured}, and let $S \subseteq \F_q^n$ with $|S| \geq 4\cdot (\Gamma_q)^n$.
	By repeatedly finding a non-trivial $3$-AP and removing it from $S$, we can find a list $\{(\myvec{x_1^{(i)}},\myvec{x_3^{(i)}},\myvec{x_5^{(i)}})\}_{i=1}^L$ of $L \geq (\Gamma_q)^n$ pairwise disjoint non-trivial $3$-APs in $S^3$.
	For all $i \in [L]$, set $\myvec{x_2^{(i)}} = \myvec{x_3^{(i)}}$ and $\myvec{x_4^{(i)}} = \myvec{x_5^{(i)}}$, so that $(\myvec{x_1^{(i)}},\myvec{x_2^{(i)}},\myvec{x_3^{(i)}},\myvec{x_4^{(i)}},\myvec{x_5^{(i)}}) \in S^5$ is a solution of $(\mathcal W)$.
	Since $2$ and $4$ belong to the same column equivalence class, it follows from \cref{cor:single-replacement} that there are $i \neq i',i''$ such that the $5$-tuple $(\myvec{y_1},\myvec{y_2},\myvec{y_3},\myvec{y_4},\myvec{y_5}) = (\myvec{x_1^{(i)}},\myvec{x_2^{(i')}},\myvec{x_3^{(i)}},\myvec{x_4^{(i'')}},\myvec{x_5^{(i)}}) \in S^5$ is also a solution of $(\mathcal W)$.
	Then $\myvec{y_1},\myvec{y_3},\myvec{y_5}$ are pairwise distinct because they stem from the same non-trivial $3$-AP, and $\{\myvec{y_1},\myvec{y_3},\myvec{y_5}\} \cap \{\myvec{y_2},\myvec{y_4}\} = \varnothing$ because they stem from disjoint solutions.
	Finally, note that $\myvec{y_2} \neq \myvec{y_4}$, for otherwise the first equation of $(\mathcal W)$ would imply that $\myvec{y_1} = \myvec{y_3}$.
	This shows that $(\mathcal W)$ is moderate.
	
	With minor modifications, the preceding argument also shows that $(\mathcal W)$ is temperate.
	Indeed, by repeating the argument, but using multiple replacement (\cref{cor:multiple-replacement}) instead of single replacement (\cref{cor:single-replacement}), we can make sure that $\myvec{x_2^{(i')}}$ is not in the line through $\myvec{x_1^{(i)}}$, $\myvec{x_3^{(i)}}$ and $\myvec{x_5^{(i)}}$.
	Then $\dim(\aff(\myvec{x_1^{(i)}},\myvec{x_2^{(i')}},\myvec{x_3^{(i)}},\myvec{x_4^{(i'')}},\myvec{x_5^{(i)}})) \geq 2$, so it follows from \cref{cor:affine-rank-nullity} that this solution is generic.
\end{example}

\begin{example}
	\label{xmpl:MT:T}
	In \cite{Mimura-Tokushige-II}, Mimura and Tokushige studied the system $(T)$ with coefficient matrix
	\[ \begin{pmatrix}
		1 & -2 & 1 & 0 & 0 \\
		0 & 0 & -2 & 1 & 1
	\end{pmatrix} \, \in \, \F_q^{2 \times 5}, \]
	and proved that it is moderate whenever $p \geq 3$.
	
	Once again, this result can be recovered as a special case of \mycref{thm:intro:moderate}{itm:intro:mod:nz}, and strengthened to $(T)$ being temperate by \mycref{thm:intro:temperate}{itm:intro:tmp:nz}.
\end{example}

\begin{example}
	\label{xmpl:MT:lSk+2}
	In \cite{Mimura-Tokushige-II}, Mimura and Tokushige studied the class of linear systems $(l S_{k+2})$.
	This class is defined as follows: let $k \geq 1$, and let $a_1,\ldots,a_{k+2} \in \F_q$ be non-zero such that $a_1 + \cdots + a_{k+2} = 0$.
	Then $(l S_{k+2})$ is given by the coefficient matrix
	\[ \begin{pmatrix}
		a_1 & \cdots & a_k & a_{k+1} & a_{k+2} & 0 & 0 & \cdots & 0 & 0 \\
		a_1 & \cdots & a_k & 0 & 0 & a_{k+1} & a_{k+2} & \cdots & 0 & 0 \\
		\vdots & \ddots & \vdots & \vdots & \vdots & \vdots & \vdots & \ddots & \vdots & \vdots \\
		a_1 & \cdots & a_k & 0 & 0 & 0 & 0 & \cdots & a_{k+1} & a_{k+2}
	\end{pmatrix} \, \in \, \F_q^{l \times (k + 2l)}. \]
	In \cite[Thm.{} 5]{Mimura-Tokushige-II}, Mimura and Tokushige showed that such a system is always moderate.
	(This contains the linear system $(S_1)$ from \cite{Mimura-Tokushige-II} as a special case.)
	
	This result can be recovered as a special case of \cref{thm:intro:moderate}, and strengthened to $(l S_{k+2})$ being temperate by \cref{thm:intro:temperate}.
	Indeed, $(l S_{k+2})$ is balanced, and it has one column equivalence class of size $k \geq 1$ and $l$ column equivalence classes of size $2$, so it is a type (RC) linear system.
	Furthermore, the system is non-degenerate and irreducible.
	Note that, if one column equivalence class sums to zero, then all column equivalence classes must sum to zero, so it follows from either \mycref{thm:intro:moderate}{itm:intro:mod:nz} or \mycref{thm:intro:moderate}{itm:intro:mod:z} that $(l S_{k+2})$ is moderate.
	Furthermore, since the number of equations is $l$ and the number of column equivalence classes is $l + 1$, it follows from either \mycref{thm:intro:temperate}{itm:intro:tmp:nz} or \mycref{thm:intro:temperate}{itm:intro:tmp:z} that $(l S_{k+2})$ is temperate.
\end{example}

\begin{example}
	\label{xmpl:MT:2Tkl}
	In \cite{Mimura-Tokushige-II}, Mimura and Tokushige studied the class of linear systems $(2T_{k,l})$.
	This class is defined as follows: let $k \geq 1$ and $l \geq 2$, and let $a_1,\ldots,a_{k+l} \in \F_q$ be non-zero such that $a_1 + \cdots + a_{k+l} = 0$.
	Then $(2T_{k,l})$ is given by the coefficient matrix
	\[ \begin{pmatrix}
		a_1 & \cdots & a_k & a_{k+1} & \cdots & a_{k+l} & 0 & \cdots & 0 \\
		a_1 & \cdots & a_k & 0 & \cdots & 0 & a_{k+1} & \cdots & a_{k+l}
	\end{pmatrix} \, \in \, \F_q^{2 \times (k + 2l)}. \]
	In \cite[Thm.{} 6]{Mimura-Tokushige-II}, Mimura and Tokushige showed that such a system is always moderate.
	(This contains the linear system $(S_2)$ from \cite{Mimura-Tokushige-II} as a special case.)
	
	This result can be recovered as a special case of \cref{thm:intro:moderate}, and strengthened to $(2T_{k,l})$ being temperate by \cref{thm:intro:temperate}.
	The argument is analogous to that of \cref{xmpl:MT:lSk+2}.
\end{example}

\begin{example}
	\label{xmpl:MT:S3-}
	In \cite{Mimura-Tokushige-II}, Mimura and Tokushige studied the linear system $(S_3^-)$ with coefficient matrix
	\[ \begin{pmatrix}
		1 & 1 & 1 & 1 & -4 & 0 & 0 & 0 & 0 & 0 \\
		1 & 1 & 0 & 0 & 0 & 1 & 1 & -4 & 0 & 0 \\
		1 & 1 & 0 & 0 & 0 & 1 & 0 & 0 & 1 & -4
	\end{pmatrix} \, \in \, \F_q^{3 \times 10}, \]
	and proved that it is moderate whenever $p \neq 2$.\hair\footnote{The authors don't make the assumption $p \neq 2$ explicit in their proof. This assumption is necessary because the sum of the second and third row of the coefficient matrix is congruent to $\begin{pmatrix} 0 & 0 & 0 & 0 & 0 & 0 & 1 & 0 & -1 & 0 \end{pmatrix} \pmod 2$. So for $p = 2$ the system cannot be moderate because it forces two variables to be equal.}
	
	This result can be recovered as a special case of \cref{thm:intro:moderate}, provided that $p \neq 2,3$.\hair\footnote{If $p = 2$, then there are three column equivalence classes of size $1$, so the system is not of type (RC). Furthermore, if $p \in \{2,3\}$, then there are column equivalence classes of size $2$ that sum to zero, but not all column equivalence classes sum to $0$, so neither \mycref{thm:intro:moderate}{itm:intro:mod:nz} nor \mycref{thm:intro:moderate}{itm:intro:mod:z} applies in this case. If $p \notin \{2,3\}$, then the system is of type (RC) and none of column equivalence classes sums to zero, so \mycref{thm:intro:moderate}{itm:intro:mod:nz} applies.}
	The results from this paper are insufficient to determine whether $(S_3^-)$ is temperate, because there are not enough equations to apply \mycref{thm:intro:temperate}{itm:intro:tmp:nz}.
\end{example}

\begin{example}
	\label{xmpl:MT:S3}
	Finally, in \cite[Conjecture 1]{Mimura-Tokushige-II}, Mimura and Tokushige conjectured that the system $(S_3)$ with coefficient matrix
	\[ \begin{pmatrix}
		1 & 1 & 1 & 1 & -4 & 0 & 0 & 0 & 0 & 0 & 0 \\
		1 & 1 & 0 & 0 & 0 & 1 & 1 & -4 & 0 & 0 & 0 \\
		1 & 1 & 0 & 0 & 0 & 0 & 0 & 0 & 1 & 1 & -4
	\end{pmatrix} \, \in \, \F_q^{3 \times 11} \]
	is moderate.
	This is confirmed by our results. If $p \neq 2$, then it follows from \mycref{thm:intro:moderate}{itm:intro:mod:nz} and \mycref{thm:intro:temperate}{itm:intro:tmp:nz} that $(S_3)$ is moderate and temperate.
	If $p = 2$, then some of the columns become zero, so they correspond to free variables. After removing those columns, it follows from \mycref{thm:intro:moderate}{itm:intro:mod:z} and \mycref{thm:intro:temperate}{itm:intro:tmp:z} that $(S_3)$ is moderate and temperate.
\end{example}

In summary: in all examples except \cref{xmpl:MT:S3-}, we were able to prove that the system is moderate and temperate, thereby strengthening prior results (and proving a conjecture) of Mimura and Tokushige.
In \cref{xmpl:MT:S3-}, we gave an alternative proof of the fact that the system is moderate, but we were unable to determine whether the system is also temperate.

In \cref{xmpl:MT:W}, we could not apply \cref{thm:intro:moderate}.
Instead, we needed a proof that was adapted to this particular system, using results from \mysecref{sec:mod:nz}, to furnish an alternative proof that the system is moderate.
In all other examples, the fact that the system is moderate follows immediately from \cref{thm:intro:moderate}.

\small
\paragraph{Acknowledgements}
The first author is partially supported by the Dutch Research Council (NWO), project number 613.009.127.

\bibliographystyle{alphaurl}
\bibliography{repeated-columns}

\end{document}